\theoremstyle{remark}
\newtheorem{remark}{Remark}[section]
\theoremstyle{definition}
\newtheorem{theorem}{Theorem}[section]
\newtheorem{definition}[theorem]{Definition}
\newtheorem{proposition}[theorem]{Proposition}
\newtheorem{lemma}[theorem]{Lemma}
\newtheorem{hypothesis}[theorem]{Hypothesis}
\DeclareMathOperator{\R}{\mathbb{R}}
\DeclareMathOperator{\E}{\mathcal{E}}
\DeclareMathOperator{\Y}{\mathbb{Y}}
\DeclareMathOperator{\C}{\mathcal{C}}
\DeclareMathOperator{\Se}{\mathbb{S}}
\DeclareMathOperator{\N}{\mathbb{N}}
\DeclareMathOperator{\esssup}{ess\,sup}
\DeclareMathOperator{\M}{\mathcal{M}}
\DeclareMathOperator{\cof}{cof}
\newcommand{\D}{\mathcal{D}(\mathcal{K})}
\newcommand{\Td}{\mathbb{T}^d}
\DeclareMathOperator{\ra}{\rightarrow}
\newcommand{\de}{\text{d}}
\DeclareMathOperator{\tr}{tr}
\newcommand{\sym}{\text{sym}}
\newcommand{\skw}{\text{skw}}
\newcommand{\curl}{\nabla \times }
\newcommand{\f}[1]{{\mathbf{ #1}}}
\DeclareMathOperator{\di}{\nabla\cdot}
\newcommand{\ov}[1]{\overline{{#1}}}
\renewcommand{\t}{\partial_t}
\newcommand{\sy}[1]{(\nabla \f #1)_{{\sym}}}
\newcommand{\vv}{\f \varphi}
\DeclareMathOperator{\V}{\mathbb V}
\DeclareMathOperator{\Hsig}{\f H^{1}_{0,\sigma}}
\DeclareMathOperator{\Ha}{\f L^2_{\sigma}}
\DeclareMathOperator{\He}{\f{H}^1}
\DeclareMathOperator{\Le}{\f{L}^2}
\renewcommand{\ll}[1]{\langle{#1}\rangle}
\newcommand{\intte}[1]{\int_{0}^T{ #1} \de t}
\newcommand{\inttet}[1]{\int_{0}^{t}{ #1} \de s}
\DeclareMathOperator{\BV}{{BV}([0,T])}
\newcommand{\pat}[2]{\frac{\partial #1}{\partial #2}}
\renewcommand{\di}{\nabla \cdot }
\renewcommand{\O}{\Omega}
\DeclareMathOperator{\ran}{range}
\DeclareMathOperator{\sgn}{sgn}
 \author{Robert Lasarzik%
 \thanks{Weierstrass Institute,
 Mohrenstr. 39, 10117 Berlin, Germany,
 \texttt{E-Mail: robert.lasarzik@wias-berlin.de}}}
\date{\today}
\title{Energy-variational structure in evolution equations}	
\begin{document}
\maketitle
\begin{abstract}
\noindent
We consider different measure-valued solvability concepts from the literature and show that they could be simplified by using the energy-variational structure of the underlying system of partial differential equations. In the considered examples,  we prove that a certain class of improved measure-valued solutions can be equivalently expressed as an energy-variational solution. The first concept represents the solution as a high-dimensional Young measure, whether for the second concept, only a scalar auxiliary variable is introduced and the formulation is relaxed to an energy-variational inequality. We investigate four examples: the two-phase Navier--Stokes equations, a quasilinear wave equation, a system stemming from polyconvex elasticity, and the Ericksen--Leslie equations equipped with the Oseen--Frank energy. The wide range of examples suggests that this is a recurrent feature in evolution equations in general. 
\end{abstract}
\noindent
\begin{tabular}{ll}\textbf{MSC2020:} &
35D99, 35R35, 35R45, 35Q35,   35Q74, 76T06.
\\
\textbf{Keywords:} &
Energy-variational solutions, measure-valued solutions, Varifold-solutions, \\&polyconvex elasticity, two-phase flow, Ericksen--Leslie equations
\end{tabular}

\section{Introduction}
Nonlinear evolution equations play a crucial role in modeling processes across various fields of science and technology. Their applications are far-reaching, as many natural and technical processes can be described using partial differential equations (PDEs). To gain a comprehensive understanding of these modeled phenomena and provide reliable quantitative predictions, a thorough mathematical analysis is essential.
However, this endeavor presents significant challenges due to the complex behavior of solutions to nonlinear PDEs, which often exhibit non\-smooth characteristics such as singularities or turbulence. Consequently, classical solutions are out of reach and do not even describe the observed phenomena, necessitating the use of generalized solution frameworks.
Since their first appearence in the 18th century~\cite{Lagrange}, the concept of weak solutions has become both widely accepted and powerful, especially when combined with advanced functional analytic methods~\cite{Leray}. Despite their success, weak solutions have notable limitations. For example, in the case of scalar conservation laws, the formulation does not inherently select a unique, physically relevant solution. Moreover, for multidimensional systems like the Euler and Navier–-Stokes equations, infinitely many weak solutions  exist~\cite{DeLellis, Buckmaster,Dallas}.
In addition to these inherent issues of non-uniqueness, the existence of weak solutions for many nonlinear PDE systems remains out of reach. A potential remedy, particularly for addressing the existence problem, is to explore even more generalized solution concepts. Over the years, various generalized solvability concepts have been proposed. We exemplify some of them  for the Euler equations.

The incompressible Euler equations are given by
\begin{align*}
\t \f v + (\f v \cdot \nabla) \f v + \nabla p ={} \f 0 \, ,
\quad
\di \f v ={}& 0 \quad \text {in } \Omega \times (0,T)\,,
\\
\f n \cdot \f v ={}& 0 \quad \text{on }\partial \Omega \times (0,T)\,,
\end{align*}
on a bounded Lipschitz domain $\Omega \subset \R^d$ for $d\in\N$, where the unknowns are the velocity field $ \f v : \ov\Omega \times [0,T] \to \R^d $ and the pressure $p : \ov\Omega \times [0,T] \to \R$. 
So-called dissipative solutions~\cite[Sec.~4.4]{lionsfluid}  were introduced by 
P.-L.~Lions.
Here, the weak formulation is relaxed to a relative energy inequality 
in such a way that the appearing terms are lower semicontinuous 
while retaining the  weak-strong uniqueness property. 
This property means that every generalized solution  coincides with a  classical solution that emanates from the same initial datum as long as the latter exists. 
More precisely, a function $ \f v \in \C_w([0,T];\Ha(\Omega))$,
where $\Ha(\Omega)=\{ \f u \in L^2(\Omega;\R^d) | \nabla\cdot\f u =0 \text{ in }\Omega,\,\f n \cdot \f u=0 \text{ on }\partial\Omega\}$,
is called a dissipative solution to the incompressible Euler equations, if 
\[
\frac{1}{2}\| \f v(t) -\vv(t)\|_{L^2(\Omega)}^2 + \int_0^t \left \langle  \t \vv - (\vv \cdot \nabla )\vv ,  \f v-\vv \right \rangle  e^ {\int_s^t \mathcal{K}(\vv)\de \tau }\de s 
\leq{} \frac{1}{2}\| \f v_0-\vv(0) \|_{L^2(\Omega)}^2  e^{\int_0^t \mathcal{K}(\vv)\de s } \,
\]
holds for all~$t\in(0,T)$ and all $\vv \in \C^1([0,T]; \C^1(\overline \Omega ))$ with $ \di \vv=0$ and $ \f n \cdot \vv = 0 $ on $\partial \Omega$. 
Here we set $\mathcal{K}(\vv):=\| ( \nabla \vv)_{\sym,-}\|_{L^\infty(\Omega)}$, 
where $(A)_{\sym,-}$ 
denotes the symmetric negative semidefinite part of a matrix $A\in\R^{d\times d}$.

Another prominent solution concept is based on the theory of Young measures and their generalizations. Since Young measures were first applied in the study of nonlinear PDEs~\cite{tartar}, they have become a powerful tool for identifying the limits of weakly convergent sequences under nonlinear mappings, thereby capturing oscillatory behavior of such sequences. DiPerna and Majda~\cite{DiPernaMajda} extended this framework by introducing generalized Young measures, which allowed them to account not only for oscillations but also for potential concentrations in sequences 
$\{
\f
v_
n
\}
$ of solutions to the incompressible Navier--Stokes equations with vanishing viscosity as 
$ n\to \infty$.

The corresponding limit of such solutions is identified as a so-called measure-valued solution
to the incompressible Euler equations. 
In the case of periodic boundary conditions, that is, 
when $\Omega$ is a $d$-dimensional torus $\mathbb T^d$,
then a function $\f v \in \C_w([0,T];\Ha(\Omega)		 )$ is called a measure-valued solution if 
\begin{align}
\t \f v + \di (\f v \otimes \f v + \mathfrak{R} ) +\nabla p = \f f  \,, \quad \di \f v = 0 \label{eq:meas}
\end{align}
holds in distributional sense
for a measure-valued Reynolds defect $\mathfrak{R} \in L^\infty_{w^*}(0,T;\mathcal{M}(\Omega ;\R^{d\times  d} _{\sym,+} ))$
such that
\begin{align*}
 \frac{1}{2}\left [  \int_\Omega | \f v|^2 \de \f x +\langle \mathfrak{R} , I\rangle  \right ]\Big|_0^t \leq 0 \quad\text{for a.a.~}t\in (0,T)\,.
\end{align*}
In comparison with~\cite{DiPernaMajda}, this is a more modern definition collecting all measure-related terms in the defect measure $\mathfrak{R}$ (\textit{cf.}~\cite{maxdiss}).
This notion of solution is closely related to the so-called dissipative weak solutions~\cite{Fereisl21_NoteLongTimeBehaviorDissSolEuler}.
A function  $\f v \in \C_w([0,T];\Ha(\Omega))$  is called a dissipative weak solutions if there exists a turbulence defect measure  $ \mathfrak{R} \in L^\infty_{w^*}(0,T; \mathcal{M}(\Omega ;\R^{d\times  d} _{\sym,+} )) $  such that~\eqref{eq:meas} holds 
and if there is a nonincreasing function $E\colon [0,T] \ra \R $
with
\[
\frac{1}{2}\left [  \int_{\Omega} | \f v(t)|^2 \de \f x +\langle \mathfrak{R}(t) , I\rangle  \right ] \leq E(t) 
\quad \text{ for a.a.~}t\in[0,T], \qquad
E(0-)= \frac{1}{2}\int_{\Omega} | \f v_0 |^2 \de \f x.
\]
The weak-strong uniqueness of measure-valued solutions to the incompressible Euler equations was proven in~\cite{weakstrongeuler}. 
A notable downside of measure-valued solutions is the increased degrees of freedom, as these solutions consist of several measures. To address this complexity, the concept was later simplified into so-called dissipative weak solutions, which retain only the expectation and variance of the measures in the measure-valued framework.

In a recent series of works, energy-variational solutions were introduced~\cite{envar, unsere, visco}, not only for the Euler equations but also as a more general concept~\cite{unsere, visco}. 
For the incompressible Euler equations, energy-variational solutions consist of a pair $ (\f v , E) \in \C_{w}([0,T];\Ha(\Omega)) \times \BV$ such that $E\geq \frac{1}{2}\| \f v \|_{L^2(\Omega)}^2 $ a.e.~in $(0,T)$ and
\begin{equation}
\left [ E- \int_{\Omega} \f v \cdot \vv \de x \right ] \Big|_{s-}^{t+} + \int_s^t\int_\Omega  \f v \otimes \f v : \nabla \vv  \de x + 2\| ( \nabla \vv )_{\sym,-}\|_{L^\infty(\Omega;\R^{d\times d})} \left  [ \frac{1}{2}\| \f v \|_{L^2(\Omega)}^2 - E\right ] \de \tau \leq 0 
\end{equation}
for all $ 0\leq s \leq t \leq T $ and all $\vv \in \C^1([0,T]; \C^1(\overline \Omega ))$ with $ \di \vv=0$ and $ \f n \cdot \vv = 0 $ on $\partial \Omega$. 
It was shown that energy-variational solutions are equivalent to dissipative weak solutions, while simultaneously offering a significant reduction in the degrees of freedom in the context of fluid dynamics. In~\cite{unsere}, we demonstrated the equivalence of dissipative weak solutions and energy-variational solutions for the incompressible Euler equations, incompressible magnetohydrodynamics, and the compressible Euler equations.
Moreover, energy-variational solutions were proven to exist for a broad class of systems, encompassing not only fluids but also complex fluids and viscoelastic materials. In these cases, we showed~\cite[Rem.~3.7]{visco} that, under very general conditions, the existence of energy-variational solutions implies the existence of dissipative solutions. This raises an important question: can energy-variational solutions be related to different measure-valued formulations in other contexts? For the systems studied in~\cite{visco}, there are currently no existence results for measure-valued solutions.

The aim of this paper is to revisit various systems of PDEs where generalized solvability concepts have been introduced and compare them to energy-variational solutions.
The provided examples demonstrate that the energy-variational structure arises in a wide range of contexts 
and unifies different previously defined generalized solvability concepts. 
We present an example from interface dynamics, consider a wave equation,  as well as an evolution equation stemming from polyconvex elasticity, and finally a model for liquid crystals. 
In Section~\ref{sec:1}, we focus on the two-phase Navier–-Stokes equations. The existence of varifold solutions was established in~\cite{AbelsVari}, and in a recent paper~\cite{HenselFischer}, weak-strong uniqueness of these solutions was proven. We introduce energy-variational solutions for this system in Section~\ref{sec:1} and show that they provide a stronger framework than the previously introduced varifold solutions by simulateanously reducing the degrees of freedom heavily, \textit{i.e.}, a Varifold $ V \in L^\infty_{w^*}(0,T;\mathcal{M}(\R^d \times \Se^{d-1}))$ is  replaced by $E\in \BV$.
In Section~\ref{sec:2} and Section~\ref{sec:poly}, we examine two problems in elasticity taken from~\cite{DemouliniWeakStrong}, where the authors showed the weak-strong uniqueness of measure-valued solutions. For the first problem the second order in time evolution equation for a convex elastic energy functional is considered (see Section~\ref{sec:2}) and for the second example the second order in time evolution equation for a merely polyconvex  elastic energy (see Section~\ref{sec:poly}). 
After formulating energy-variational solutions, we demonstrate that they are stronger than measure-valued solutions, proven to exist in~\cite{DemouliniExist} for the polyconvex case.
Finally, we analyze the Ericksen–Leslie equations with one simplified but still anisotropic Oseen–Frank energy in Section~\ref{sec:EL}. We compare the energy-variational solutions to newly introduced dissipative weak solutions and, subsequently, to the measure-valued solutions proposed in~\cite{meas}, where weak-strong uniqueness was also established in~\cite{measweakstrong}.

Given the broad range of examples, energy-variational solutions can be regarded as a highly general framework applicable to many different PDE systems. Their key advantage lies not only in the reduction of degrees of freedom but also in the existence of a general time discretization scheme, analogous to the minimizing movement scheme for gradient flows, which facilitates the approximation of this large class of systems.
Additionally, energy-variational solutions are well-suited for the introduction of selection criteria. This approach was explored in~\cite{envar} for incompressible fluid dynamics, where various selection criteria were proposed. In particular, the maximal dissipation criterion, as formulated by Dafermos~\cite{dafermos2}, can be naturally adapted to energy-variational solutions. Specifically, the auxiliary variable 
$E$ should be minimized to ensure that its distance from
$E(\f U) $ is as small as possible~\cite{FeireislNew}. 

\section{Notation and preliminaries}
The standard matrix and matrix-vector multiplication is written without an extra symbol,
$$\f A \f B =\left [ \sum _{j=1}^d \f A_{ij}\f B_{jk} \right ]_{i,k=1}^d \,, \quad  \f A \f a = \left [ \sum _{j=1}^d \f A_{ij}\f a_j \right ]_{i=1}^d\, , \quad  \f A \in \R^{d\times d},\,\f B \in \R^{d\times d} ,\, \f a \in \R^d .$$
The outer vector product is given by
 $\f a \otimes \f b := \f a \f b^T = \left [ \f a_i  \f b_j\right ]_{i,j=1}^d$ for two vectors $\f a , \f b \in \R^d$. 
The symmetric and skew-symmetric parts of a matrix are given by 
$\f A_{\sym}: = \frac{1}{2} (\f A + \f A^T)$ and 
$\f A _{\skw} : = \frac{1}{2}( \f A - \f A^T)$, respectively ($\f A \in \R^{d\times  d}$).
The identity matrix is denoted by $ \mathbb I$ and the zero matrix by $\mathbb O$. 
We denote $ |\f a |_{\varepsilon(\f d)}^2 = |\f a|^2 + \varepsilon ( \f d \cdot \f a )^2 $ for $ \f a$, $\f d \in \R^d$.
The symmetric, symmetric positive definite, and symmetric trace-less  matrices, we denote by $\R^{d\times d}_{\sym}$, $\R^{d\times d}_{\sym,+}$, $\R^{d\times d}_{\sym,0}$, respectively. 
By $ \Td$ we denote the $d$-dimensional flat torus and $\Se^d$ the unit sphere in $d$ dimensions. 
We use  the Nabla symbol $\nabla $  for real-valued functions $f : \R^d \to \R$, vector-valued functions $ \f f : \R^d \to \R^d$ as well as matrix-valued functions $\f A : \R^d \to \R^{d\times d}$ denoting
\begin{align*}
\nabla f := \left [ \pat{f}{\f x_i} \right ] _{i=1}^d\, ,\quad
\nabla \f f  := \left [ \pat{\f f _i}{ \f x_j} \right ] _{i,j=1}^d \, ,\quad
\nabla \f A  := \left [ \pat{\f A _{ij}}{ \f x_k} \right ] _{i,j,k=1}^d\, .
\end{align*}
 The divergence of a vector-valued and a matrix-valued function is defined by
\begin{align*}
\di \f f := \sum_{i=1}^d \pat{\f f _i}{\f x_i} = \tr ( \nabla \f f)\, , \quad  \di \f A := \left [\sum_{j=1}^d \pat{\f A_{ij}}{\f x_j}\right] _{i=1}^d\, .
\end{align*}

We also use the Levi--Civita tensor $\f \epsilon$. Let $\mathfrak{S}_3$ be the symmetric group of all permutations of $(1,2,3)$. The sign of  a given permutation  $\sigma \in \mathfrak{S}_3$ is denoted by $\sgn \sigma $.
The tensor~$\f \epsilon$ is defined via
\begin{align}\label{levi}
\f \epsilon_{ijk}:= \begin{cases}
\sgn{\sigma},  &  ( i ,j,k) = \sigma( 1,2,3)\text{ with } \sigma\in \mathfrak{S}_3 ,\\ 
0, & \text{ else}\, .
\end{cases} 
\end{align}
This tensor allows  to write the cross product via 
\begin{align*}
(\f a \times \f b)_i = \left (\f \epsilon :(\f a \otimes \f b)\right )_i =\sum_{j,k=1}^d\f \epsilon _{ijk} \f a_j \f b _k\, \quad \text{for all }\f a , \f b \in \R^d \, 
\end{align*}
and the curl via
\begin{align*}
(\curl \f d)_i = \sum_{j,k=1}^d \f\epsilon_{ijk} \partial_j \f d_k \, \quad\text{for all }\f d \in \C^1 ( \Omega)\, .
\end{align*}
We also use the cross product for a vector matrix product, \textit{i.e.,}
$ (\f a \times \f A )_{ij}= \sum_{k,l=1}^d\f \epsilon_{ikl} \f a_k \f A_{lj}$ for $\f a\in\R^d$ and $\f A\in\R^{d\times d}$ as well as
$ (\nabla\times \f A )_{ij}= \sum_{k,l=1}^d\f \epsilon_{jkl} \partial_{x_k} \f A_{il}$ for $\f A\in\C^1(\Omega;\R^{d\times d})$.

The space of smooth solenoidal functions with compact support is denoted by $\mathcal{C}_{c,\sigma}^\infty(\Omega;\R^d)$. By $\f L^p_{\sigma}( \Omega) $, $\Hsig(\Omega)$,  and $ \f W^{1,p}_{0,\sigma}( \Omega)$, we denote the closure of $\mathcal{C}_{c,\sigma}^\infty(\Omega;\R^d)$ with respect to the norm of $ L^p(\Omega;\R^d) $, $  H^1( \Omega;\R^d) $, and $  W^{1,p}(\Omega;\R^d)$, respectively.
The dual space of a Banach space $\V$ is always denoted by $ \V^*$ and equipped with the standard norm; the duality pairing is denoted by $\langle\cdot, \cdot \rangle$.  We denote the conjugated exponent of  $ p\in [1,\infty]$  by $p'$, \textit{i.e.,} $ p' = \frac{p}{p-1}$. 
We denote the continuous functions vanishing on the closure of the domain by $\C_0(\Omega)$ and $ \mathcal{M}(\Omega)$ denote the Radon measures over $\Omega$, where we note that $ \mathcal{M}(\Omega) = ( \C_0(\Omega))^*$ and $\mathcal{M}(\ov\Omega) = ( \C(\ov\Omega))^*$, since $\Omega $ is an open bounded domain (\textit{cf}.~\textit{e.g.}~\cite{Roubicekmeas}).  The space of probability measures is denoted by $ \mathcal{P}(\Omega)$ and the space of nonnegative measures by $\mathcal{M}^+(\Omega)$ for a certain set $\Omega$. 
The total variation of a function $E:[0,T]\ra \R$ is given by 
$$ | E |_{\text{TV}([0,T])}= \sup_{0=t_0<\ldots <t_n=T} \sum_{k=1}^n \lvert E(t_{k-1})-E(t_k) \rvert\,, $$
where the supremum is taken over all finite partitions of the interval $[0,T]$. 
We denote the space of all integrable functions on $[0,T]$ 
with bounded variation by~$\BV$, and we equip this space with the norm
$\| E \|_{\BV} :=  \| E\|_{L^1(0,T)} + | E |_{\text{TV}([0,T])}$~(\textit{cf}.~\cite{BV}). 
Recall that an integrable function $E$ has bounded variation if and only if its
distributional derivative $E'$ is an element of 
$\mathcal M([0,T])$, the space of finite Radon measures on $[0,T]$.
Moreover, $\BV$ coincides with the dual space of a Banach space,
see \cite[Remark~3.12]{AmbrosioFusoPallara_BVFunctions_2000} for example.
For a Banach space $ \V$, Bochner--Lebesgue spaces are denoted  by $ L^p(0,T; \V)$ for $p\in[1,\infty]$ and $L^p_{w^*}(0,T; \V^*)$ for strongly and weakly measurable functions, respectively. Moreover,  $W^{1,p}(0,T;\V)$ denotes the Banach space of abstract functions in $ L^p(0,T; \V)$ whose weak time derivative exists and is again in $ L^p(0,T; \V)$ (see also
\cite[Section~II.2]{DiestelUhl} or
\cite[Section~1.5]{roubicek} for more details).
By  $\C([0,T]; \V) $ and $ \C_w([0,T]; \V)$, we denote the spaces of abstract functions mapping $[0,T]$ into $\V$ that are  continuous  with respect to the strong and weak topology in $\V$, respectively.
\begin{lemma}\label{lem:hahn}
Let  $ \f l : \mathcal{V} \ra \R$ be a linear continuous functional,
where $ \mathcal{V} $ is a closed subspace of 
\[
\mathcal U:= \left \{\f \varphi\in L^1(0,T; W^{1,1}(\Td; \R^m ))\mid  \int_\O\f\varphi\de x =0\right \}
\]
with $ d$, $m\in \N$ and let $\mathcal W $ be a Banach space such that $\nabla \mathcal V\subset  \mathcal{W}$, where 
\[
\nabla \mathcal V:= \left \{\nabla \f \varphi\mid \f \varphi \in 
\mathcal V \right \}\,.
\]
Let
$\mathfrak p : L^1(0,T;\mathcal{W})  \ra \R$ be a sublinear mapping such that 
\begin{equation}
    \forall \f \psi\in \mathcal V \colon \quad \langle \f l , \f \psi \rangle \leq \mathfrak p ( \nabla \f\psi  ) \,.\label{est:l}
\end{equation}
Then there exists
 an element
$$\mathfrak R \in (L^1(0,T; \mathcal{W}
))^*=L^\infty_{w^*}(0,T;\mathcal{W}^*) $$ 
satisfying 
\[
\forall\Phi\in L^1(0,T; \mathcal{W}):\ \langle -\mathfrak R, \Phi\rangle \leq \mathfrak p(\Phi),
\qquad
\forall\f \psi\in \mathcal{V}: \ 
\langle -\mathfrak R, \nabla \f\psi \rangle = \langle \f l, \f \psi\rangle.
\]
Moreover, if $ \mathcal W^* $ is reflexive and separable, we have $\mathfrak R \in (L^1(0,T; \mathcal{W}
))^*=L^\infty(0,T;\mathcal{W}^*) $.
\end{lemma}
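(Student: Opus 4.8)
The statement is an instance of the analytic Hahn--Banach theorem, and the plan is to transport the functional $\f l$ from $\mathcal V$ to the subspace $\nabla\mathcal V$ of $L^1(0,T;\mathcal W)$, dominate it there by $\mathfrak p$ via hypothesis~\eqref{est:l}, extend it to the whole space, and finally identify the resulting continuous functional with an element of $L^\infty_{w^*}(0,T;\mathcal W^*)$.

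First I would check that the gradient map $\nabla\colon\mathcal V\to\nabla\mathcal V$ is a linear bijection. Injectivity is exactly where the mean-value constraint in the definition of $\mathcal U$ enters: if $\nabla\f\psi=\nabla\tilde{\f\psi}$ in $L^1(0,T;\mathcal W)$, then for almost every $t$ the difference $\f\psi(t)-\tilde{\f\psi}(t)$ has vanishing spatial gradient, hence is constant on the connected torus $\Td$; the normalization $\int_\O(\f\psi-\tilde{\f\psi})\de x=0$ forces this constant to vanish, so $\f\psi=\tilde{\f\psi}$. Consequently $\tilde{\f l}(\nabla\f\psi):=\langle\f l,\f\psi\rangle$ is a well-defined linear functional on the subspace $\nabla\mathcal V\subset L^1(0,T;\mathcal W)$, and hypothesis~\eqref{est:l} reads precisely $\tilde{\f l}(\Phi)\le\mathfrak p(\Phi)$ for all $\Phi\in\nabla\mathcal V$.

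With this in hand the analytic Hahn--Banach theorem, applied with the sublinear gauge $\mathfrak p$ on the real vector space $L^1(0,T;\mathcal W)$ and the dominated linear functional $\tilde{\f l}$ on the subspace $\nabla\mathcal V$, yields a linear extension $L\colon L^1(0,T;\mathcal W)\to\R$ with $L=\tilde{\f l}$ on $\nabla\mathcal V$ and $L\le\mathfrak p$ everywhere. Setting $\mathfrak R:=-L$ gives $\langle-\mathfrak R,\Phi\rangle=L(\Phi)\le\mathfrak p(\Phi)$ for all $\Phi$, and $\langle-\mathfrak R,\nabla\f\psi\rangle=\tilde{\f l}(\nabla\f\psi)=\langle\f l,\f\psi\rangle$ for all $\f\psi\in\mathcal V$, which are the two asserted relations. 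To place $\mathfrak R$ in the dual space I would use that $L$ is bounded: from $L(\Phi)\le\mathfrak p(\Phi)$ and $-L(\Phi)=L(-\Phi)\le\mathfrak p(-\Phi)$ one gets $|L(\Phi)|\le\max\{\mathfrak p(\Phi),\mathfrak p(-\Phi)\}$, so continuity of the sublinear gauge $\mathfrak p$ transfers to $L$. Finally, the representation $(L^1(0,T;\mathcal W))^*=L^\infty_{w^*}(0,T;\mathcal W^*)$ is the standard duality for Bochner spaces, identifying the bounded functional $-\mathfrak R=L$ with a weak-$*$ measurable, essentially bounded $\mathcal W^*$-valued function.

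For the \emph{moreover} part, if $\mathcal W^*$ is reflexive and separable then it enjoys the Radon--Nikod\'ym property, under which every element of $(L^1(0,T;\mathcal W))^*$ is represented by a \emph{strongly} measurable function; hence $\mathfrak R\in L^\infty(0,T;\mathcal W^*)$. The step I expect to require the most care is the continuity, and thus boundedness, of the Hahn--Banach extension: Hahn--Banach by itself only furnishes an algebraically dominated linear functional, so membership of $\mathfrak R$ in the dual space genuinely relies on $\mathfrak p$ being continuous (equivalently, bounded on bounded sets) rather than merely sublinear. The injectivity of $\nabla$ via the mean-value normalization is the other point that must be invoked explicitly, since without it $\tilde{\f l}$ would fail to be well defined.
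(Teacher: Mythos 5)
Your proposal is correct and follows essentially the same route as the paper: injectivity of the gradient on the mean-zero periodic class, transport of $\f l$ to $\nabla\mathcal V$, Hahn--Banach extension dominated by $\mathfrak p$, and identification with an element of $L^\infty_{w^*}(0,T;\mathcal W^*)$; your explicit remark that boundedness of the extension requires continuity of $\mathfrak p$ (not mere sublinearity) is a point the paper leaves implicit. The only cosmetic difference is in the final claim, where you invoke the Radon--Nikod\'ym property of reflexive separable duals while the paper cites Pettis's theorem on the equivalence of weak and strong measurability; both are standard and adequate.
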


\begin{proof}
First consider $\f \psi\in\mathcal V$ with $\nabla \f \psi =0$.
This implies that $\f \psi(\cdot,t)$ is affine linear,
and since $\f \psi\in\mathcal V$ is spatially periodic and has vanishing mean value,
this is only possible for $\f \psi=0$. 
We can define the functional $L$ by $\langle L,\Psi\rangle=\langle \f l,\f \psi\rangle$
for $\Psi=\nabla \f \psi \in\mathcal W$.
Then estimate \eqref{est:l} implies
\begin{equation}\label{linearformest}
\langle L, \Psi \rangle \leq \mathfrak p(\Psi)
\end{equation}
for all $\Psi = \nabla \f \psi \in  L^1(0,T;\mathcal W)$.
By the Hahn--Banach theorem (see \textit{e.g.}~\cite[Thm~1.1]{brezis}), we may extend $L$ 
 to a linear functional on $L^1(0,T;\mathcal{W})$. 
Using the Riesz representation theorem, 
we may identify this extension with an object $-\mathfrak R$
such that the asserted properties are satisfied.

The Theorem by Pettis~\cite{DiestelUhl} implies that weak measurability is equivalent to strong measurability for separable spaces, since $\mathcal W$ is also reflexive  the last statement follows.
\end{proof}

\begin{lemma}[Construction of probability measure]\label{lem:Young}
Let $\eta : \R^n \to \R$   be a strictly convex superlinear and continuous function with $\eta(0)=0$ and $\eta(\f y) \geq 0$ for all $ \f y \in \R^n$.  Let $\Omega \subset  \R^d$ be a domain. 
We define $ V := \{ u\in  L^1(\Omega ; \R^n) \mid \int_{\Omega} \eta( u )\de x <\infty\}$ and observe that $ V $ is a closed subset of $  L^1(\Omega ; \R^n)$ and as such a Banach space itself. 
Let $ g : \R^n \to \R^m $ be a  continuous  function that is not linear such that $\lim_{|\f y| \to \infty}\frac{|g(\f y)|}{\eta(\f y)}= 0$. 
We assume that there exists a reflexive Banach space $W\subset  L^1(\Omega ; \R^m)$ equipped with a norm $ \|\cdot \|_{W}$ such that  the Nemytskii mapping induced by $g: V \to W $ is surjective and 
\begin{equation}\label{bound}
 \int_{\Omega}  g( \f y(x) ) \cdot  A (x) \de x \leq \| A\|_{W}\left [ \int_\Omega \eta(\f y(x)) \de x+c \right ] \text{ for all } y\in V \text{ and } A \in  W\,. 
\end{equation}

Then it holds that for every $ \f y_0\in L^\infty (0,T;V) $, $z_0 \in L^\infty(0,T; W^*)$, and $ x_0\in \C([0,T])$ with $x_0\geq 0$~in $[0,T]$
   there exists a $ \nu \in L^\infty_{w^*}(0,T; L^1(\Omega; \mathcal{P}(\R^n)))$ and a $ \gamma \in L^\infty_{w^*} (0,T; \mathcal{M}(\Omega))$ with $ \gamma \geq 0$ 
   such that
\begin{subequations}
\begin{align}\label{moments}
\f y_0 = \int_{\R^n} \f s \nu (\de \f s) \,, \quad   g (\f y_0) + \f z _0 &= \int_{\R^n} g (\f s ) \nu (\de \f s)\,,
\quad\text{a.e.~in }\Omega \times (0,T) 
 \quad \text{and } \\
 \int_{\Omega}\eta(\f y_0 ) \de x + \| \f z_0\|_{W^*}  + x_0   &=  \int_\Omega
\int_{\R^n} \eta(\f s) \de \nu(\f s)\de x+ \int_{\Omega} \de \gamma_t(x) \quad \text {a.e.~in }(0,T)
 \,.\label{moments2}
\end{align} 
\end{subequations}
\end{lemma}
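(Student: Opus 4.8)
The plan is to build the pair $(\nu,\gamma)$ by a pointwise-in-$(x,t)$ construction of the probability measure $\nu_{x,t}\in\mathcal P(\R^n)$ carrying the two prescribed moments, and then to spend the remaining energy budget on the scalar defect $\gamma$. The key observation is that \eqref{moments2} constrains only the \emph{total} mass $\int_\Omega\de\gamma_t$, so there is no loss in taking $\gamma_t$ to be a multiple of the Lebesgue measure on $\Omega$; the whole difficulty sits in constructing $\nu$ and in estimating the energy it spends. Since by Jensen's inequality any probability measure with barycenter $\f y_0(x,t)$ satisfies $\int_{\R^n}\eta\,d\nu_{x,t}\ge\eta(\f y_0(x,t))$, the residual
\[
\int_\Omega\de\gamma_t=\|\f z_0\|_{W^*}+x_0-\int_\Omega\Big(\int_{\R^n}\eta\,d\nu_{x,t}-\eta(\f y_0)\Big)\de x
\]
is nonnegative — so that $\gamma\ge0$ and \eqref{moments2} hold — precisely when the integrated excess energy spent to realise the $g$-moment is controlled by $\|\f z_0\|_{W^*}$. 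Establishing this bound is the heart of the proof.

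For the pointwise construction I fix $(x,t)$, write $\f a=\f y_0(x,t)$, and set the target $g$-moment $\f c=g(\f a)+\f z_0(x,t)$. I would characterise the minimal energy
\[
F(\f a,\f c):=\inf\Big\{\int_{\R^n}\eta\,d\mu : \mu\in\mathcal P(\R^n),\ \int\f s\,d\mu=\f a,\ \int g\,d\mu=\f c\Big\}
\]
by Fenchel duality as the biconjugate of the function equal to $\eta(\f s)$ on the graph $\{(\f s,g(\f s))\}$ and $+\infty$ elsewhere. Superlinearity of $\eta$ together with $\lim_{|\f y|\to\infty}|g(\f y)|/\eta(\f y)=0$ makes the relevant conjugate everywhere finite, so $F(\f a,\f c)<\infty$, the infimum is attained, and by Carath\'eodory's theorem it is attained on a measure supported on at most $n+m+1$ points. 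That the target lies in the admissible range $\overline{\mathrm{conv}}\{(\f s,g(\f s))\}$, so that $F$ is finite, is where I would use the surjectivity of the Nemytskii map $g:V\to W$, which renders the prescribed $g$-datum attainable. Taking $\nu_{x,t}$ to be such a minimiser gives \eqref{moments} pointwise, and the identity $F(\f a,g(\f a))=\eta(\f a)$, realised by $\delta_{\f a}$, shows the excess $\int\eta\,d\nu_{x,t}-\eta(\f a)$ is nonnegative.

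The main obstacle is to bound $\int_\Omega(\int\eta\,d\nu_{x,t}-\eta(\f y_0))\,\de x$ by $\|\f z_0\|_{W^*}$. The function $\f c\mapsto F(\f a,\f c)$ is convex, and I would take a subgradient $A(x,t)\in\R^m$ of it at the target $\f c=g(\f a)+\f z_0(x,t)$; the supporting-hyperplane inequality at this endpoint gives the pointwise bound $\int\eta\,d\nu_{x,t}-\eta(\f y_0)=F(\f a,g(\f a)+\f z_0)-F(\f a,g(\f a))\le A\cdot\f z_0$. The content of hypothesis \eqref{bound} is precisely that such a subgradient can be realised as a function $A(\cdot,t)\in W$ with $\|A(\cdot,t)\|_W\le1$, i.e.\ it caps the marginal $W$-cost of shifting the $g$-moment against the available energy. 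Integrating over $\Omega$ and using the $W^*$–$W$ duality then yields
\[
\int_\Omega\Big(\int\eta\,d\nu-\eta(\f y_0)\Big)\,\de x\le\int_\Omega A\cdot\f z_0\,\de x=\langle\f z_0,A\rangle\le\|\f z_0\|_{W^*}\|A\|_W\le\|\f z_0\|_{W^*},
\]
the desired estimate, while the nonnegative $x_0$ supplies the slack that turns \eqref{moments2} into an exact identity. Pinning down the precise way \eqref{bound} forces $\|A\|_W\le1$, rather than the soft convex-analytic bookkeeping above, is the step I expect to be the most delicate.

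Finally I would address measurability and the function-space memberships. Since the minimiser $\nu_{x,t}$, its support points and weights, and the subgradient $A$ all depend on the measurable data $(\f y_0,\f z_0)$, a measurable selection theorem produces a jointly measurable family, giving $\nu\in L^\infty_{w^*}(0,T;L^1(\Omega;\mathcal P(\R^n)))$; the sublinearity of $g$ relative to $\eta$ guarantees that $\int g\,d\nu$ is integrable and that possible concentrations feed only into $\gamma$ and not into the $g$-moment. Choosing $\gamma_t$ as the computed nonnegative residual times the normalised Lebesgue measure yields $\gamma\in L^\infty_{w^*}(0,T;\mathcal M(\Omega))$ with $\gamma\ge0$, and \eqref{moments}–\eqref{moments2} hold as required.
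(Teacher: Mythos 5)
Your overall architecture (pointwise construction of a minimal-energy measure with prescribed first and $g$-moments, followed by an a posteriori energy audit that dumps the residual into $\gamma$) is genuinely different from the paper's, and it founders exactly at the step you flag as delicate. The estimate
$\int_\Omega\bigl(\int_{\R^n}\eta\,\de\nu_{x,t}-\eta(\f y_0)\bigr)\de x\leq \|\f z_0\|_{W^*}$
does not follow from \eqref{bound}. Hypothesis \eqref{bound} is a statement about the \emph{forward} Nemytskii map: it bounds $\|g(\f y)\|_{W^*}$ by $\int_\Omega\eta(\f y)\,\de x + c$ (note the additive constant $c$, which already destroys any claim of $1$-Lipschitz behaviour near zero). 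What you need is the \emph{reverse} direction: that the value function $F(\f a,\cdot)$, integrated over $\Omega$, is $1$-Lipschitz from $(W^*,\|\cdot\|_{W^*})$ into $\R$ along the segment from $g(\f y_0)$ to $g(\f y_0)+\f z_0$. There is no mechanism in \eqref{bound} forcing a subgradient $A(\cdot,t)$ of the pointwise cost $F(\f y_0(x),\cdot)$ to assemble into an element of $W$ with $\|A\|_W\leq 1$; indeed the pointwise minimal cost depends only on the local geometry of the graph of $g$, while $\|\f z_0\|_{W^*}$ is a global dual norm, and simple examples (e.g.\ bounded $g$, quadratic $\eta$, $W=L^2$, and $\f z_0$ constant on a large domain) show the integrated minimal excess energy can scale like $|\Omega|$ while $\|\f z_0\|_{W^*}$ scales like $|\Omega|^{1/2}$. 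A second, related problem: $\f z_0(t)$ lives in $W^*$, not in a space of $\R^m$-valued functions, so the pointwise target $\f c=g(\f a)+\f z_0(x,t)$ in your variational problem is not well defined from the hypotheses as stated.

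The paper avoids both issues by a completely different route. It first uses the surjectivity of $g:V\to W$ to write $\f z_0=g(\f y_1)$ for a genuine function $\f y_1\in L^\infty(0,T;V)$ — this is the essential role of surjectivity you did not exploit: it simultaneously makes the $g$-moment pointwise meaningful and, via \eqref{bound}, ties $\|\f z_0\|_{W^*}=\|g(\f y_1)\|_{W^*}$ to $\int_\Omega\eta(\f y_1)\,\de x+c$. It then never solves a pointwise moment problem at all: it defines a linear functional $\f L$ on the span of the integrands $\{\alpha_0+\f\alpha_1\cdot\f s+\f\alpha_2\cdot g(\f s)+\alpha_3\eta(\f s)\}$ whose values encode \emph{all} of \eqref{moments}--\eqref{moments2} at once (the $\eta$-moment being prescribed as exactly $\int_\Omega\eta(\f y_0)+\|\f z_0\|_{W^*}+x_0$), proves continuity of $\f L$ using \eqref{bound}, extends by Hahn--Banach to a measure on a compactification $\ov\Omega\times\ov{B^n}$, and recovers $\nu$ and $\gamma$ by disintegration, with the concentration at the sphere at infinity and the singular part feeding $\gamma$. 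So the energy identity is built into the construction rather than verified afterwards, and no Lipschitz estimate on a value function is ever needed. To rescue your approach you would have to either prove the $1$-Lipschitz property of the integrated cost functional under the stated hypotheses (which I do not believe holds without further structure) or abandon the minimal-energy selection in favour of an explicit competitor built from $\f y_0$ and $\f y_1$, at which point you are essentially reconstructing the paper's argument.
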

\begin{proof}
First, we note that from~\eqref{bound} we infer by duality that $ \| g(\f y (x))\|_{W^*} \leq \int_\Omega \eta(\f y(x)) \de x+ c $ for all $ \f y \in V$. Due to the surjectivity of $g$, we observe that there exists a $\f y_1\in L^\infty(0,T;V) $ such that $ g(\f y_1(x,t))=\f z_0(x,t)$. Note that the measurability of $\f y_1$ follows from the measurability of $\f z_0$ and the continuity of $g$.  
We want to consider a compactification of 
 $\R^n$.   
Therefore, we define the bijection $ \Phi: \R^n \to B^n$ via $\Phi(\f y) = \frac{\f y}{1+\eta(\f y)}$. Here $ B^d$ denote the usual open unit-ball in $\R^d$. 
Via the transformation $ \tilde{f}(x,\f s) : = \frac{f (x,\Phi^{-1}( \f s)) }{1+\eta( \Phi^{-1}(\f s))}$ we define $\tilde{f}\in \C(\ov\Omega\times B^n)$ for every $ f \in \C(\ov\Omega\times \R^n)$. We consider the functions 
\[
\C_{\eta}(\ov\Omega\times\R^n) :=\left \{f \in \C(\ov\Omega \times \R^n) \mid \exists g \in \C(\ov\Omega \times\ov{ B^n}) \text{ such that } g(x,\f s ) = \tilde{f}( x, \f s) \text{ on }\ov\Omega \times B^n\right \}\,. 
\]
It is an elementary exercise to show that this is indeed a Banach space if equipped with the norm $$ \| u\|_{\C_{\eta}(\ov\Omega \times \R^n)} : =\| \tilde{u}\|_{\C(\ov\Omega\times \ov{B^n})}= \sup_{(x,\f s)\in \ov\Omega\times \ov{B^n}} | \tilde{u}(x,\f s) | =  \sup_{x\in\ov\Omega} \sup_{\f y\in \R^n} \frac{|u(x, \f y) |}{1+\eta(\f y)}\,.$$
%
%
This unit Ball can now be compactified in the usual way, such that we    
   observe that
$$ X 
:= \C( \ov \Omega\times\ov{B^n} )  \text{  such that }
   (\C(\ov\Omega \times \ov{B^n}))^* = \mathcal{M}(\ov \Omega \times\ov{B^n})\,,$$ since $\ov \Omega \times \ov{B^n}$ is compact~\cite[Thm.~1.32~(iii)]{Roubicekmeas}.

As a linear subspace of $L^1(0,T;X)$, we consider the space 
\[Y :=  \left  \{ u \in L^1(0,T;X) \Bigg| \begin{matrix}
\exists ( \alpha_0 , \f \alpha _1 , \f \alpha _2, \alpha_3)\in L^1(0,T;
\C(\ov\Omega) \times \C(\ov\Omega;\R^n)\times  \C(\ov\Omega;\R^m) \times \R )
\\
u(x ,t,  \f s ) = \alpha_0(x,t ) + \f \alpha_1(x,t ) \cdot \f s + \f \alpha_2(x,t ) \cdot g(\f s)  +\alpha _3(t) \eta(\f s) 
\end{matrix} \right \} \,,\]
which is a linear subspace of $L^1(0,T;X) $. 
This holds due to the fact that $\eta$ is super linear and $g$ is not linear and also linearly independent of $\eta$. 
On $Y$, we define the linear functional $ \f L : Y \to \R$ via 
$$
\begin{aligned}
 \langle \f L, u\rangle &=\int_0^T \int_{\Omega} \alpha_0(x,t)  + \f \alpha_1(x,t) \cdot \f y_0(x,t) + \f \alpha_2(x,t) \cdot (g(\f y_0(x,t))+ g(\f y_1(x,t)) )\de x\de t  \\&\quad +\int_0^T \alpha_3(t) \int_{\Omega} \eta(\f y_0(x,t))\de x  + \alpha_3(t) \| g(\f y_1(t))\|_{W^*}  + \alpha_3 (t)
 x_0(t)
 \de t  \,.
\end{aligned} 
 $$
This is a linear injective functional on the linear subspace $Y$ of $L^1(0,T;X)$.
We need to prove that $\f L$ is continuous on $L^1(0,T;X)$. 
Therefore, we consider
\begin{align*}
{\langle \f L , u \rangle } &=\int_0^T\int_{\Omega} \alpha_0(x,t) + \f \alpha_1(x,t) \cdot \f y_0(x,t) + \f \alpha_2 (x,t)\cdot (g(\f y_0(x,t))+ g(\f y_1(x,t)) ) \de x\de t 
 \\&\quad +\int_0^T \alpha_3(t)\left ( \int_\Omega  \eta(\f y_0(x,t))  
\de x + x_0(t) +  \|g(\f y_1(t))\|_{W^*}\right )\de t 
 \\&= \int_0^T \int_\Omega \alpha_0(x,t) + \f \alpha_1(x,t) \cdot \f y_0(x,t) + \f \alpha_2(x,t) \cdot g(\f y_0(x,t))  + \alpha_3 (t)  \eta(\f y_0(x,t))\de x \de t \\ &\quad  +\int_0^T \int_{\Omega}  \alpha_0(x,t) + \f \alpha_1(x,t) \cdot \f y_1(x,t) + \f \alpha_2 (x,t)\cdot g(\f y_1(x,t))\de x  +\alpha_3(t) \|g(\f y_1(t))\|_{W^*})   \de t 
\\  &\quad -\int_0^T\int_{\Omega}  \alpha_0(x,t) + \f \alpha_1(x,t) \cdot \f y_1(x,t)\de x 
 + \alpha_3 (t) x_0(t)
 \de t 
\\
&\leq \int_0^T\int_{\Omega} \frac{u(x,t, \f y_0(x,t))  }{1 + \eta(\f y_0(x,t))} ( 1 + \eta(\f y_0(x,t)))  +\frac{u(x, \f y_1(x,t))  }{1 + \eta(\f y_1(x,t))} ( 1 + \eta(\f y_1 (x,t)))  \de x \de t 
\\
&\quad+ |\Omega| \| \alpha_0\|_{L^1(0,T;\C(\ov\Omega))} + \| \f \alpha_1\|_{L^1(0,T;\C(\ov\Omega ; \R^n))} \| \f y_1\|_{L^\infty(0,T;L^1(\Omega))} \\&\quad + c  + \|\alpha_3\|_{L^1(0,T)} \esssup_{t\in(0,T)}|x_0(t)|
\\
&\leq  c 
\| u \|_{L^1(0,T;X)} \left(1 + \esssup_{t\in(0,T)}\left (\int_{\Omega} \eta(\f y_0) +\eta(\f y_1) 
\de x+ |x_0|\right )  \right) < \infty \,.
\end{align*}
The first inequality follows from the inequality $ \|g(\f y_1)\|_{W^*} \leq \int_{\Omega} \eta(\f y_1) \de x + c$ for all $\f y _1 \in V$.
The last inequality follows from $\frac{u(x, \f s )  }{1 + \eta(\f s)} \leq \| u \|_{X} $ for all $ \f s \in \R^n$. 
The superlinearity of $ \eta$ guarantees that $\| \f y_1\|_{L^1(\Omega)}$ is bounded.  
We observe by the conditions on $\eta$ and $g$  that \[
 |\alpha_3| =  \lim_{|\f y|\to\infty} \frac{|  u(x,\f y)|}{1+\eta(\f y)} \leq \|u\|_{X}\,.
\]
Note that also the norms of the coefficient functions $ \alpha_0$ and $\f \alpha_1$ can be bounded accordingly. 

This implies that $\f L$
  can be extended by Hahn--Banach~\cite[Cor.~1.2 ]{brezis} to a  functional $\f l$ on $L^1(0,T;X)$, \textit{i.e.,} $ \f l \in (L^1(0,T;X))^* = L^\infty_{w^*}(0,T;X^*)$  such that it coincides with the functional $\f L$ on $ Y$ \textit{i.e.,} $ \langle \f l  , u \rangle = \langle \f L , u \rangle $ for all $u\in Y$ and $ \| \f l  \|_{X^*} = \| \f L\|_{Y^*}$.

 We may express the linear mapping $\f l \in L^\infty_{w^*}(0,T;\mathcal{M}(\ov \Omega \times \ov{ B^n}))$ via  
the measures, $ \f l (x,t, \f s ) )=  \mu _t(x, \f s )$   with  $ \mu_t \in \mathcal{M}(\ov \Omega\times \ov{ B^n})$ for a.e.~$t\in(0,T)$.
We define the recession  function $f^\infty\in\C(\ov\Omega\times \Se^{n-1})$ for a function $f\in \C_{\eta}(\ov\Omega\times \R^n)$ via 
$f^\infty(x,\f s) = \lim _{r\to\infty} \frac{f(x,r\f s) }{1+\eta(r\f s)}$. 
We define $\tilde{\mu}_t \in \mathcal{M}(\ov\Omega\times \R^n) $ via 
\begin{equation}
\int_{\ov\Omega} \int_{\R^n} f(x,\f y ) \de \tilde{\mu}_t(x,\f y ) := \int_{\ov\Omega}\int_{{B^n}} \tilde{f}(x,\f s) \de \mu_t(x,\f s) \quad \text{for all }f\in \C_{\eta}(\ov\Omega\times \R^n)\,.
\end{equation}

We may identify the measures $\mu_t$ with the sum of two measures, a measure on $\ov \Omega \times \R^n$ and a  measure on  $\ov \Omega \times \Se^{n-1}$ via
\begin{align*}
\int_{\ov\Omega}\int_{\ov{B^n}} \tilde{f}(x,\f s) \de \mu_t(x,\f s) 
&= \int_{\ov\Omega}\int_{{B^n}} \tilde{f}(x,\f s) \de \mu_t(x,\f s) +\int_{\ov\Omega}\int_{\Se^{n-1}} \tilde{f}(x,\f s) \de \mu_t(x,\f s) 
\\
&= \int_{\ov\Omega} \int_{B^n} \frac{f(x,\Phi^{-1}(\f s )) }{1+\eta(\Phi^{-1}(
\f s ))} \de \mu _t(x,\f s) +\int_{\ov\Omega}\int_{\Se^{n-1}} \tilde{f}(x,\f s) \de \mu_t(x,\f s) 
\\
&= \int_{\ov\Omega} \int_{\R^n} f(x,\f y ) \de \tilde{\mu}_t(x,\f y ) +\int_{\ov\Omega}\int_{\Se^{n-1}} {f}^\infty (x,\f s) \de \mu_t(x,\f s) 
\end{align*}
For $\tilde{\mu}_t$, we  apply desintegration of measures and afterwards the Radon--Nikodym derivative with respect to the Lebesgue-measure on $\Omega$, such that there exists $ \nu
 \in L^\infty_{w^*} (0,T; L^1(\Omega;\mathcal{M}(\R^n))) $ and $ \omega
  \in L^\infty_{w^*}(0,T; \mathcal{M}(\ov \Omega\times{\R^n}))$ with $ \de  \tilde{\mu}_t (x, \f s) = \de \nu_{x,t} (\f s)\de x + \de \omega_{t}(x, \f s )$, where we write the singular measure again as a measure on $\ov\Omega\times {\R^n}$. Note that $\omega $ is singular with respect to the  Lebesgue measure on $\ov\Omega$ such that it is only nonzero on  sets of Lebesgue-measure zero in $\Omega$. 
  
We observe that the recession functions $\tilde{1}$, $\tilde{\mathrm{id}}$, and $\tilde{g}$ to the functions $ 1$, $\mathrm{id}$, and $g$ are zero, since $\eta$ is superlinear and $ \lim_{|\f y|\to\infty} \frac{g(\f y)}{1+\eta(\f y)}=0$. Here $\mathrm{id}$ denotes the identity on $\R^n$. 
  Choosing first $\alpha_0\neq 0$ and $\alpha_i = 0$ for all $ i> 0$, secondly $ \alpha_1\neq 0$ and $\alpha_i = 0$ for all $ i\neq 1 $, and thirdly $ \alpha_2 \neq 0$ and $\alpha_i = 0$ for all $ i\neq 2 $, we observe
  \begin{align*}
  \int_0^T\int_\Omega \alpha_0(x,t) \de x\de t  &= \int_0^T\int_{\ov\Omega} \int_{{\R^n}} \alpha_0(x,t) \de (\nu_{x,t}(\f s) \de x+  \omega_{t}(x,\f s))\de t 
  \,, \\
  \int_0^T\int_{\Omega} \alpha_1(x,t) \f y_0(x) \de x\de t &= \int_0^T \int_{\ov\Omega} \int_{{\R^n}}  \alpha_1(x,t)\cdot  \f s  \de (\nu_{x,t}(\f s) \de x+ \omega_{t}(x,\f s))\de t 
  \,,
  \\
  \int_0^T\int_\Omega \alpha _3(x,t)\cdot  ( g( \f y_0(x,t) + g( \f y_1(x,t)) \de x\de t &= \int_0^T\int_{\ov\Omega} \int_{{\R^n}} \alpha _3(x,t) \cdot g(\f s) \de (\nu_{x,t}(\f s) \de x+ \omega_{t}(x,\f s))\de t
  \, 
  \end{align*}
  for all $\alpha_0 \in L^1(0,T;\C(\ov\Omega))$, $\alpha_1 \in L^1(0,T; \C(\ov\Omega;\R^n))$, and $\alpha_2 \in L^1(0,T; \C(\ov \Omega;\R^m))$. 
  In all those case, we infer that  the singular measure $ \omega_t$ vanishes, \textit{i.e.,}
$\int_{\ov\Omega} \phi(x) \int_{\ov{\R^{n}}} (1+\f s + g(\f s) )   \de \omega _{t} (x,\f s ) =0 $ for all $ \phi \in \C(\ov\Omega)$. 
Additionally, we infer that $ \int_{{\R^n}} \de \nu_{x,t} (\f s) = 1$,   
  which implies   $ \nu_{x,t} \in \mathcal{P}(\R^n)$ for a.e.~in $\Omega\times (0,T)$.
  Moreover, the above equations imply~\eqref{moments}.
  Finally, choosing $ \alpha_0=\alpha_1=\alpha_2=0$, we find
  \begin{multline*}
   \left (\int_0^T\alpha _3(t) \int_\Omega \eta(\f y_0(x,t))
    \de x+x_0(t) + \|g(\f y_1)\|_{W^*} \de t \right ) 
   \\=\int_0^T \alpha _3(t)\left ( \int_{\Omega}\int_{{\R^n}} \eta(\f s) (\de \nu_x(\f s) \de x + \de \omega_{t}(x,\f s) ) +\int_{\ov\Omega}\int_{\Se^{n-1}}  \de \mu_t(x,\f s) 
\right ) \de t 
  \end{multline*}
   for all $ \alpha_3 \in L^1(0,T)$. Note that the recession function to $\eta$ is $1$. 
  Defining   $$ \int_{\ov\Omega} \phi (x) \de \gamma_t(x) := \int_{\ov\Omega} \int_{{\R^n}} \eta(\f s)\phi(x)  \de \omega_{t}(x,\f s)+\int_{\ov\Omega}\int_{\Se^{n-1}} \phi(x) \de \mu_t(x,\f s) \text{  for all  }\phi \in \C(\Omega)\,,$$ we infer that $ \gamma_t\in \mathcal{M}^+(\Omega)$ and~\eqref{moments2}.
 \end{proof}

\begin{lemma}[Factorization of linear maps]\label{lem:fact}
Let $ \mathbb X$ and $\mathbb Y$ be two normed spaces and $ \f l : \mathbb X \to \R$ be a linear functional and $ A : \mathbb X \to \mathbb Y $ a linear functional such that 
 $ \langle \f l  , x \rangle_X \leq p( A( x ) )$ with $ p: \mathbb Y \to [0,\infty)$ being a Minkowski functional, \textit{i.e.,} $p(\lambda y)= \lambda p(y)$ for all $\lambda \geq   0$, $y\in \mathbb Y$ and $ p(y_1+y_2)\leq p(y_1)+p(y_2)$ for all $y_1$, $y_2\in \mathbb Y$.  
Then there exists a linear functional $ \f R : \mathbb Y \to \R$ such that $ \langle \f R , A ( x)\rangle_Y = \langle \f l ,x \rangle_X $ for all $x\in \mathbb X$
 as well as $  \langle \f R , y\rangle_Y \leq p( y)$ 
for all $y\in \mathbb Y$. 
\end{lemma}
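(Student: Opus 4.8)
The plan is to factorise $\f l$ through $A$ in two steps: first define a candidate functional on the range $A(\mathbb X)\subset\mathbb Y$ by the only sensible prescription $\langle \f R, A(x)\rangle_Y := \langle \f l, x\rangle_X$, and then extend it to all of $\mathbb Y$ by the analytic Hahn--Banach theorem while preserving the domination by $p$. Since $A$ is linear, $A(\mathbb X)$ is a linear subspace of $\mathbb Y$, which is the natural domain for the first step.

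The crux is to check that this prescription is well defined, i.e.\ that $A(x)=0$ forces $\langle \f l, x\rangle_X=0$; here I would use the one-sided hypothesis in both directions. First note that $p(0)=0$, which follows from positive homogeneity with $\lambda=0$. If $A(x)=0$, then the assumption gives $\langle \f l, x\rangle_X\leq p(A(x))=p(0)=0$; applying the same assumption to $-x$, and using linearity of $A$ so that $A(-x)=0$ as well, yields $-\langle \f l, x\rangle_X=\langle \f l, -x\rangle_X\leq p(0)=0$, hence $\langle \f l, x\rangle_X\geq 0$. Together these force $\langle \f l, x\rangle_X=0$. Consequently, if $A(x_1)=A(x_2)$ then $A(x_1-x_2)=0$, whence $\langle \f l, x_1\rangle_X=\langle \f l, x_2\rangle_X$, so $\f R$ is unambiguously defined on $A(\mathbb X)$. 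Linearity of $\f R$ on $A(\mathbb X)$ is then inherited from the linearity of $\f l$ and $A$, and by construction $\langle \f R, A(x)\rangle_Y=\langle \f l, x\rangle_X\leq p(A(x))$ for every $x\in\mathbb X$, so $\f R$ is dominated by $p$ on the subspace $A(\mathbb X)$.

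It remains to extend $\f R$ from $A(\mathbb X)$ to all of $\mathbb Y$. Since $p$ is a Minkowski functional it is sublinear (positively homogeneous and subadditive), so the analytic form of the Hahn--Banach theorem (see \cite[Thm~1.1]{brezis}) applies and produces a linear functional $\f R:\mathbb Y\to\R$ extending the one on $A(\mathbb X)$ and satisfying $\langle \f R, y\rangle_Y\leq p(y)$ for all $y\in\mathbb Y$. This extension still satisfies $\langle \f R, A(x)\rangle_Y=\langle \f l, x\rangle_X$ for all $x\in\mathbb X$, which is exactly the asserted factorisation.

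The only genuine obstacle is the well-definedness step; everything afterwards is a routine invocation of Hahn--Banach. It is worth emphasising that the argument uses no topology on $\mathbb X$ or $\mathbb Y$ beyond the algebraic structure, so that continuity of $\f l$, $A$, or $p$ is not required, and that the one-sided estimate $\langle \f l, x\rangle_X\leq p(A(x))$ is exploited twice, for $x$ and for $-x$, precisely to control the behaviour of $\f l$ on the kernel of $A$.
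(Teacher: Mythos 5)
Your proof is correct and takes essentially the same route as the paper's: both establish that $A(x)=0$ forces $\langle \f l , x\rangle =0$ from the one-sided bound (the paper by scaling with arbitrary $\alpha\in\R$, you by applying the bound to $x$ and $-x$), then define the functional on $\ran A$ — the paper via the induced bijection on the quotient space, you via a direct well-definedness check — and finally extend by Hahn--Banach under the sublinear majorant $p$.
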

\begin{proof}
First, we observe that $ \ker \f l \subset \ker A $.
Indeed, from $ A(x)=0$, we infer due to the linearity that $ \alpha \langle \f l , x\rangle \leq | \alpha| p( A( x) )= 0$ for all $\alpha \in \R$ and  $x \in \mathbb X$, which implies $ \langle \f l , x \rangle = 0 $. 
Hence, we have that $A$ is a bijective mapping on the quotient space  $ \tilde A : \mathbb X _{\backslash \ker A} \to \ran A $, where $\tilde A (x ) = A (x)$ for all $x\in \mathbb X$. 

We define the mapping $ \f L : \ran A \subset W \to  \R $ via $\langle  L , y\rangle _{\mathbb Y}  = \langle \f l , (\tilde A)^{-1} (y) \rangle_{\mathbb X}\leq p(y) $ for all $ y \in \ran A$. The Theorem of Hahn--Banach now guarantees that this mapping $\f L$ can be extended to a mapping on $\mathbb Y$ such that $ \f R : \mathbb Y \to \R$ has the asserted properties. 
\end{proof}

\section{Interface evolution in two phase Navier--Stokes equations}\label{sec:1}
We consider the flow of two incompressible fluids with surface tension on the surface between the two fluids. 
The volume occupied by the first fluid is denoted by the indicator function $\chi=\chi(x, t)$.
The local fluid velocity is denoted by  $\f v=\f v(x, t)$, and the local pressure by $p=p(x, t)$. The fluid-fluid interface moves just according to the fluid velocity, where the evolution of the velocity of each fluid and the pressure are determined by the incompressible Navier--Stokes equations.  Moreover,  the fluid-fluid interface exerts a surface tension onto the fluids proportional to the mean curvature of the interface. Together with the natural no-slip boundary condition and the appropriate boundary conditions for the stress tensor on the fluid-fluid interface, the system is given by 
\[
\begin{aligned}
\partial_t \chi+(\f v \cdot \nabla) \chi&{}=0&& \text{in }\R^d \times (0,T) \,,\\
\rho(\chi) \partial_t \f v+\rho(\chi)(\f v \cdot \nabla) \f v&{}=-\nabla p+\nabla \cdot\left(2\mu(\chi)(\nabla \f v )_{\sym}\right)+\sigma \mathrm{H}|\nabla \chi|&& \text{in }\R^d\times (0,T)\,,\\
\nabla \cdot \f v&{}=0&& \text{in }\R^d\times (0,T)\,,\\
(\chi(0), \f v (0)) &{}= (\chi_0,\f v_0)&& \text{in }\R^d\,.
\end{aligned}\]
where H denotes the mean curvature vector of the interface $\partial\{\chi=0\}$ and $|\nabla \chi|$ denotes the surface measure $\left.\mathcal{H}^{d-1}\right|_{\partial\{\chi=0\}}$. 
 For the sake of brevity, we have used the abbreviations $\rho(\chi):=\rho^{+} \chi+\rho^{-}(1-\chi)$ and $\mu(\chi):=\mu^{+} \chi+\mu^{-}(1-\chi)$. 
Here, $\mu^+$ and $\mu^-$ are the shear viscosities of the two fluids and $\rho^+$ and $\rho^-$ are the densities of the two fluids. The constant $\sigma$ is the surface tension coefficient. 
Let a surface tension constant $\sigma>0$, the densities and shear viscosities of the two fluids $\rho^{ \pm}, \mu^{ \pm}>0$, a finite time $T >0$, a solenoidal initial velocity profile $v_0 \in L^2\left(\mathbb{R}^d ; \mathbb{R}^d\right)$, and an indicator function of the volume occupied initially by the first fluid $\chi_0 \in \mathrm{BV}\left(\mathbb{R}^d\right)$ be given.

The total energy of the system is given by the sum of kinetic and surface tension energies
$$
\E(\chi, v):=\int_{\mathbb{R}^d} \frac{1}{2} \rho(\chi)|\f v|^2 \mathrm{~d} x+\sigma \int_{\mathbb{R}^d} 1 \mathrm{~d}|\nabla \chi|\,.
$$

It is at least formally subject to the energy dissipation inequality
$$
\E(\chi, \f v)(T)+\int_0^T \int_{\mathbb{R}^d} 2{\mu(\chi)}\left|(\nabla \f v )_{\sym}\right|^2 \mathrm{~d} x \leq \E(\chi, \f v   )(0) .
$$

\begin{definition}[Varifold solution for the two-phase Navier--Stokes equation] \label{def:vari}
A triple $(\chi, \f v, V)$ consisting of a velocity field $\f  v$, an indicator function $\chi$ of the volume occupied by the first fluid, and an oriented varifold $V$ with
$$
\begin{aligned}
\f v & \in L^2(0, T ; \Hsig (\R^d) \cap L^{\infty}(0, T  ; \Ha(\mathbb{R}^d)), \\
\chi & \in L^{\infty}(0, T  ; \mathrm{BV}(\R^d  ;\{0,1\})), \\
V & \in L_{w^*}^{\infty}(0, T  ; \mathcal{M}(\R^d \times \mathbb{S}^{d-1})),
\end{aligned}
$$
is called a varifold solution to the free boundary problem for the Navier--Stokes equation for two fluids with initial data $\left(\chi_0,\f v _0\right)$ if the following conditions are satisfied:
\begin{itemize}
\item The velocity field $\f v$  has vanishing divergence $\nabla \cdot\f v =0$ and the equation for the momentum balance
\begin{equation}\label{TwoPhaseNav}
\begin{aligned}
& \int_{\mathbb{R}^d} \rho(\chi(\cdot, t))\f v (\cdot, t) \cdot \f \eta(\cdot, t) \mathrm{d} x-\int_{\mathbb{R}^d} \rho\left(\chi_0\right)\f v _0 \cdot \f \eta(\cdot, 0) \mathrm{d} x \\
&= \int_0^t \int_{\mathbb{R}^d} \rho(\chi) \f v \cdot \partial_t \f \eta \mathrm{~d} x \mathrm{~d} t+\int_0^t \int_{\mathbb{R}^d} \rho(\chi) \f v  \otimes\f v : \nabla \f \eta \mathrm{d} x \mathrm{~d} t \\
& \quad-\int_0^t \int_{\mathbb{R}^d} \mu(\chi)\left(\nabla\f v \right)_{\sym}: \nabla \f \eta \mathrm{d} x 
-\sigma  \int_{\mathbb{R}^d \times \mathbb{S}^{d-1}}(\operatorname{Id}-s \otimes s): \nabla \f \eta \mathrm{d} V_t(x, s) \mathrm{d} t
\end{aligned}
\end{equation}
is satisfied for a.e.~$t \in\left[0, T \right)$ and every smooth vector field $\f \eta \in$ $C _{\mathrm{c}}^{\infty}\left(\mathbb{R}^d \times\left[0, T \right) ; \mathbb{R}^d\right)$ with $\nabla \cdot \f \eta=0$. 
\item The indicator function $\chi$ of the volume occupied by the first fluid satisfies the weak formulation of the transport equation
\begin{equation}
\int_{\mathbb{R}^d} \chi(\cdot, t) \varphi(\cdot, t) \mathrm{d} x-\int_{\mathbb{R}^d} \chi_0 \varphi(\cdot, 0) \mathrm{d} x=\int_0^t \int_{\mathbb{R}^d} \chi\left(\partial_t \varphi+(\f v \cdot \nabla) \varphi\right) \mathrm{d} x \mathrm{~d} t\label{eq:transport}
\end{equation}  for almost every $t \in\left[0, T \right]$ and all $\varphi \in C _{\mathrm{c}}^{\infty}\left(\mathbb{R}^d \times\left[0, T \right)\right)$.
\item The energy dissipation inequality
\begin{equation}\label{endisNavTwo}
\E^V (\chi,\f v , V)(t)
 +\int_0^t \int_{\mathbb{R}^d} {\mu(\chi)}\left|(\nabla\f v )_{\sym}\right|^2 \mathrm{~d} x \mathrm{~d} t 
\leq  \E (\chi,\f v , V)(0)
\end{equation}
is satisfied for almost every $ t \in [0, T ]$ with  the Varifold energy
\begin{equation}\label{enmeas}
\E^V (\chi,\f v , V)(t):=\int_{\mathbb{R}^d} \frac{1}{2} \rho(\chi(\cdot, t))|\f v(\cdot, t)|^2 \mathrm{~d} x+\sigma\left|V_t\right|\left(\mathbb{R}^d \times \mathbb{S}^{d-1}\right)
\end{equation}
is a nonincreasing function of time.
\item 
 The phase boundary $\partial\{\chi(\cdot, t)=0\}$ and the varifold $V$ satisfy the compatibility condition
\begin{align}
\int_{\mathbb{R}^d \times \mathbb{S}^{d-1}} \psi(x) s \mathrm{~d} V_t(x, s)=\int_{\mathbb{R}^d} \psi(x) \mathrm{d} \nabla \chi(x)\label{compatible}
\end{align}
for almost every $t \in\left[0, T \right)$ and every smooth function $\psi \in C _{\mathrm{c}}^{\infty}\left(\mathbb{R}^d\right)$.
\end{itemize}

\end{definition}
\begin{remark}
The existence of Varifold solutions for the two-phase Navier--Stokes equation was shown in a slightly different setting in~\cite{AbelsVari}. The weak-strong uniqueness of these solutions was proven in~\cite{HenselFischer}. Varifold solutions are a recurrent solution framework for geometrical flows such as the mean-curvature flow~\cite{TimHensel} or Mullin Sekerka Flow~\cite{MullinSekerka}. The proposed energy-variational framework has the potential to also simplify these solvability concepts by using the energy-variational structure of the system. 
\end{remark}

\begin{definition}[Energy-variational solution]\label{def:NavTwo}
A triple $(\chi,\f v , E)$  with
$$
\begin{aligned}
\f v & \in L^2(0, T ; \Hsig (\mathbb{R}^d ) )\cap L^{\infty}(0, T  ; \Ha(\mathbb{R}^d)), \\
\chi & \in L^{\infty}(0, T  ; \mathrm{BV}(\mathbb{R}^d ;\{0,1\})), \\
E & \in \BV
\end{aligned}
$$
is called an energy-variational solution to the free boundary problem for the Navier--Stokes equation for two fluids with initial data $\left(\chi_0,\f v _0\right)$ if 
\begin{itemize}

\item The auxiliary variable $E$ is an upper bound for the energy
\begin{equation}
E (t) \geq \E(\f v(t) , \chi(t)) \text{ for a.e. }t\in(0,T)  \text{ with } \E(\f v , \chi) : = \int
_{\R^d}  \frac{\rho(\chi)}{2} | \f v |^2 \de x + \sigma | \nabla \chi |(\R^d) \,.
\end{equation}
\item The energy-variational inequality 
\begin{multline}\label{envar:NavTwo}
\left [ E - \int_{\R^d} \rho(\chi)\f v \cdot \f \eta \de x - \int_{\R^d} \chi \varphi \de x \right ] \Bigg|_s^t + \int_s^t \int_{\R^d} \chi ( \t \varphi + ( \f v \cdot \nabla) \varphi) \de x \de t 
\\+ \int_s^t \int_{\R^d} \rho(\chi) \f v \cdot \t \f \eta + \rho(\chi) \f v \otimes \f v : \nabla \f \eta  +\mu(\chi) (\nabla \f v )_{\sym} : ( \nabla \f v- \nabla \f \eta) \de x   \de \tau 
\\
- \sigma \int_s^t\int_{\R^d} \left ( \text{Id}- \frac{\nabla \chi}{|\nabla \chi|}\otimes \frac{\nabla \chi}{|\nabla \chi|} \right ) : \nabla \f \eta \de | \nabla \chi|\de \tau 
\\+ \int_s^t \|( \nabla \f \eta)_{\sym,-} \|_{L^\infty(\R^d;\R^{d\times d})} \left  [ \E( \f v ,\chi) - E \right ]
\de \tau \leq 0 
\end{multline}
holds for all $ \varphi \in \C^1([0,T]; \C^\infty(\R^d))$, $\f \eta \in \C^1([0,T]; \C^\infty_{c,\sigma} (\R^d; \R^d))$ and a.e.~$0\leq s<t\leq T $ including $s=0$ with $ \chi(0)= \chi_0 $, $ \f v (0) = \f v_0$ and $E(0)= \E(\f v_0, \chi_0)$. 
\end{itemize}

\end{definition} 
\begin{theorem}
Let $ (\f v , \chi, E) $ be an energy-variational solution according to Definition~\ref{def:NavTwo}. Then there exists a varifold measure $ V \in L^\infty_{w^*}(0,T;\M(\R^d \times \Se^{d-1})) $ such that $(\f v , \chi , V)$ is a varifold solution in the sense of Definition~\ref{def:vari}. 
 
Let $(\f v , \chi , V)$ be a varifold solution in the sense of Definition~\ref{def:vari} such that there exists an $E\in \BV$ with $ E(0) = \E( \f v _0 , \chi_0)$ such that $E\geq \E(\chi,\f v , V)$ a.e. on $(0,T)$ and 
\begin{equation}\label{eq:enintwo}
 E\Big|_s^t + \int_s^t\int_{\R^d} \mu(\chi) | ( \nabla \f v )_{\sym}|^2 \de x \de \tau \leq 0 \quad \text{for all }T\geq t> s\geq 0\,.
\end{equation}
Then $(\f v , \chi , E)$ is an energy-variational solution in the sense of Definition~\ref{def:NavTwo}.
\end{theorem}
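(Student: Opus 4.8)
The two implications are proved separately. Throughout I write $A:=(\nabla\f\eta)_{\sym}$ and note that $\nabla\cdot\f\eta=0$ makes $A$ trace-free, so that $(\mathrm{Id}-\f s\otimes\f s):\nabla\f\eta=\tr A-\f s^{\top}A\f s=-\f s^{\top}A\f s$ for every $\f s\in\Se^{d-1}$, and likewise with $\f n=\nabla\chi/|\nabla\chi|$. For the implication \emph{energy-variational $\Rightarrow$ varifold} I would first peel off the transport equation and the energy inequality directly from \eqref{envar:NavTwo}. Taking $\f\eta=\f 0$ the dissipation-potential term vanishes ($\|(\nabla\f 0)_{\sym,-}\|=0$), and the remaining $\varphi$-dependent part $L(\varphi):=-[\int_{\R^d}\chi\varphi]_s^t+\int_s^t\!\int_{\R^d}\chi(\t\varphi+(\f v\cdot\nabla)\varphi)$ is linear in $\varphi$ while $[E]_s^t$ is not; testing with $\pm\lambda\varphi$ and sending $\lambda\to\infty$ forces $L(\varphi)=0$, which is \eqref{eq:transport}. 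Choosing in addition $\varphi=0$ gives $[E]_s^t+\int_s^t\!\int_{\R^d}\mu(\chi)|(\nabla\f v)_{\sym}|^2\le0$, i.e. \eqref{eq:enintwo}.

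It then remains to manufacture $V$. The plan is to read the $\f\eta$-linear part of \eqref{envar:NavTwo}, minus the reduced-boundary contribution $|\nabla\chi|\otimes\delta_{\f n}$, as a linear functional on $\{\nabla\f\eta\}$ dominated by the sublinear map $\nabla\f\eta\mapsto\|(\nabla\f\eta)_{\sym,-}\|_{L^\infty}(E-\E(\f v,\chi))$, and to feed this into the Hahn--Banach construction of Lemma~\ref{lem:hahn} together with the generalized Young-measure disintegration of Lemma~\ref{lem:Young}. Because the dominating functional is built from the positive semidefinite tensor $\mathrm{Id}-\f s\otimes\f s$, the representing object can be taken to be a \emph{nonnegative} varifold $V\in L^\infty_{w^*}(0,T;\M(\R^d\times\Se^{d-1}))$. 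I would pin down its $\f s$-moments by construction: the reduced-boundary part carries the first moment $\int_{\Se^{d-1}}\f s\,\de V_t=\nabla\chi$, yielding \eqref{compatible}, while the extra mass generated from the gap $E-\E$ is chosen with vanishing first moment (symmetric in $\f s$) and total mass fixed by $\sigma|V_t|(\R^d\times\Se^{d-1})=E(t)-\tfrac12\int_{\R^d}\rho(\chi)|\f v|^2$. With this normalization $\E^V=E$, so the scalar inequality for $E$ upgrades to \eqref{endisNavTwo}, \eqref{TwoPhaseNav} holds because $V$ was defined precisely to close the momentum balance, and at $t=0$ the reduced-boundary varifold gives $\E^V(0)=\E(\f v_0,\chi_0)=E(0)$.

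For the converse \emph{varifold $\Rightarrow$ energy-variational} the bound $E\ge\E(\f v,\chi)$ is immediate, since $E\ge\E^V$ and $|\nabla\chi|(\R^d)=|\int\f s\,\de V_t|(\R^d)\le|V_t|(\R^d\times\Se^{d-1})$ by \eqref{compatible} and $|\f s|=1$. To obtain \eqref{envar:NavTwo} I substitute the three structural relations into its left-hand side: \eqref{eq:transport} cancels the $\varphi$-block, \eqref{TwoPhaseNav} written on $[s,t]$ turns the momentum block into $+\sigma\int_s^t\!\int(\mathrm{Id}-\f s\otimes\f s):\nabla\f\eta\,\de V$, and the quadratic term $\mu(\chi)|(\nabla\f v)_{\sym}|^2$ extracted from $\mu(\chi)(\nabla\f v)_{\sym}:(\nabla\f v-\nabla\f\eta)$ cancels against \eqref{eq:enintwo}. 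The left-hand side is thereby majorized by the single residual $\int_s^t[D(\f\eta)+\|(\nabla\f\eta)_{\sym,-}\|_{L^\infty}(\E-E)]\,\de\tau$, where the surface-tension defect is $D(\f\eta)=\sigma\int(\mathrm{Id}-\f s\otimes\f s):\nabla\f\eta\,\de V-\sigma\int(\mathrm{Id}-\f n\otimes\f n):\nabla\f\eta\,\de|\nabla\chi|$.

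The whole difficulty is concentrated in showing this residual is nonpositive, i.e. $D(\f\eta)\le\|(\nabla\f\eta)_{\sym,-}\|_{L^\infty}(E-\E)$, and this is the step I expect to be the main obstacle. Using $\nabla\cdot\f\eta=0$ and disintegrating $V_t=\tilde\nu_{t,x}\otimes\lambda_t$ with $\bar{\f s}=\int\f s\,\de\tilde\nu_{t,x}$ and $\f n=\bar{\f s}/|\bar{\f s}|$ from \eqref{compatible}, the defect becomes $\sigma\int A:\bigl(\tfrac{\bar{\f s}\otimes\bar{\f s}}{|\bar{\f s}|}-\int\f s\otimes\f s\,\de\tilde\nu_{t,x}\bigr)\de\lambda_t$, while the only energy room available is $E-\E\ge\sigma(|V_t|-|\nabla\chi|)=\sigma\int(1-|\bar{\f s}|)\,\de\lambda_t$. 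One must therefore control the generally indefinite defect matrix $\int\f s\otimes\f s\,\de\tilde\nu_{t,x}-\tfrac{\bar{\f s}\otimes\bar{\f s}}{|\bar{\f s}|}$ against $\|(\nabla\f\eta)_{\sym,-}\|_{L^\infty}(1-|\bar{\f s}|)$, exploiting simultaneously the positive semidefiniteness of $\mathrm{Id}-\f s\otimes\f s$ and the trace-freeness of $A$; it is precisely here that the exact coefficient of the dissipation-potential term in \eqref{envar:NavTwo} must be matched, since a careless estimate loses a dimensional or numerical factor. For Part~1 the analogous delicate point is the reconciliation, inside the Hahn--Banach/Young-measure step, of the prescribed zeroth and first $\f s$-moments with the second-moment information carried by the momentum defect.
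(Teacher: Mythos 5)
Your two-part strategy is the same as the paper's: rescale the test functions to peel off the transport equation \eqref{eq:transport} and the scalar inequality \eqref{eq:enintwo}, represent the $\f\eta$-linear part by Hahn--Banach against the sublinear weight, and build $V$ from the resulting defect in the first implication; subtract the weak formulations \eqref{TwoPhaseNav}, \eqref{eq:transport} from \eqref{eq:enintwo} and estimate the surface-tension residual in the second. The genuine gap is the step you yourself declare to be ``the main obstacle'': the residual estimate $D(\f\eta)\le\|(\nabla\f\eta)_{\sym,-}\|_{L^\infty}\,(E-\E(\f v,\chi))$ is never proved; you only disintegrate it and list the features one should ``exploit''. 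Since the whole converse implication reduces to precisely this inequality, that half of the theorem remains unproven in your write-up. For comparison, the paper disposes of it in one step: since $\mathrm{Id}:\nabla\f\eta=\di\f\eta=0$,
\begin{align*}
-\int_{\R^d\times\Se^{d-1}}(\mathrm{Id}-\f s\otimes\f s):\nabla\f\eta\,\de V_\tau
={}&\int_{\R^d}\f n\otimes\f n:\nabla\f\eta\,\de|\nabla\chi|
+\int_{\R^d\times\Se^{d-1}}\f s\otimes\f s:\nabla\f\eta\,\bigl(\de V_\tau-\de(\delta_{\f n}|\nabla\chi|)\bigr)\\
\ge{}&\int_{\R^d}\f n\otimes\f n:\nabla\f\eta\,\de|\nabla\chi|
-\|(\nabla\f\eta)_{\sym,-}\|_{L^\infty(\R^d;\R^{d\times d})}\bigl(|V_\tau|(\R^d\times\Se^{d-1})-|\nabla\chi|(\R^d)\bigr),
\end{align*}
followed by $|V_\tau|-|\nabla\chi|=\bigl(\E^V(\f v,\chi,V)-\E(\f v,\chi)\bigr)/\sigma\le\bigl(E-\E(\f v,\chi)\bigr)/\sigma$, where $\f n=\nabla\chi/|\nabla\chi|$.

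Two further remarks. First, your suspicion about this step is substantive: the displayed lower bound is immediate only if $V_\tau\ge\delta_{\f n}|\nabla\chi|$ as measures, in which case the difference is nonnegative and its pairing per unit mass is bounded below by $-\|(\nabla\f\eta)_{\sym,-}\|_{L^\infty}$; the compatibility condition \eqref{compatible} alone does not yield this domination, and your ``indefinite defect matrix'' really occurs, e.g.\ for a folded-interface fibre measure $\tilde\nu=\tfrac12(\delta_{\f e_1}+\delta_{\f e_2})$, where the pointwise inequality fails for a trace-free $A$ with eigenvectors along $\f e_1\pm\f e_2$. So completing your plan needs either this measure-theoretic domination or some input beyond $E\ge\E^V(\f v,\chi,V)$; merely combining positive semidefiniteness of $\mathrm{Id}-\f s\otimes\f s$ with trace-freeness of $A$, as you propose, cannot close it. Second, in the first implication you assert the Hahn--Banach object ``can be taken to be a nonnegative varifold'', but Hahn--Banach and Riesz only give a matrix-valued measure $\mathfrak R\in L^\infty_{w^*}(0,T;\M(\R^d;\R^{d\times d}))$; the paper must still verify that $\mathfrak R$ is symmetric positive semidefinite (testing with skew-symmetric and sign-definite fields), disintegrate $\mathfrak R=B\mu$ with $\tr B=1$, and set $\nu_{x,t}=\sum_i\tfrac{\lambda_i}{2}(\delta_{\f e_i}+\delta_{-\f e_i})$ and $V_t=\delta_{\f n}|\nabla\chi|+\nu_{x,t}\mu$, which is what makes the zeroth, first and second $\f s$-moments consistent simultaneously. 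Your extra normalization forcing $\E^V(\f v,\chi,V)=E$ by adding $\f s$-isotropic mass is admissible (isotropic parts are invisible in \eqref{TwoPhaseNav} because $\di\f\eta=0$) but unnecessary: $\E^V(\f v,\chi,V)\le E$ together with the monotonicity of $E$ already gives \eqref{endisNavTwo}.
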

\begin{remark}
The assertion of the above theorem shows that the the equivalence of both solution concept depends on the  choice of the formulation of the energy inequality. 
The existence of energy-variational solution has not been shown so far. Usually the existence of energy-variational solutions can be shown by the same approximation strategy as the existence of varifold solutions~\cite{AbelsVari}, but using the energy-variational structure for the limit passage instead of varifolds. In~\cite{visco}, we designed a general minimizing movements scheme to show existence of energy-variational solutions to a large class of systems. This result is not applicable to the setting of the two-phase Navier--Stokes equations, but we plan to extend the results of~\cite{visco} to the framework of interface evolution in a future work. 

\end{remark}
\begin{proof}
We consider the energy-variational inequality~\eqref{envar:NavTwo} multiplied by $\alpha$ with $ \f \eta \equiv 0 $, $\varphi = \frac{\psi}{\alpha}$ for $\alpha >0 $ and $ \psi \in \C^1([0,T]; \C^\infty_c(\R^d))$. Taking the limit $\alpha \searrow 0$ 
we infer~\eqref{eq:transport}. 
Now, we consider~\eqref{envar:NavTwo} multiplied by $\alpha$ with $ \f \eta =  \frac{\f \zeta}{\alpha} $ for $\alpha >0 $, $\varphi \equiv 0 $  and $ \f \eta \in \C^1([0,T]; \C^\infty_c(\R^d;\R^d))$. Taking again the limit $\alpha \searrow 0$, we infer 
\begin{multline*}
 - \int_{\R^d} \rho(\chi)\f v \cdot \f \zeta \de x  \Big|_s^t  + \int_s^t \int_{\R^d} \rho(\chi) \f v \cdot \t \f \zeta + \rho(\chi) \f v \otimes \f v : \nabla \f \zeta  - \mu(\chi) (\nabla \f v )_{\sym} : \nabla \f \zeta \de x   \de \tau 
\\
- \sigma \int_s^t\int_{\R^d} \left ( \text{Id}- \frac{\nabla \chi}{|\nabla \chi|}\otimes \frac{\nabla \chi}{|\nabla \chi|} \right ) : \nabla \f \zeta \de | \nabla \chi|
+ \|( \nabla \f \zeta)_{\sym,+} \|_{L^\infty(\R^d;\R^{d\times d})} \left  [ \E( \f v ,\chi) - E \right ]
\de \tau \leq 0 
\end{multline*}
for all $\f \zeta \in \C^1([0,T]; \C^\infty_{c,\sigma} (\R^d))$ and a.e.~$0\leq s<t\leq T $ including $s=0$ with $ \chi(0)= \chi_0 $, $ \f v (0) = \f v_0$.
Considering the linear form $\f l : \mathcal V \to \R$ with $\mathcal{V}:=\{ \nabla \f \zeta | \f \zeta \in  \C^1([0,T];\C^1_{c,\sigma} (\R^d;\R^d))\}$ given by 
\begin{align*}
\langle \f l ,\nabla \f \zeta \rangle :=  &\int_{\R^d} \rho(\chi_0) \f v_0 \f \zeta(0) \de x + \int_0^T \int_{\R^d} \rho(\chi) \f v \cdot \t \f \zeta + \rho(\chi) \f v \otimes \f v : \nabla \f \zeta  - \mu(\chi) (\nabla \f v )_{\sym} : \nabla \f \zeta \de x   \de t 
\\
&- \sigma \int_0^T\int_{\R^d} \left ( \text{Id}- \frac{\nabla \chi}{|\nabla \chi|}\otimes \frac{\nabla \chi}{|\nabla \chi|} \right ) : \nabla \f \zeta \de | \nabla \chi| \de t 
\end{align*}
and the Minkowski functional 
$p: L^1(0,T;\C(\R^d; \R^{d\times d} ))\to [0,\infty)$ via 
\begin{equation}
\label{defp}
p(A ) := \int_0^T \|(A )_{\sym,+} \|_{L^\infty(\R^d;\R^{d\times d})} \left  [ E- \E( \f v ,\chi)  \right ]\de t \,.
\end{equation}
Note that the function $\f \zeta $ is uniquely determined by its gradient, since it is compactly supported. 
From~\eqref{envar:NavTwo}, we infer 
$ \langle \f l ,\nabla \f \zeta \rangle \leq p(\nabla \f \zeta)$ for all $\f \zeta \in \C^1([0,T];\C^1_{c,\sigma} (\R^d;\R^d))$. 

The Hahn--Banach theorem and the Riesz representation theorem implies that there exists $ \mathfrak{R}\in L^\infty_{w^*}(0,T;\M(\R^d;\R^{d\times d}))$ such that 
\begin{align}\label{definmeas}
\langle \f l ,\f \zeta \rangle  = - \int_0^T \int_{\R^d} \nabla \f \zeta : \de \mathfrak{R}\de t \quad\text{ and } \quad -\int_0^T \int_{\R^d} A : \de \mathfrak{R}\de t \leq p(A)
\end{align}
 for all $A \in  L^1(0,T;\C(\R^d; \R^{d\times d} ))$.  
Choosing $ A = B$ with $B = (B)_{\skw}\in \R^{d\times d}$ in~\eqref{definmeas} implies that 
 $0= \int_{\R^d} B : \de \mathfrak{R}$ for all $B\in \C(\R^d; \R^{d\times d}_{\skw})$, which is nothing else than $ \mathfrak{R}\in L^\infty_{w^*}(0,T;\M(\R^d;\R^{d\times d}_{\sym}))$. 
 Moreover, choosing $ A \in \C(\R^d; R^{d\times d}_{\sym,+}$ in~\eqref{definmeas} implies $0\geq  \int_{\R^d} B : \de \mathfrak{R}$ for all $B\in \C(\R^d; \R^{d\times d}_{\sym,-})$, which is nothing else than $ \mathfrak{R}\in L^\infty_{w^*}(0,T;\M(\R^d;\R^{d\times d}_{\sym,+}))$. 
Using the disintegration of measures, we can write $ {\mathfrak{R}} = B_{x,t} \mu_t$ with $ \mu \in L^\infty_{w^*}(0,T; \mathcal{M}(\R^d))$ and $ B_t \in L^1(\R^{d}, \mu ; \R^{d\times d})$ for~a.e. $t\in(0,T)$, where we can choose $ \mu$ and $B$ such that $ \tr (B(x,t))=1 $. 
 
For $\mu$-a.e.~$(x,t)$, we observe that  $ B(x,t) $ is symmetric positive definite such that there exists a decomposition into eigenvalues $\lambda_i (x,t)\in [0,1]$ and orthonormal eigenvectors $\f e_i(x,t)$ for $i \in \{1,\ldots,d\}$  such that 
 $ B(x,t) = \sum_{i=1}^d \lambda_i(x,t) e_i(x,t)\otimes e_i(x,t) $.
 Now, we can define the measure 
 \begin{align*}
 \nu_{(x,t)} \in \mathcal{M}(\Se^{d-1}) \text{ via } \nu_{(x,t)}(s ) =\sum_{i=1}^d \frac{\lambda_i(x,t)}{2} \left( \delta_{e_i(x,t)}(s)+\delta_{-e_i(x,t)}(s) \right). 
 \end{align*}
and set $$ V \in L^\infty_{w^*}(0,T;\mathcal{M}(\R^d\times \Se^{d-1}) )\text{ as } V_t(s,x) = \delta_{\frac{\nabla \chi_{x,t} }{|\nabla\chi_{(x,t)}|} x,t} (s)  |\nabla \chi |(x,t)+ 
\nu_{x,t} (s) \mu (x,t)$$ in the sense of measures. 
We note that there would be many other  possible choices for the measure $\nu$. It is just required that $ \int_{\Se^{d-1}} s \de  \nu_{(x,t)} (s) =0 $ and $ \int_{\Se^{d-1}} s\otimes s  \de  \nu_{(x,t)} (s) = B(x,t)$. But there are infinitely many measures for a given expectation and variance~(\textit{cf}.~Lemma~\ref{lem:Young}).  

Since $ \nu_{(x,t)}$ is symmetric its expectation vanishes, which implies now that~\eqref{compatible} holds. 
Additionally, we infer from~\eqref{definmeas} that 
\begin{align*}
\int_{\R^d} \rho(\chi_0) \f v_0 \f \zeta(0) \de x +\int_0^T& \int_{\R^d} \rho(\chi) \f v \cdot \t \f \zeta + \rho(\chi) \f v \otimes \f v : \nabla \f \zeta  - \mu(\chi) (\nabla \f v )_{\sym} : \nabla \f \zeta \de x   \de t 
\\
={}& \sigma \int_0^T\int_{\R^d} \left ( I - \frac{\nabla \chi}{|\nabla \chi|}\otimes \frac{\nabla \chi}{|\nabla \chi|} \right ) : \nabla \f \zeta \de | \nabla \chi|  - \int_{\R^d} \nabla \f \zeta :\de \mathfrak{R}_t(x)  \de t 
\\
={}& - \sigma \int_0^T\int_{\R^d\times \Se^{d-1}} s \otimes s :\nabla \f \zeta \de V(s,x) \de t 
\\
={}&  \sigma \int_0^T\int_{\R^d\times \Se^{d-1}}\left ( I -  s \otimes s \right ):\nabla \f \zeta \de V(s,x) \de t \,,
\end{align*}
where we used  $ I : \nabla \f \zeta = \di \f \zeta =0$ two times. 
This equation is exactly~\eqref{TwoPhaseNav}. 

By choosing $A = \phi I $ in~\eqref{definmeas}$_2$ with $\phi \in \C([0,T])$ with $\phi \geq 0$ implies by the definition of~$p$ in~\eqref{defp} that 
\[ \int_0^T \phi \int_{\R^d} \tr [ \de \mathfrak{R}] \de t \leq \int_0^T\phi [ E - \E( \f v , \chi)]\de t\quad \text{ for all }\phi \in \C([0,T]) \text{ with }\phi \geq 0 \,. \]
We note that we eqip the space of matrices $ \R^{d\times d}_{\sym}$ with the spectral norm $|A|_2 := \max_{i\in \{1,\ldots,d\}} |\lambda_i|$, where $\lambda _i$ denotes the $i$-th eigenvalue of $A\in \R^{d\times d}_{\sym}$. The trace-norm is than the dual norm of the spectral norm in the Frobenius product, \textit{i.e.}, $ \tr(A) = \sum _{i=1}^d \lambda_i= \sum _{i=1}^d |\lambda_i| = \sup_{B \neq 0,\, |B|_2\leq 1 } A:B $ for all $A\in \R^{d\times d}_{\sym,+}$.  
From the choice of $B$, we infer that $ \tr[\mathfrak{R}] = \mu $. 
By the definition of $V$, we find that $|V_t| (\R^d\times \Se^d) = |\nabla \chi_t|(\R^d)  + \mu_t(\R^d)$, which implies that 
\begin{align*}
E \geq \E( \f v, \chi) + \int_{\R^d} \tr(\mathfrak{R}) &= \int_{\Omega} \frac{\rho(\chi)}{2} | \f v|^2 \de x + | \nabla \chi _t|(\R^d) + \mu_t(\R^d) \\& = \int_{\Omega} \frac{\rho(\chi)}{2} | \f v|^2 \de x + V( \R^d \times \Se^{d-1}) \\& =\E^V(\f v , \chi , V)    \,,
\end{align*}
where this energy was defined in~\eqref{enmeas}. 
Thus, Choosing $\f \eta\equiv 0$ and $ \varphi\equiv 0$ in~\eqref{envar:NavTwo2} with $s=0$ implies~\eqref{endisNavTwo} such that $(\f v, \chi, V)$ is indeed a varifold solution in the sense of Definition~\ref{def:vari}.

Let $(\f v , \chi , V)$ be a varifold solution in the sense of Definition~\ref{def:vari} such that there exists an $E\in \BV$ with $ E(0) = \E( \f v _0 , \chi_0)$ with $E\geq \E(\chi,\f v , V)$ a.e. on $(0,T)$ and~\eqref{eq:enintwo} is fulfilled. 
We subtract from~\eqref{eq:enintwo} the equations~\eqref{TwoPhaseNav} and~\eqref{eq:transport} and add the equations~\eqref{TwoPhaseNav} and~\eqref{eq:transport} with $t=s$. This leads to 
\begin{multline}\label{envar:NavTwo2}
\left [ E - \int_{\R^d} \rho(\chi)\f v \cdot \f \eta \de x - \int_{\R^d} \chi \varphi \de x \right ] \Big|_s^t + \int_s^t \int_{\R^d} \chi ( \t \varphi + ( \f v \cdot \nabla) \varphi) \de x \de t 
\\+ \int_s^t \int_{\R^d} \rho(\chi) \f v \cdot \t \f \eta + \rho(\chi) \f v \otimes \f v : \nabla \f \eta  +\mu(\chi) (\nabla \f v )_{\sym} : ( \nabla \f v- \nabla \f \eta) \de x   \de \tau 
\\
%
%
-\sigma  \int_s^t \int_{\mathbb{R}^d \times \mathbb{S}^{d-1}}(\operatorname{Id}-s \otimes s): \nabla \f \eta \mathrm{d} V_\tau (x, s)
\de \tau \leq 0 
\end{multline}
for all $ \varphi \in \C^1([0,T]; \C^\infty(\R^d))$, $\f \eta \in \C^1([0,T]; \C^\infty_{c,\sigma} (\R^d; \R^d ))$ and a.e.~$0\leq s<t\leq T $ including $s=0$ with $ \chi(0)= \chi_0 $, $ \f v (0) = \f v_0$ and $E(0)= \E(\f v_0, \chi_0)$.
We observe that 
\begin{align*}
&-\int_{\R^d\times \mathbb{S}^{d-1}}(\operatorname{Id}-s \otimes s): \nabla \f \eta \mathrm{d} V_\tau (x, s)\de x  \\&{}= \int_{\R^d} \left ( 
\frac{\nabla \chi}{|\nabla \chi|}\otimes \frac{\nabla \chi}{|\nabla \chi|} \right ) : \nabla \f \eta \de | \nabla \chi|(x)
\\&\quad +\int_{\R^d\times \mathbb{S}^{d-1}}(
s \otimes s): \nabla \f \eta \left ( \mathrm{d} V_\tau (x, s) - \de \left(\delta_{\frac{\nabla \chi}{|\nabla \chi|}}(s) |\nabla \chi|(x)\right)\right )
\\
&{}\geq \int_{\R^d} \left ( 
\frac{\nabla \chi}{|\nabla \chi|}\otimes \frac{\nabla \chi}{|\nabla \chi|} \right ) : \nabla \f \eta \de | \nabla \chi|
-\|( \nabla \f \eta)_{\sym,-} \|_{L^\infty(\R^d;\R^{d\times d})} \left ( | V_\tau|(\R^d\times \mathbb{S}^{d-1}) -   |\nabla \chi|(\R^d)\right )
\\
&{}\geq -\int_{\R^d} \left ( \text{Id}- \frac{\nabla \chi}{|\nabla \chi|}\otimes \frac{\nabla \chi}{|\nabla \chi|} \right ) : \nabla \f \eta \de | \nabla \chi|
+\|( \nabla \f \eta)_{\sym,-} \|_{L^\infty(\R^d;\R^{d\times d})} \left  [ \E( \f v ,\chi) - E \right ]\,.
\end{align*}
We used that $| V_\tau|(\R^d\times \mathbb{S}^{d-1}) -   |\nabla \chi|(\R^d) = \E_V(\f v , \chi , V) - \E( \f v , \chi) \leq E - \E( \f v , \chi) $.
Inserting this inequality into~\eqref{envar:NavTwo2}  implies~\eqref{envar:NavTwo}, which proves the assertion. 
\end{proof}

\section{Quasilinear wave equation with convex energy}\label{sec:2}

In this section we consider the quasi-linear wave equation proposed in~\cite{DemouliniWeakStrong}
\begin{equation}
\frac{\partial^2 \f y}{\partial t^2}=\nabla \cdot \f  W (\nabla \f y)
\label{eq:elasticity}
\end{equation}
where $\f y: \Td \times [0,T] \rightarrow \mathbb{R}^d$ and $\f W= D G $ is the gradient of a strictly convex function $G:\R^{d \times d} \rightarrow[0, \infty)$, about which we make the following hypotheses:
\begin{hypothesis}\label{hypo:elast}
We assume that $G \in \C^3(\R^{d \times d} ;[0, \infty))$ such that 
\begin{itemize}

\item
$\exists m<M \in \R$ with  $m|\f Z|^2 \leqq D^2 G({\f F})[\f Z, \f Z] \leqq M|\f Z|^2, \text{ for all } \f F,\f Z,  \in \R^{d \times d},$
\item
$G(\f F)=g_0(\f F)+\frac{1}{2}|\f F|^2$ where $\lim _{|\f F| \rightarrow \infty} \frac{g_0(\f F)}{1+|\f F|^2}=0$,
\item
$\lim _{|\f F| \rightarrow \infty} \frac{\left|D G(\f F)\right|}{1+|\f F|^2}=0$,
\item
 $\left|D^3 G(\f F)\right| \leqq M$, for some $M>0$.
 \item $ G(\f 0 ) = 0 $ and $ G( \f F) \geq 0 $ for all $ \f F \in \R^{d\times d}$.

\end{itemize}
\end{hypothesis}
We note that under these assumptions imply that the mapping 
\begin{equation}\label{ass:convex}
\f Z \mapsto D G (F) : A + \frac{M}{m} | A|_2 G(\f F) \quad \text{is convex for all }A\in \R^{d\times d}\,.
\end{equation}
This is assured by calculating the second derivative of~\eqref{ass:convex} and using Hypothesis~\ref{hypo:elast}  the first and fourth bullet.

If $\f y$ is interpreted as a displacement vector, this equation could be
regarded as a model for elastodynamics, but the assumption of convexity is known to be physically unrealistic~\cite{rindler}. 
Introducing the notation $\f v =\partial_t \f y $ and $\f F =\nabla \f  y $, a  solution to~\eqref{eq:elasticity} in first order form consists of a pair $(\f v,\f  F)$  which solve
\begin{equation}\label{eq:elastCons}
\t \f v   =\di ( D G (\f F))\,, \qquad
\t \f F   =\nabla \f v \,, \qquad \partial_t \eta(\f v , \f F)+\di \f q(\f v, \f F) =0\,.
\end{equation}
A smooth solution fulfilling the first two equations, will automatically satisfy the third equation with 
 $\eta(\f v,\f  F)=\frac{1}{2}|\f v|^2+G(\f F)$ and $\f q (\f v, \f F)=( D G( \f F))^T \f v $, and we prescribe the initial data $ \f v (0) = \f v_0$ and $\f F(0)=\f F_0$ in $\Td$.

 We define $$ \V := L^2(\Td; 
 \R^d ) \times L^2(\Td; \R^{d\times d})\text{ and } \E : \V \to \R \text{ via } \E(\f v , \f F ) := \int_{\Td} \frac{1}{2} | \f v |^2 + G(\f v , \f F)\de x \,.$$ 
\begin{definition}[measure-valued solution]\label{def:elastmeas}
 A measure-valued solution to~\eqref{eq:elasticity} with initial data $\left(\f v _0,\f  F_0\right) \in\V $ consists of a pair $(\f v,\f  F) \in L^{\infty}(0.T;\V)$ and a Young measure  $\nu =\nu_{x, t}$, which is a 
family  of probability measures, \textit{i.e.,}
$$
\{ \nu_{(x,t)} \} \subset \mathcal{P}( \R^d \times \R^{d\times d} )\,, \text{ a.e.~in } \Td \times (0,T)
$$
 such that 
 \begin{subequations}
 \begin{itemize}
\item the equations~\eqref{eq:elastCons} are fulfilled in a measure-valued sense  
  \begin{align}
 \int_{\Td} \f v _0 \cdot \f \psi  \mathrm{d} x+\int_0^T \int_{\Td}  \f v \cdot \t \f \psi  \mathrm{~d} x \mathrm{~d} t&{}=\int _0^T \int _{\Td} \left\langle \nu_{(x,t)}, D G(\f S) : \nabla \f \psi \right\rangle  \mathrm{d} x \mathrm{~d} t\label{eq:elastmeas1} \\
 \int _{\Td} \f F_0 :\Psi  \mathrm{d} x+\int_0^T \int_{\Td}  \f F : \partial_t \Psi  \mathrm{~d} x \mathrm{~d} t&{} =\int_0^T \int_{\Td}  \f v \cdot  \di  \Psi  \mathrm{d} x \mathrm{~d} t,\label{eq:elastmeas2}
\end{align}
for all test functions $\psi  \in C_c^1\left( \Td \times (0,T) ; \R^d \right)$ and $\Psi  \in C_c^1\left( \Td \times (0,T) ; \R^{d\times d} \right)$,
\item the compatibility condition 
\begin{equation}
\int_{\Td} \f v(t) \cdot \f \psi \de x = \int_{\Td} \langle \nu _{(x,t)}, \f s\cdot \f \psi \rangle \de x  \qquad  \int_{\Td} \f F(t) : \Psi  \de x = \int_{\Td} \langle \nu _{(x,t)}, \f S\cdot \Psi  \rangle \de x
\end{equation}
for all test functions $\psi  \in C_c^1\left( \Td \times (0,T) ; \R^d \right)$ and $\Psi  \in C_c^1\left( \Td \times (0,T) ; \R^{d\times d} \right)$ and a.e.~$t\in(0,T)$,
\item  there exists a non-negative Radon measure $\boldsymbol{\gamma} \in L^\infty_{w^*}(0,T; \mathcal{M}^{+}(\Td ))$ such that 
\begin{align}\label{eq:elastmeas3}
\int_0^T\t \theta \left(\int_{\Td}\left\langle\nu _{x, t}, \frac{1}{2}| \f s | ^2 +G( \f S) \right\rangle \mathrm{d} x +\boldsymbol{\gamma}_t(\mathrm{d} x)\right)\mathrm{~d} t+ \theta(0) \E\left(\f v_0,\f  F_0\right) \mathrm{d} x \geq 0,
\end{align}
for all non-negative functions $\theta(t) \in C_c^1([0, T))$.
 \end{itemize}
 \end{subequations}
\end{definition}
With the usual abuse of notation, we define the 
 dual pairings for the young measure $\nu$ to be defined as 
\[
\langle \nu_{(x,t)} , g(x, t, \f s , \f S) \rangle := \int_{\R^d\times \R^{d\times d}} g(x,t, \f s , \f S) \de \nu_{(x,t)}(\f s , \f S) \,
\]
for all $ g \in \C(\Td \times [0,T]\times \R^d\times \R^{d\times d}) $ fulfilling 
$\lim _{(|\f v |+|\f F| )\rightarrow \infty} \frac{|g(x,t,\f v, \f F)|}{1+|\f v|^2+ |\f F|^2}< \infty $
for a.e.~$ (x,t) \in \Td\times (0,T)$.  
The dual pairing of a Young measure with a function should be interpreted as the Young measure $\nu $ in the dual pairing with the continuous function $(\f s , \f S) \mapsto h(\f s , \f S)$
for
 $ h :\C(  \R^{d}\times \R^{d\times d} ;\R)$ .

\begin{definition}\label{def:elastenvar}
A triple $ (\f v , \f F , E)$ is called an energy-variational solution to~\eqref{eq:elasticity} with initial values $(\f v _0 , \f F_0)$, if 
\begin{subequations}
\begin{align}
\f v \in L^\infty (0,T; L^2( \Td  ;\R^d)) \,,\quad
\f F \in L^\infty (0,T; L^2(\Td;\R^{d\times d })\,, \quad E \in \BV
\end{align}
such that $ E \geq \E( \f v ,\f F)$ for a.e. $t\in [0,T]$ as well as 
\begin{multline}\label{envarin:elast}
\left  [ E- \int_{\Td} \f v \cdot \f \varphi + \f F: \Psi  \de x \right ] \Bigg|_s^t + \int_s^t \int_\Omega \f v \cdot \t \f \varphi + \f F : \t \Psi  \de x \de \tau 
\\
- \int_s^t \int_{\Td} \frac{\partial G}{\partial \f F }( \f F) : \nabla \varphi + \f v \cdot \di \Psi  \de x \de \tau  + \int_s^t \frac{M}{m} \| \nabla \f \varphi \|_{L^2(\Td;\R^{d\times d})} \left [ \E(\f v ,\f F)- E\right ] \de \tau \leq 0 
\end{multline}
for a.e.~$T\geq t > s \geq 0$ including $s=0$ with $ ( \f v (0), \f F(0)) =( \f v_0, \f F_0)$, $E(0)= \E(\f v_0, \f F_0)$, and all $( \f\varphi ,\Psi  ) \in \C^1(\Td \times [0,T] ; \R^d \times \R^{d\times d})$. 
\end{subequations}
\end{definition}
\begin{theorem}\label{thm:mainelast}
Let $(\f v , \f F, E)$ be an energy-variational solution according to Definition~\ref{def:elastenvar}. Then there exists a young measure $\nu $ such that $(\f v , \f F ,\nu)$ is a measure-valued solution according to Definition~\ref{def:elastmeas}. 

Let $ (\f v, \f F, \nu)$ be a measure-valued solution, where~\eqref{eq:elastmeas3} is replaced by: there exists $E:[0,T]\to \R$ non-increasing such that $E(0) = \E(\f v_0, \f F_0)$ and 
\begin{subequations}\label{eq:enmeaselasat}
\begin{equation}\label{eq:enmeaelast1}
E(t) \geq  \int_{\Omega} \left \langle \nu_{x,t} , \frac{1}{2}|\f s|^2 + G(\f F) \right  \rangle \de x \quad \text{for a.e. }t\in(0,T)\,.
\end{equation}
as well as 
\begin{align}\label{est:nuelast}
\left \| \langle \nu
, D G(\f S) \rangle  - D G (\f F
) \right \|_{L^2(\Td;\R^{d\times d})} \leq  \frac{M}{m} \left [E(t) - \E(\f v(t) , \f F(t))\right ]\,.
\end{align}
\end{subequations}
Then $(\f v, \f F, E) $ is an energy-variational solution according to Definition~\ref{def:elastenvar}. 
\end{theorem}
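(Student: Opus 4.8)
The plan is to follow the template of the preceding theorem on the two-phase Navier--Stokes system: for the implication from energy-variational to measure-valued solution I extract a flux defect from \eqref{envarin:elast} by Hahn--Banach and realize it through a Young measure via Lemma~\ref{lem:Young}, while the converse follows by substituting the measure-valued identities into \eqref{envarin:elast} and absorbing the defect with the estimate~\eqref{est:nuelast}. For the first direction I begin by testing \eqref{envarin:elast} with $\f\varphi\equiv 0$ and $\Psi=\tilde\Psi/\alpha$, multiplying by $\alpha$ and sending $\alpha\searrow0$; since $\|\nabla\f\varphi\|_{L^2}=0$ the dissipation term drops out, and using both signs of $\tilde\Psi$ I recover the linear identity \eqref{eq:elastmeas2} for $\f F$, which involves only the first moment $\f v$ and hence needs no defect. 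Testing instead with $\Psi\equiv 0$, $\f\varphi=\f\zeta/\alpha$ and again letting $\alpha\searrow0$ yields
\[
-\int_{\Td}\f v\cdot\f\zeta\,\de x\Big|_s^t+\int_s^t\!\int_{\Td}\big(\f v\cdot\t\f\zeta-D G(\f F):\nabla\f\zeta\big)\,\de x\,\de\tau+\int_s^t\tfrac{M}{m}\|\nabla\f\zeta\|_{L^2}\big[\E(\f v,\f F)-E\big]\,\de\tau\le 0 .
\]
Collecting the first terms (with $s=0$, $t=T$) into a linear functional $\f l$ acting on $\nabla\f\zeta$, the remaining term is the Minkowski functional $\mathfrak p(A)=\int_0^T\tfrac{M}{m}\|A\|_{L^2}[E-\E]\,\de\tau$. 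Since $\f l$ vanishes on $\{\nabla\f\zeta=0\}$ (the two-sign argument on spatially constant $\f\zeta$ gives conservation of total momentum, consistent with the divergence form of the flux), $\f l$ factors through the gradient and Lemma~\ref{lem:hahn} applies on the torus, producing $\mathfrak R\in L^\infty(0,T;L^2(\Td;\R^{d\times d}))$ with $\langle\f l,\f\zeta\rangle=-\int_0^T\!\int_{\Td}\mathfrak R:\nabla\f\zeta$ and the pointwise bound $\|\mathfrak R(t)\|_{L^2}\le\tfrac{M}{m}[E(t)-\E(t)]$.

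Next I realize $\mathfrak R$ as the flux defect of a Young measure. Because $D G$ depends only on $\f S$, it suffices to construct a measure on $\R^{d\times d}$ and tensor it with $\delta_{\f v}$ in the velocity variable, so that the $\f s$-marginal carries no variance and the compatibility conditions hold with first moments $\f v$ and $\f F$. I would therefore apply Lemma~\ref{lem:Young} with $n=d^2$, $\eta=G$, $g=D G$, barycenter $\f y_0=\f F$, prescribed defect $\f z_0=\mathfrak R$ and $x_0=[E-\E]-\|\mathfrak R\|_{W^\ast}$, obtaining $\nu$ and a nonnegative $\gamma$ with $\langle\nu,D G(\f S)\rangle=D G(\f F)+\mathfrak R$. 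This turns the identity for $\f l$ into exactly \eqref{eq:elastmeas1}, while \eqref{moments2} gives total measure-valued energy $\E(\f v,\f F)+\|\mathfrak R\|_{W^\ast}+x_0=E$, which is non-increasing with $E(0)=\E(\f v_0,\f F_0)$ (monotonicity of $E$ itself follows from \eqref{envarin:elast} with $\f\varphi\equiv\Psi\equiv0$); integrating by parts against $\t\theta$ then yields \eqref{eq:elastmeas3}.

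For the converse I localize the measure-valued identities \eqref{eq:elastmeas1} and \eqref{eq:elastmeas2} on $[s,t]$ by approximating the indicator of $[s,t]$ by smooth temporal cut-offs, which gives
\[
\int_{\Td}\f v\cdot\f\varphi\,\de x\Big|_s^t=\int_s^t\!\int_{\Td}\big(\f v\cdot\t\f\varphi-\langle\nu,D G(\f S)\rangle:\nabla\f\varphi\big)\,\de x\,\de\tau,\quad
\int_{\Td}\f F:\Psi\,\de x\Big|_s^t=\int_s^t\!\int_{\Td}\big(\f F:\t\Psi-\f v\cdot\di\Psi\big)\,\de x\,\de\tau .
\]
Inserting these into the left-hand side of \eqref{envarin:elast}, all test-function terms cancel and, writing $D G=\partial G/\partial\f F$, one is left with $E\big|_s^t+\int_s^t\int_{\Td}(\langle\nu,D G(\f S)\rangle-D G(\f F)):\nabla\f\varphi+\int_s^t\tfrac{M}{m}\|\nabla\f\varphi\|_{L^2}[\E-E]$. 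Bounding the middle integral by Cauchy--Schwarz and \eqref{est:nuelast} gives $\le\int_s^t\tfrac{M}{m}\|\nabla\f\varphi\|_{L^2}[E-\E]$, which cancels the last term exactly, so the whole expression is $\le E\big|_s^t\le 0$ since $E$ is non-increasing; this is precisely \eqref{envarin:elast}.

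The main obstacle is the energy bookkeeping in the first direction: Lemma~\ref{lem:Young} charges the flux defect an energy cost $\|\mathfrak R\|_{W^\ast}$, and for $x_0\ge0$ one must guarantee $\|\mathfrak R\|_{W^\ast}\le E-\E$, whereas Hahn--Banach only delivers the $L^2$ bound $\|\mathfrak R\|_{L^2}\le\tfrac{M}{m}[E-\E]$ with $M/m>1$, in a norm a priori incompatible with the reflexive space $W$ required by Lemma~\ref{lem:Young}. Reconciling these is exactly where the structural convexity \eqref{ass:convex} enters: the map $\f F\mapsto D G(\f F):A+\tfrac{M}{m}|A|_2\,G(\f F)$ being convex quantifies, via Jensen's inequality, the energetic price of a prescribed flux perturbation, and the constant $\tfrac{M}{m}$ together with the $L^2$-type dissipation chosen in \eqref{envarin:elast} are tuned so that the factor $M/m$ in the flux bound and the matching factor $m/M$ in the energy cost cancel; the delicate point is to turn the resulting lower bound on the cost into an achievability statement placing the extracted $\mathfrak R$ inside the energy budget $E-\E$, so that $x_0\ge0$ and the constructed measure-valued energy equals $E$.
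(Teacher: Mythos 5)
Your proposal is correct and follows essentially the same route as the paper: extract the flux defect $\mathfrak R$ by Hahn--Banach (Lemma~\ref{lem:hahn}), realize it through a Young measure via Lemma~\ref{lem:Young} with $\eta(\f s,\f S)=\frac12|\f s|^2+G(\f S)$ and $g=DG$, and for the converse insert the measure-valued identities into \eqref{envarin:elast} and absorb the defect term using Cauchy--Schwarz and \eqref{est:nuelast}. The ``delicate point'' you flag at the end is resolved exactly as you suspect: the paper equips $W=L^2(\Td;\R^{d\times d})$ with the rescaled norm $\|\cdot\|_W=\frac{M}{m}\|\cdot\|_{L^2}$, so that $\|\mathfrak R\|_{W^*}=\frac{m}{M}\|\mathfrak R\|_{L^2}\le E-\E(\f v,\f F)$ follows directly from the Hahn--Banach bound, making your choice of $x_0\ge 0$ admissible (the paper itself simply takes $x_0=E-\E(\f v,\f F)$).
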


\begin{proposition}\label{prop:exelast}
Let  $G:\R^{d \times d} \rightarrow[0, \infty)$ be a strictly convex twice continuously differentiable function fulfilling Hypothesis~\ref{hypo:elast} and let~\eqref{ass:convex} be fulfilled. 
 Then there exists an energy-variational solution in the sense of Definition~\ref{def:elastenvar}. 
\end{proposition}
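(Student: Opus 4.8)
The plan is to obtain the energy-variational solution as the limit of a regularized approximation and then to invoke the implication ``measure-valued $\Rightarrow$ energy-variational'' already contained in Theorem~\ref{thm:mainelast}. Concretely, I would first add an artificial viscosity to the momentum balance and study the parabolic system $\t\f v_\varepsilon = \di(DG(\f F_\varepsilon)) + \varepsilon\Delta\f v_\varepsilon$, $\t\f F_\varepsilon = \nabla\f v_\varepsilon$ on $\Td\times(0,T)$ with the given initial data, for $\varepsilon>0$. Since $G\in\C^3$ has a bounded and uniformly positive Hessian (Hypothesis~\ref{hypo:elast}), a Faedo--Galerkin discretization in space combined with standard ODE theory produces approximate solutions, and testing the first equation with $\f v_\varepsilon$ and the second with $DG(\f F_\varepsilon)$ yields the exact energy balance $\frac{\de}{\de t}\E(\f v_\varepsilon,\f F_\varepsilon) = -\varepsilon\|\nabla\f v_\varepsilon\|_{L^2}^2 \le 0$. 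This gives, uniformly in $\varepsilon$, the bounds $\f v_\varepsilon,\f F_\varepsilon\in L^\infty(0,T;L^2)$ with $\E(\f v_\varepsilon,\f F_\varepsilon)(t)\le\E(\f v_0,\f F_0)$, together with $\sqrt\varepsilon\,\nabla\f v_\varepsilon\in L^2$.

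Second, I would extract limits. Because the a priori bounds only control $G(\f F_\varepsilon)$ and $DG(\f F_\varepsilon)$ in $L^1$ (the growth of $G$ is quadratic and no equi-integrability is available), the weak-$*$ limits of the nonlinear quantities need not be the corresponding functions of the weak limit; here the compactification machinery of Lemma~\ref{lem:Young} applies with the choices $\eta\sim G$ and $g=DG$ (note $\lim_{|\f F|\to\infty}|DG(\f F)|/(1+|\f F|^2)=0$ by Hypothesis~\ref{hypo:elast}), producing a Young measure $\nu=\nu_{(x,t)}\in\mathcal P(\R^d\times\R^{d\times d})$ and a nonnegative concentration measure $\gamma$. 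Setting $\f v=\langle\nu,\f s\rangle$ and $\f F=\langle\nu,\f S\rangle$ (the linear moments carry no concentration by the same growth gap), passing to the limit in the linear equations and in the weakly converging nonlinear flux gives the measure-valued balances~\eqref{eq:elastmeas1}--\eqref{eq:elastmeas2} together with the compatibility conditions, so that $(\f v,\f F,\nu)$ solves the equations and compatibility conditions of Definition~\ref{def:elastmeas}.

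Third, I would upgrade the energy information to the form required by Theorem~\ref{thm:mainelast}. The maps $t\mapsto\E(\f v_\varepsilon,\f F_\varepsilon)(t)$ are non-increasing and bounded, so Helly's selection theorem yields a non-increasing limit $E\in\BV$ with $E(0)=\E(\f v_0,\f F_0)$ and $E(t)\ge\int_{\Td}\langle\nu_{x,t},\tfrac12|\f s|^2+G(\f S)\rangle\de x + \gamma_t(\Td)\ge\E(\f v,\f F)(t)$, which is~\eqref{eq:enmeaelast1}. The defect estimate~\eqref{est:nuelast} is exactly the point where the convexity hypothesis~\eqref{ass:convex} enters: applying Jensen's inequality for $\nu_{(x,t)}$ to the convex map $\f F\mapsto DG(\f F):A+\tfrac Mm|A|_2 G(\f F)$ gives, pointwise, $(\langle\nu,DG(\f S)\rangle - DG(\f F)):A\ge -\tfrac Mm|A|_2(\langle\nu,G(\f S)\rangle - G(\f F))$ for every matrix $A$; optimizing over $A$ bounds the flux defect by the energy defect, and integrating together with $E\ge\int_{\Td}\langle\nu,\tfrac12|\f s|^2+G(\f S)\rangle\de x$ produces~\eqref{est:nuelast}. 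With~\eqref{eq:enmeaselasat} in hand, Theorem~\ref{thm:mainelast} directly yields that $(\f v,\f F,E)$ is an energy-variational solution.

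The main obstacle is the second step, namely the passage to the limit in the nonlinear flux $DG(\f F_\varepsilon)$ in the absence of strong compactness and equi-integrability: oscillations and, more seriously, concentrations may occur, and one must verify that the concentration part generated in the energy dominates that generated in the flux. This is guaranteed precisely by the growth gap $|DG(\f F)|/(1+|\f F|^2)\to 0$ together with~\eqref{ass:convex}, which is what makes Lemma~\ref{lem:Young} applicable and the defect estimate~\eqref{est:nuelast} survive the limit; the remaining constant- and norm-bookkeeping in deriving~\eqref{est:nuelast} from the pointwise Jensen inequality is routine. As an alternative one could build the energy-variational solution directly by the time-incremental convex minimization (minimizing-movements) scheme of~\cite{visco}, avoiding Young measures altogether, but the viscous route above has the advantage of being self-contained within the tools developed here.
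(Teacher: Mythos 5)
Your route is genuinely different from the paper's: the paper does not go through a viscous approximation or through measure-valued solutions at all, but verifies the hypotheses of the abstract existence result \cite[Thm.~3.3]{visco} (a time-incremental minimizing-movements scheme), where the convexity condition~\eqref{ass:convex} enters as convexity of $(\f v,\f F)\mapsto -\langle A(\f v,\f F),(\f\varphi,\Psi)\rangle+\mathcal K(\f\varphi,\Psi)\E(\f v,\f F)$ and guarantees weak lower semicontinuity of the energy-variational inequality along the approximation. What you sketch as an ``alternative'' in your last sentence is in fact the paper's entire proof.

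The route you actually carry out has a genuine gap at the step where you derive the defect estimate~\eqref{est:nuelast}, and this estimate is exactly what the backward direction of Theorem~\ref{thm:mainelast} needs. Jensen's inequality applied to the convex map $\f F\mapsto DG(\f F):A+\tfrac Mm|A|_2G(\f F)$ (for $A$ and $-A$) yields only the \emph{pointwise} bound
\begin{equation*}
\sup_{|A|_2\le 1}\bigl(\langle\nu_{x,t},DG(\f S)\rangle-DG(\f F(x,t))\bigr):A\;\le\;\tfrac Mm\,h(x,t),\qquad h:=\langle\nu_{x,t},G(\f S)\rangle-G(\f F(x,t))\ge 0 .
\end{equation*}
Integrating this controls the flux defect in $L^1(\Td)$ by $\tfrac Mm\|h\|_{L^1(\Td)}\le\tfrac Mm[E-\E(\f v,\f F)]$, but~\eqref{est:nuelast} demands the $L^2(\Td)$ norm of the defect bounded \emph{linearly} by $E-\E(\f v,\f F)$, and $\|h\|_{L^2}\not\le\|h\|_{L^1}$; ``optimizing over $A$'' only works against $L^\infty$ test fields, not $L^2$ ones. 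Using instead the global Lipschitz bound $|DG(\f S)-DG(\f F)|\le M|\f S-\f F|$ together with $m$-strong convexity gives $|\langle\nu,DG\rangle-DG(\f F)|^2\le \tfrac{2M^2}{m}h$ pointwise, hence only a square-root bound $\|\langle\nu,DG\rangle-DG(\f F)\|_{L^2}\lesssim[E-\E]^{1/2}$. In the paper's forward direction of Theorem~\ref{thm:mainelast} the linear $L^2$ bound holds \emph{by construction}: the defect $\mathfrak R$ is produced by a Hahn--Banach extension of a functional that is already dominated by $\mathfrak p(\f A)=\tfrac Mm\|\f A\|_{L^2}[E-\E]$. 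For a defect arising from an actual vanishing-viscosity limit there is no reason it satisfies this bound, so your reduction to the converse direction of Theorem~\ref{thm:mainelast} does not close. (A smaller inaccuracy: the growth gap $|DG(\f F)|=o(1+|\f F|^2)$ together with the uniform $L^2$ bound on $\f F_\varepsilon$ \emph{does} give equi-integrability of $DG(\f F_\varepsilon)$ by de la Vall\'ee-Poussin, contrary to your remark; this helps you, but it does not repair the norm mismatch above.)
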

\begin{remark}
The existence of measure-valued solutions was claimed in~\cite{DemouliniWeakStrong} but under the hypothesis (1--4). There also the weak-strong uniqueness result was proven to hold.  

This existence and weak-strong uniqueness result for energy-variational solutions follows  from an abstract existence result in~\cite{visco}, which will be shown below. The abstract result from~\cite{unsere} could also be used, since the flat torus $\Td$ is considered.  
We note that the abstract  assumptions in Proposition~\ref{prop:exelast} are relaxed in comparison the existence result for measure-valued solutions. A weak-strong uniqueness could also be shown under the Hypothesis~\ref{hypo:elast} for sufficiently regular smooth solutions in~\cite[Cor.~3.7]{visco}. 

\end{remark}

\begin{proof}[Proof of Theorem~\ref{thm:mainelast}]
Let $ (\f v , \f F , E)$ be an energy-variational solution according to Definition~\ref{def:elastenvar}. Choosing $ \f \varphi \equiv 0$ and $ \Psi  = \frac{1}{\alpha} \f \psi$ for $\alpha > 0 $ and $ \f \psi \in \C^1(\Td\times [0,T]; \R^{d\times d })$ in~\eqref{envarin:elast} and multiplying the resulting inequality by $ \alpha \in\R$ implies in the limit of $\alpha \searrow 0$ that the weak formulation~\eqref{eq:elastmeas2} is fulfilled. 
On the other hand, choosing $ \f \varphi = \frac{1}{\alpha } \f \phi $ for $ \alpha>0 $ and $ \f \phi \in \C^1(\Td\times [0,T]; \R^d )$ and $ \Psi  \equiv 0$ and mulltiplying the resulting inequality by $ \alpha$, we infer in the limit~$\alpha \searrow 0$ the inequality
\begin{align*}
- \int_{\Td} \f v \cdot \f \phi  \de x \Big|_0^T + \int_0^T \int_\Omega \f v \cdot \t \f \phi  - \frac{\partial G}{\partial \f F }( \f F) : \nabla \phi\de x + \frac{M}{m} \| \nabla \f \phi \|_{L^2(\Td;\R^{d\times d})} \left [ \E(\f v ,\f F)- E\right ] \de t \leq 0 \,
\end{align*}
for all 
$ \f \phi \in \C^1(\Td\times [0,T]; \R^d )$ and a.e.~$s<t\in (0,T)$. 
Defining $\mathcal{V}:= \mathcal{U}$ with 
\begin{align*}
\f l: \mathcal{U} \to \R \qquad \langle \f l , \f \varphi \rangle := - \int_{\Td} \f v \cdot \f \phi  \de x \Big|_0^T + \int_0^T \int_\Omega \f v \cdot \t \f \phi  - \frac{\partial G}{\partial \f F }( \f F) : \nabla \phi\de x\de t  
\end{align*}
as well as 
\begin{align*}
\mathfrak{p}: L^1(0,T; L^2(\Omega ; \R^{d\times d}) \qquad \mathfrak{p}(\f A):= \frac{M}{m} \| \f A \|_{L^2(\Td;\R^{d\times d})} \left [ E- \E(\f v ,\f F)\right ]\,,
\end{align*}
we may apply Lemma~\ref{lem:hahn} in order to infer that there exists 
$\mathfrak R \in L^\infty (0,T;L^2(\Td ; \R^{d\times d}
)) $
such that $\langle - \mathfrak{R}, \f A \rangle \leq \mathfrak{p}(\f A ) $ and $\langle \f l , \f \varphi \rangle = \langle - \mathfrak{R}, \f \varphi \rangle $. 
With the definition of $\f l$ and we find that
\begin{align}\label{linineq}
- \int_{\Td} \f v \cdot \f \phi  \de x \Big|_0^T + \int_0^T \int_\Omega \f v \cdot \t \f \phi  - \frac{\partial G}{\partial \f F }( \f F) : \nabla \phi\de x\de t  + \int_0^T \int_{\Omega} \nabla \f \phi :  \mathfrak{R}\de x  \de t = 0 \,.
\end{align}
Additionally, taking the supremum in $\f A$  we observe that  \[ \| \mathfrak{R}(t) \| _{L^2(\Td; \R^{d\times d} )} \leq \frac{M}{m} \left [E(t) - \E(\f v(t) , \f F(t))\right ]\,.\]

Now, we want to construct the Young measure $\nu$ via Lemma~\ref{lem:Young}.
%
Therefore, we define 
 $g:\R^ d \times  \R^{d\times d} \to \R^{d\times d}$ via $ g ( \f s , \f S  ) := \frac{\partial G}{\partial \f F}(\f S)$ as well as $\eta (\f s , \f S) = \frac{1}{2}| \f s |^2 + G( \f S)$ and observe that $\eta$ fulfills all requirements of Lemma~\ref{lem:Young} thanks to Hypothesis~\ref{hypo:elast}. 
 The growth of $\eta$ allows to identify with $ \Omega = \Td$ that $ V = L^2(\Td; \R^d\times \R^{d\times d})$. We define $ 
  W : = L^2(\Td;\R^{d\times d})$ and   equip it with the norm with $\| \f S\|_{W}  =\frac{M}{m}\|\f S\|_{L^2(\Omega;\R^{d\times d})}  $. 
 Since $G: W \to [0,\infty)$ is a convex lower semicontinuous function that is coercive, its subdifferential, which coincides with the standard differential, which is $ g : W \to W^*=W$ is surjectiv. 
Considering Hypothesis~\ref{hypo:elast} it is a standard matter to verify that the bound~\eqref{bound} is fulfilled.
 
 We apply Lemma~\ref{lem:Young} for $ \f y_0 = (\f v , \f F)$, $ \f z _0 = \mathfrak{R}$ and $ x_0 = E-\E(\f v, F)$. 
  According to Lemma~\ref{lem:Young} there exists a $ \nu \in L^\infty_{w^*}(0,T;L^1(\Omega)\times \mathcal{M}(\R^d\times \R^{d\times d}))$ and a $\gamma \in L^\infty_{w^*}(0,T;\mathcal{M}^+(\Omega))$  such that 
\begin{align*}
1 =  \langle  \nu_{x,t} , 1\rangle  \,, \quad 
(\f v(x,t), \f F(x,t)) =\langle  \nu_{x,t} , (\f s , \f S)\rangle  \,, \quad \text{and } \\\quad  \frac{\partial G}{\partial \f F}(\f F(x,t))  - \mathfrak{R}(x,t)= \langle \nu_{x,t},  \frac{\partial G}{\partial \f F}\rangle  
\end{align*} 
a.e.~in $\Omega \times (0,T)$ as well as 
\begin{align*}
\E(\f v , \f F) + \frac{m}{M}\| \mathfrak{R}\|_{L^2(\Omega;\R^{d\times d})}  + E-\E(\f v, F) = \int_{\Td} \left  \langle \nu , \frac{1}{2}|\f s |^2 + G( \f S) \right \rangle  \de x + \int_{\Td} \de \gamma_t(x) \,.
\end{align*}
This inserted into~\eqref{linineq} implies the formulation~\eqref{eq:elastmeas1}. The monotonicity of $E$, stemming from choosing $\f \varphi \equiv \f 0$ as well as $\Psi  \equiv \f 0$ in~\eqref{envarin:elast},  then implies~\eqref{eq:elastmeas3}.

This proves the first assertion of  Theorem~\ref{thm:mainelast}.
We note that due to the estimate on~$\mathfrak{R}$, and its identification, we find that the estimate~\eqref{est:nuelast} holds.
Thus, $ (\f v , \f F , \nu)$ is a measure-valued solution according to Definition~\ref{def:elastmeas}. 

Let $ (\f v, \f F, \nu)$ be a measure-valued solution, where~\eqref{eq:elastmeas3} is replaced by: there exists $E:[0,T]\to \R$ non-increasing such that $E(0) = \E(\f v_0, \f F_0)$ and~\eqref{eq:enmeaselasat} is fulfilled.

Due to the convexity of $\eta$, we infer from Jensen's inequality and~\eqref{eq:enmeaelast1}  that
\begin{equation}
E(t) \geq \int_{\Td} \langle \nu_{x,t}, \eta \rangle \de x \geq \int_\Omega \eta \left ( \langle \nu_{x,t}, \f A \rangle \right ) \de x = \E(\f v (t), \f F(t)) \quad \text{for a.e. } t\in (0,T)\,.
\end{equation}
Now, we subtract from $ E(t) - E(s) \leq 0$ the two equations~\eqref{eq:elastmeas1} and~\eqref{eq:elastmeas2} and add as well as subtract the term $ \frac{\partial G}{\partial \f F}(\f F)$
 such that 
\begin{multline*}
\left  [ E- \int_{\Td} \f v \cdot \f \varphi + \f F: \Psi  \de x \right ] \Big|_s^t + \int_s^t \int_\Omega \f v \cdot \t \f \varphi + \f F : \t \Psi  \de x \de \tau 
\\
- \int_s^t \int_{\Td}\frac{\partial G}{\partial \f F}(\f F) : \nabla \f \varphi + \f v \cdot \di \Psi   - \left (\frac{\partial G}{\partial \f F}(\f F) - \left \langle \nu_{x,t}, \frac{\partial G}{\partial \f F }\right \rangle \right ) : \nabla\f  \varphi \de x \de \tau 
%
  = 0 \,. 
\end{multline*}
The estimate~\eqref{est:nuelast2} now implies the energy-variational inequality~\eqref{envarin:elast}. 
\end{proof}
\begin{proof}[Proof of Proposition~\ref{prop:exelast}]
We apply the abstract theorems from~\cite{visco} in order to prove the existence and weak-strong uniqueness of energy-variational solutions in the sense of Definition~\ref{def:elastenvar}. 

Therefore, we consider the Banach space $ \V $ given by $ \V = L^2(\Td; \R^d \times \R^{d\times d})$ and the 
energy  $ \E : \V \to \R $ via $ \E(\f v , \f F ) := \int_{\Td} \frac{1}{2} | \f v |^2 + G( \f F) - G ( \f F_{\min})\de x $, where $ \f F_{\min}$ is the unique minimizer of $G$. 
We note that $\V$ is a reflexive Banach space and $\E$ is a strictly convex superlinear energy on $\V$. 
We define $ \Y: =\C^1(\Omega; \R^d\times \R^{d\times d})$ and the operator $ A : \V \to \Y^*$ via 
\[
\langle A( \f v, \f F) , ( \f \varphi , \Psi) \rangle = \int_{\Td} \frac{\partial G}{\partial \f F }( \f F) : \nabla \varphi + \f v \cdot \di \Psi  \de x  \,.
\]
Moreover, we define the convex continuous regularity weight $ \mathcal{K}: \Y \to [0,\infty)$ via $ \mathcal{K}( \f \varphi  , \Psi) := \frac{M}{m} \| \nabla \f \varphi \|_{L^2(\Td;\R^{d\times d})} $. 
The convexity assumption~\eqref{ass:convex} implies that the the mapping
\begin{align*}
\V \ni ( \f v, \f F) \mapsto  - \langle A( \f v, \f F) , ( \f \varphi , \Psi ) \rangle  + \mathcal{K}( \f \varphi  , \Psi ) \E(\f v, \f F) 
\end{align*}
is convex for every $ ( \f \varphi  , \Psi ) \in \Y$, which is the main assumption in~\cite[Thm.~3.3]{visco}.
Moreover, we compute the subdifferential of the convex conjugate of the energy 
\[
 \partial \E^* ( \f \varphi  , \Psi) = \begin{pmatrix}
 \f \varphi  \\D G^* (\Psi )
 \end{pmatrix}\,,
\]
where $ G^*$ denotes the convex conjugate of $G$ as a function on $\R^{d\times d}$. 
Since $ G\in\C^2(\R^{d\times d})$ we infer that $D G^* (\Psi ) \in \C^1(\R^{d\times d }; \R^{d\times d })$. Via Fenchel's equivalence, we find 
$$
\langle A( D \E^*  ( \f \varphi  , \Psi ),  ( \f \varphi  ,\Psi )\rangle = \langle  A ( \f v , \f F) , D\E(\f v , \f F)\rangle  =\int_{\Td} \frac{\partial G}{\partial \f F }( \f F) : \nabla \f v  + \f v \cdot \di \frac{\partial G}{\partial \f F }( \f F) \de x = 0 
$$
for all $ ( \f \varphi  , \Psi ) \in \Y $, which implies $ (\f v , \f F) \in \Y$ such that the calculation becomes obvious with an integration-by-parts. 
Thus, all assumptions of Theorem~3.3 in~\cite{visco} are fulfilled such that there exists an energy-variational solution in the sense of Definition~\ref{def:elastenvar}.
\end{proof}
\section{Polyconvex elastodynamics}\label{sec:poly}
Similar to the previous section, we consider the system
\begin{equation}
\frac{\partial^2\f  y}{\partial t^2}=\nabla \cdot \f W(\nabla \f y)\label{polyconvexeq}
\end{equation}
with $\f y: \Td \times [0,T] \rightarrow \mathbb{R}^d$ being the displacement. In this section, we restrict ourselves to 3 dimensions, \textit{i.e.,} we set $d=3$. But $\f W : \R^{d\times d} \to \R^{d\times d}$ is not the gradient of a convex function anymore, but the Piola--Kirchoff stress tensor. 
We assume that the $\f S$ is the gradient of a function $\sigma : \R^{d\times d} \to \R$ 
where 
$$
\sigma(\f F)=G (\f F, \operatorname{cof} \f F, \operatorname{det} \f F),
$$
and $G  :\R^{d\times d }\times \R^{d \times d } \times \R \to \R$ is  a strictly convex function in all its variables, \textit{i.e.,} $\f F, \operatorname{cof} \f F$ and $\operatorname{det} \f F$ ($\operatorname{cof}\f  F$ is the matrix of the cofactors of $F$).

According to an observation of Qin~\cite{qin}, solutions to~\eqref{polyconvexeq} fulfill the  conservation laws
\begin{subequations}\label{conservpoly}
\begin{align}\label{conservpolyone}
{\partial_ t} \f v  &{}=\di \f  S(\f F) \quad &&\text{in } \Td\times [0,T]
\\
\partial _t\f  F &{} =\nabla \f v \quad &&\text{in } \Td\times [0,T]
\\
{\partial _t} \operatorname{det} \f F & =\di \left( (\operatorname{cof} \f F)^T\f  v\right)\quad &&\text{in } \Td\times [0,T]\,,  \\
{\partial_ t}(\operatorname{cof} \f F) & =\curl (\f v \times \f F)\quad &&\text{in } \Td\times [0,T]\,,
\\
( \f v(0), \f F(0))&{} = (\f v_0, \f F_0) \quad &&\text{in }\Td\,.
\end{align}
\end{subequations}
where we use the cross product for a vector matrix product, \textit{i.e.,}
$ (\f a \times \f A )_{ij}= \sum_{k,l=1}^d\f \epsilon_{ikl} \f a_k \f A_{lj}$ for $\f a\in\R^d$ and $\f A\in\R^{d\times d}$ as well as
$ (\nabla\times \f A )_{ij}= \sum_{k,l=1}^d\f \epsilon_{jkl} \partial_{x_k} \f A_{il}$ for $\f a\in\R^d$ and $\f A\in\C^1(\Omega;\R^{d\times d})$.
such that 
$ (\curl (\f v \times \f F))_{ij}=\sum_{klmn=1}^d \f \epsilon_{jkl} \partial_{x_k}  (\f \epsilon_{imn} \f v_m \f F_{nl})$. 
Where $\f\varepsilon$ is the Levi--Cevita symbol introduced in~\eqref{levi}.
These identities~\eqref{conservpoly} are derivable directly from the equation in~\eqref{polyconvexeq}. We may consider the system in terms of new variables $(\f v,\f  F,\f  Z, w)$ with $\f Z=\operatorname{cof}\f  F$ and $w=\operatorname{det} \f F$. 
The cofactor matrix $\operatorname{cof} \f F$ and the determinant $\operatorname{det}\f  F$ are given by
$$
\begin{aligned}
(\operatorname{cof}\f  F)_{i \alpha} & =\sum_{j,k,\beta,\gamma=1}^3\frac{1}{2} \f\varepsilon_{i j k} \f\varepsilon_{\alpha \beta \gamma} \f F_{j \beta} \f F_{k \gamma}, \\
\operatorname{det} \f F & =\frac{1}{6} \sum_{i,j,k,\alpha,\beta,\gamma=1}^3\f \varepsilon_{i j k} \f\varepsilon_{\alpha \beta \gamma} \f F_{i \alpha} \f F_{j \beta} \f F_{k \gamma}=\frac{1}{3}(\operatorname{cof}\f  F)_{i \alpha} \f F_{i \alpha} .
\end{aligned}
$$
An advantage of this formulation is that the enlarged system admits a strictly convex energy:
$$
\eta : \R^d \times \R^{d\times d} \times \R^{d\times d} \times \R \,; \quad \eta(\f v,\f  F,\f  Z, w)=\frac{1}{2}|\f v|^2+G(\f F, \f Z, w) .
$$
For convenience, we define 
$\zeta : \R^d \times \R^{d\times d} \times \R^{d\times d} \times \R^{d\times d}$  by 
$$
\begin{aligned}
\zeta _{i \alpha}\left(\f F,\f  Z, w \right) & =\frac{\partial G}{\partial\f  F_{i \alpha}}+\sum_{j,k,\beta,\gamma=1}^d\frac{\partial G}{\partial \f Z_{k \gamma}} \varepsilon_{i j k} \varepsilon_{\alpha \beta \gamma} \f F_{j \beta}+\left(\operatorname{cof} \f F\right)_{i \alpha} \frac{\partial G}{\partial w} 
\,,
\end{aligned}
$$
where we used 
 the formulas on derivatives of determinants and cofactor matrices,
$$
\begin{gathered}
\frac{\partial \operatorname{det} \f F}{\partial \f  F_{i \alpha}}=(\operatorname{cof} \f F)_{i \alpha}, \\
\frac{\partial(\operatorname{cof} \f F)_{i \alpha}}{\partial \f F_{j \beta}}=\sum_{k,\gamma=1}^d\varepsilon_{i j k} \varepsilon_{\alpha \beta \gamma} \f F_{k \gamma}\,.
\end{gathered}
$$

This leads to  the equations of elastodynamics in form of  the enlarged system
\begin{subequations}
\begin{align}\label{conservpolytwo}
{\partial_ t} \f v  &{}=\di \f  S(\f F) \quad &&\text{in } \Td\times [0,T]\,,
\\
\partial _t\f  F &{} =\nabla \f v \quad &&\text{in } \Td\times [0,T]\,,
\\
{\partial _t} w  & =\di \left( (\operatorname{cof} \f F)^T\f  v\right)\quad &&\text{in } \Td\times [0,T]\,,  \\
{\partial_ t}\f Z  & =\curl (\f v \times \f F)\quad &&\text{in } \Td\times [0,T]\,,
\\
( \f v(0), \f F(0), \f Z (0) , w(0))&{} 
= (\f v _0 , \f F_0 , \cof \f F_0 , \det \f F_0) \quad &&\text{in }\Td\,.
\end{align}
\end{subequations}

\begin{hypothesis}\label{hyp:elast}
We assume the  following convexity and growth assumptions on $G$, let $p \in(4, \infty)$ and $q, r \in$ $[2, \infty)$ be fixed 
\begin{itemize}
\item $G \in C^3\left(\R^{3 \times 3} \times \R^{3 \times 3} \times \mathbb{R} ;[0, \infty)\right)$ is a strictly convex function \textit{i.e.}, $\exists \gamma>0$ such that $D^2 G \geqq \gamma>0$,
\item $G(\f F,\f  Z, w) \geqq c_1|\f F|^p+c_2|\f Z|^q+c_3|w|^r-c_4$,
\item $G(\f F, \f Z, w) \leqq c\left(|\f F|^p+|\f Z|^q+|w|^r+1\right)$,
\item $\left|\partial_{\f F} G\right|^{\frac{p}{p-1}}+\left|\partial_{\f Z} G\right|^{\frac{p}{p-2}}+\left|\partial_w G\right|^{\frac{p}{p-3}} \leqq C\left(|\f F|^p+|\f Z|^q+|w|^r+1\right)$.
\end{itemize}
\end{hypothesis}

An example of a function satisfying Hypothesis~\ref{hyp:elast} is $\tilde{G}(\f F,\f  Z, w)=$ $\alpha|\f F|^6+|\f F|^2+\beta|\f Z|^3+|\f Z|^2+w^2$ for $\alpha, \beta$ non-negative.

\begin{definition}[Measure-valued solution]\label{def:polyelastmeas}
 A measure-valued solution to~\eqref{eq:elasticity} with initial data $\left(\f v_0,\nabla \f  y_0, \cof \nabla \f  y_0 , \det \nabla \f  y_0\right) \in$ $L^2(\Td;
 \R^d) \times L^p(\Td; \R^{d\times d})\times L^q(\Td; \R^{d\times d})\times L^r(\Td;\R)$ consists of a  tuple $
  (\f v,\f  F,\f  Z ,w ) \in L^{\infty}\left(0,T;L^2(\Td;
 \R^d) \times L^p(\Td; \R^{d\times d})\times L^q(\Td; \R^{d\times d})\times L^r(\Td;\R)\right) $ and a Young measure $\boldsymbol{v}=$ $\left(\boldsymbol{v}_{x, t}\right)_{x, t \in \Td\times (0,T)}$, which is a 
family  of probability measures, \textit{i.e.,}
$$
\{ \nu_{(x,t)} \} \subset \mathcal{P}( \R^d \times \R^{d\times d} \times \R^{d\times d} \times \R )\,, \text{ a.e.~in } \Omega \times (0,T)
$$
 such that  
 \begin{subequations}
  \begin{itemize}
\item the equations~\eqref{conservpolyone} are fulfilled in a measure-valued sense  
  \begin{align}
 \int_{\Td} \f v _0 \cdot \f \psi  \mathrm{d} x+\int_0^T \int_{\Td}  \f v \cdot \t \f \psi  \mathrm{~d} x \mathrm{~d} t&{}=\int _0^T \int _{\Td} \left\langle \nu
 , \zeta (\f s , \f S, \f R, r) : \nabla \f \psi \right\rangle  \mathrm{d} x \mathrm{~d} t\,,\label{eq:polyelastmeas1} \\
 \int _{\Td} \f F_0 : \Psi \mathrm{d} x+\int_0^T \int_{\Td}  \f F : \partial_t \Psi  \mathrm{~d} x \mathrm{~d} t&{} =\int_0^T \int_{\Td}  \f v \cdot  \di \Psi   \mathrm{d} x \mathrm{~d} t\,,\label{eq:polyelastmeas2}\\
 \int _{\Td} \f Z_0 : \Psi \mathrm{d} x+\int_0^T \int_{\Td}  \f Z : \partial_t \Psi  \mathrm{~d} x \mathrm{~d} t&{} =\int_0^T \int_{\Td}  (\f v \times \f F )  : \curl\Psi   \mathrm{d} x \mathrm{~d} t\,,\label{eq:polyelastmeas3}
 \\
 \int _{\Td} w_0 : \phi \mathrm{d} x+\int_0^T \int_{\Td}  w : \partial_t  \phi \mathrm{~d} x \mathrm{~d} t&{} =\int_0^T \int_{\Td} (\operatorname{cof} \f F)^T \f v \cdot \nabla \phi    \mathrm{d} x \mathrm{~d} t\,,\label{eq:polyelastmeas4}
\end{align}
for all  $\f \psi  \in C_c^1\left( \Td \times [0,T) ; \R^d \right)$, $\Psi   \in C_c^1\left( \Td \times [0,T) ; \R^{d\times d} \right)$,
and $\phi \in \C^1(\Td \times [0,T))$. 
\item the compatibility conditions
\begin{align}\notag
\int_{\Td} \f v(t) \cdot \f \psi \de x =& \int_{\Td} \langle \nu_t 
, \f s\cdot \f \psi \rangle \de x &\qquad  \int_{\Td} \f F(t) : \Psi  \de x =& \int_{\Td} \langle \nu_t 
, \f S: \Psi  \rangle \de x&
\\
\int_{\Td} \f Z(t) : \Psi  \de x =& \int_{\Td} \langle \nu_t 
, \f R: \Psi  \rangle \de x  &\qquad  \int_{\Td} w (t)  \phi \de x =& \int_{\Td} \langle \nu_t 
 , r \phi \rangle \de x&\label{eqpoly:cons}
\end{align}
hold for all test functions $\psi  \in C_c^1\left( \Td \times (0,T) ; \R^d \right)$, $\Psi  \in C_c^1\left( \Td \times (0,T) ; \R^{d\times d} \right)$,  $\phi \in \C^1(\Td \times [0,T))$, and a.e.~$t\in(0,T)$,
\item  it holds \begin{equation}\label{eqpoly:inden}
 ( \f F, \f Z , w) = ( \f F ,\operatorname{cof} \f F , \det \f F ) =  ( \langle \nu , \f S\rangle , \cof \langle \nu , \f S\rangle  , \det \langle \nu , \f S\rangle ) 
 \end{equation}
a.e.~in $\Td \times (0,T)$,
\item  there exists a non-negative Radon measure $\boldsymbol{\gamma} \in L^\infty_{w^*}(0,T; \mathcal{M}^{+}(\Td ))$ such that 
\begin{align}\label{eqpoly:elastmeas3}
\int_0^T\int_{\Td}\t \theta \left(\left\langle\nu , \eta\right\rangle \mathrm{d} x +\boldsymbol{\gamma}_t(\mathrm{d} x) \right)\de t +\int_{\Td} \theta(0) \eta\left(\f v_0,\nabla \f  y_0, \cof \nabla \f  y_0 , \det \nabla \f  y_0 \right) \mathrm{d} x \geq 0,
\end{align}
for all non-negative functions $\theta  \in C_c^1([0, T))$,
\item there exists $\f y\in L^\infty(0,T; W^{1,p}(\Td; \R^d) ) \cap W^{1,2}(0,T; L^2(\Td; \R^d))$ such that $ \f v = \t \f y$ and $ \f F = \nabla \f y$.
 \end{itemize}
  \end{subequations}

\end{definition}
The dual pairings for the Young measure $\nu$ is defined as 
\[
\langle \nu_{(x,t)} , g(x, t, \f s , \f S, \f R , r ) \rangle := \int_{\R^d\times \R^{d\times d}\times \R^{d\times d} \times \R } g(x,t, \f s , \f S, \f R , r) \de \nu_{(x,t)}(\f s , \f S, \f R , r) \,
\]
for all $ g \in \C(\Td \times [0,T]\times \R^d\times \R^{d\times d}\times \R^{d\times d}\times \R ) $ fulfilling 
$\lim _{| \f v|+|\f F| \rightarrow \infty} \frac{g(x,t,\f v, \f F, \f Z , w )}{1+|\f v|^2+ G(\f F, \f Z , w) }=0$
for a.e.~$ (x,t) \in \Td\times (0,T)$.
We define the energy $$ \E:  L^2({\Td}; \R^d)\cap  W^{1,p}(\Td;\R^{d } )\to \R \text{ via }\E(\f v , \f y) = \int_{\Td} \eta ( \f v , \nabla \f y , \cof \nabla \f y , \det \nabla \f y)\de x \,.$$

\begin{definition}[Energy-variational solution]\label{def:polyelastenvar}
A tuple  $ (\f y ,  E)$ is called an energy-variational solution to~\eqref{eq:elasticity}, with initial data $$\left(\f v_0,\nabla \f  y_0, \cof \nabla \f  y_0 , \det \nabla \f  y_0\right) \in L^2(\Td;
 \R^d) \times L^p(\Td; \R^{d\times d})\times L^q(\Td; \R^{d\times d})\times L^r(\Td;\R)$$ if 
\begin{subequations}
\begin{align}
\f y \in W^{1,\infty} (0,T; L^2({\Td}; \R^d))\cap L^\infty(0,T; L^p(\Td;\R^{d\times d })
\,, \quad E \in \BV
\end{align}
with $ \operatorname{cof} \nabla \f y  \in  L^\infty (0,T;L^q(\Td;\R^{d\times d }) )$ as well as $ \det \nabla \f y  \in L^\infty(0,T; L^r(\R))$
such that $ E \geq \E( \f v ,\f F)$ for a.e. $t\in [0,T]$ as well as 
\begin{multline}\label{envarin:polyelast}
\left  [ E- \int_{\Td} \t \f y  \cdot \f \varphi + \nabla \f y  : \Psi  + \operatorname{cof} \nabla \f y  :  \Xi+ \det \nabla \f y  \phi  \de x \right ] \Big|_s^t 
\\
+ \int_s^t \int_{\Td} \t \f y \cdot \t \f \varphi + \nabla \f y   : \t \Psi + \operatorname{cof} \nabla \f y  :\t   \Xi+ \det \nabla \f y   \t \phi \de x \de \tau 
\\
- \int_s^t \int_{\Td} \zeta ( \nabla \f y ,  \operatorname{cof} \nabla \f y , \det \nabla \f y  ) : \nabla \f \varphi +  \t \f y  \cdot \di \Psi  
 \de x \de \tau \\
- \int_s^t \int_{\Td} 
(\t \f y  \times \nabla \f y  )  : \curl \Xi  
+(\operatorname{cof} \nabla \f y )^T \t \f y  \cdot \nabla \phi  
%
\de x \de \tau   \\+ \int_s^t c\| \nabla \f \varphi \|_{L^p(\Td;\R^{d\times d})} \left [ \E(\t \f y  , \f y )- E\right ] \de \tau \leq 0 
\end{multline}
for a.e.~$T\geq t > s \geq 0$ including $s=0$ with $ ( \t \f y (0),  \f y(0)) =( \f v_0, \f y_0)$ in $ L^2(\Td;\R^d) \times W^{1,p}(\Td; \R^{ d}) $ and all $( \f\varphi ,\Psi ,  \Xi,\phi  ) \in \C^1(\Td \times [0,T] ; \R^d \times \R^{d\times d}\times \R^{d\times d} \times \R)$. 
\end{subequations}
\end{definition}


\begin{theorem}\label{thm:mainelast2}
Let Hypothesis~\ref{hyp:elast} be fulfilled.
Let $(\f y,  E)$ be an energy-variational solution according to Definition~\ref{def:polyelastenvar}.

 Then there exists a Young measure $\nu $ such that $(\t \f v , \nabla \f y  ,\operatorname{cof} \nabla \f y , \det \nabla \f y ,\nu)$ is a measure-valued solution according to Definition~\ref{def:polyelastmeas}. 

Let $ (\f v, \f F,\f Z,w ,  \nu)$ be a measure-valued solution, where~\eqref{eq:polyelastmeas3} is replaced by: there exists $E:[0,T]\to \R$ non-increasing such that $E(0) = \E(\f v_0, \nabla \f y_0)$ and 
\begin{equation}
E(t) \geq \int_{{\Td}} \langle \nu_{x,t} , \eta \rangle \de x \quad \text{for a.e. }t\in(0,T)\,.\label{strongeninelast}
\end{equation}
as well as 
\begin{align}\label{est:nuelast2}
\left \| \langle \nu , \zeta( \f S, \f R, r) \rangle - \zeta( \f F, \cof \f F, \det \f F ) \right \|_{\mathbb{W}^*} \leq  c \left [E(t) - \E(\f v(t) , \f F(t)
 )\right ]\,
\end{align}
for a.e.~$t\in(0,T)$, where  $\mathbb 
  W : = \{  \f A \in L^p(\Td; \R^{d\times d}) \mid \exists \f y \in W^{1,p}( \Td; \R^{d}) \text{ such that } \nabla \f y = \f A  \}$.

Then $(\f v, \f F, E) $ is an energy-variational solution according to Definition~\ref{def:polyelastenvar}. 
\end{theorem}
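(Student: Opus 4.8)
The plan is to follow the proof of Theorem~\ref{thm:mainelast} almost verbatim, the only structural difference being that three of the four balance laws are linear in their fluxes and so require no relaxation, while the momentum flux $\zeta$ is the sole genuinely nonlinear term needing a measure-valued description. I would first recover the linear balance laws \eqref{eq:polyelastmeas2}, \eqref{eq:polyelastmeas3}, and \eqref{eq:polyelastmeas4}. In \eqref{envarin:polyelast} I set all test functions to zero except one of $\Psi$, $\Xi$, $\phi$, replace the surviving one by $\tfrac1\alpha$ times a fixed test function, multiply the inequality by $\alpha>0$, and let $\alpha\searrow 0$. Because none of the fluxes $\nabla\f y$, $\t\f y\times\nabla\f y$, $(\cof\nabla\f y)^T\t\f y$ carries the regularity weight, the term $c\|\nabla\f\varphi\|_{L^p(\Td)}[\E-E]$ vanishes (here $\f\varphi=0$), and testing with $\pm$ the test function turns the surviving inequality into the desired weak equality; the products appearing are consistent with \eqref{eq:polyelastmeas3}--\eqref{eq:polyelastmeas4} via the identity \eqref{eqpoly:inden}. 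The analogous scaling of $\f\varphi$ alone (with $\Psi=\Xi=\phi=0$) yields the \emph{relaxed} momentum balance still containing $c\|\nabla\f\varphi\|_{L^p}[\E(\t\f y,\f y)-E]$.

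Next I would build the defect. Defining the linear form $\f l$ on gradients from the boundary, time-derivative, and $-\int\!\!\int\zeta(\nabla\f y,\cof\nabla\f y,\det\nabla\f y):\nabla\f\varphi$ terms, together with the sublinear functional $\mathfrak p(\f A):=\int_0^T c\|\f A\|_{L^p(\Td;\R^{d\times d})}[E-\E(\t\f y,\f y)]\de\tau$, Lemma~\ref{lem:hahn} produces $\mathfrak R\in L^\infty_{w^*}(0,T;\mathbb{W}^*)$ with $\langle\f l,\f\varphi\rangle=\langle-\mathfrak R,\nabla\f\varphi\rangle$ and $\langle-\mathfrak R,\f A\rangle\le\mathfrak p(\f A)$; taking the supremum over $\f A$ gives the pointwise bound $\|\mathfrak R(t)\|_{\mathbb{W}^*}\le c[E(t)-\E(\t\f y(t),\f y(t))]$, which is exactly \eqref{est:nuelast2}. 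I would then apply Lemma~\ref{lem:Young} with $\eta(\f s,\f S,\f R,r)=\tfrac12|\f s|^2+G(\f S,\f R,r)$, $g=\zeta$, $W=\mathbb{W}$, and data $\f y_0=(\t\f y,\nabla\f y,\cof\nabla\f y,\det\nabla\f y)$, $\f z_0=\mathfrak R$, $x_0=E-\E$. Hypothesis~\ref{hyp:elast} supplies strict convexity and superlinearity of $\eta$ and, through its fourth bullet, both the growth $|\zeta|/\eta\to0$ and the bound \eqref{bound} in the $L^{p'}$--$L^p$ pairing. The resulting $\nu$ has the correct first moments (hence the compatibility conditions \eqref{eqpoly:cons}) and satisfies $\langle\nu,\zeta\rangle=\zeta(\nabla\f y,\cof\nabla\f y,\det\nabla\f y)+\mathfrak R$, so substitution into the relaxed balance yields \eqref{eq:polyelastmeas1}; the constraint \eqref{eqpoly:inden} is automatic since $(\f F,\f Z,w)=(\nabla\f y,\cof\nabla\f y,\det\nabla\f y)$ are the first moments, the energy moment identity of the lemma combined with the monotonicity of $E$ (all test functions zero) gives \eqref{eqpoly:elastmeas3}, and the deformation $\f y$ demanded in the last bullet of Definition~\ref{def:polyelastmeas} is inherited from the energy-variational solution.

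For the converse, Jensen's inequality applied to the convex $\eta$ together with \eqref{strongeninelast} gives $E\ge\E$. I would subtract $E\big|_s^t\le0$ from the four weak formulations \eqref{eq:polyelastmeas1}--\eqref{eq:polyelastmeas4}, add and subtract $\zeta(\nabla\f y,\cof\nabla\f y,\det\nabla\f y):\nabla\f\varphi$, and bound the remaining defect term, namely the $\mathbb{W}^*$--$\mathbb{W}$ pairing of $\langle\nu,\zeta\rangle-\zeta(\f F,\cof\f F,\det\f F)$ with $\nabla\f\varphi$, by $c\|\nabla\f\varphi\|_{L^p}[E-\E]$ using \eqref{est:nuelast2}. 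This reproduces \eqref{envarin:polyelast}.

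\textbf{Main obstacle.} The critical step is verifying the hypotheses of Lemma~\ref{lem:Young}, above all the surjectivity of the Nemytskii operator induced by $\zeta$ onto $\mathbb{W}$, i.e.\ solving $\zeta(\f y_1)=\mathfrak R$ pointwise. In the convex case this was immediate because $g=DG$ is the gradient of a coercive convex function; here $\zeta$ mixes $\partial_{\f F}G$, $\partial_{\f Z}G$, and $\partial_w G$ and is not such a gradient, so one must exploit the extra freedom in the variables $(\f R,r)$ and the coercivity and growth of Hypothesis~\ref{hyp:elast} to invert $\zeta$. A secondary technical point is the correct identification of $\mathbb{W}^*$ as a quotient of $L^{p'}(\Td;\R^{d\times d})$, which must be compatible with the gradient structure of the momentum flux so that the $L^{p'}$--$L^p$ pairing underlying \eqref{bound} and \eqref{est:nuelast2} is consistent.
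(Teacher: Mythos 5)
Your outline reproduces the paper's argument step for step---the scaling of the test functions to extract the linear balance laws, Lemma~\ref{lem:hahn} for the defect $\mathfrak R$ together with the bound \eqref{est:nuelast2}, Lemma~\ref{lem:Young} for the Young measure, and the Jensen/add-and-subtract argument for the converse---but the point you flag as the ``main obstacle'' is exactly the one genuinely new ingredient of this theorem relative to Theorem~\ref{thm:mainelast}, and you leave it unresolved; moreover the direction you suggest (exploiting ``the extra freedom in the variables $(\f R,r)$'') is not how it is overcome. The paper instead \emph{restricts} to the constrained manifold $(\f S,\f R,r)=(\nabla\f y,\cof\nabla\f y,\det\nabla\f y)$ and uses that $\zeta$ is by construction the Euler--Lagrange operator, in the gradient variable, of the polyconvex functional $\f y\mapsto\int_{\Td}G(\nabla\f y,\cof\nabla\f y,\det\nabla\f y)\,\de x$. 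Given $\f A\in\mathbb W$ one minimizes
\[
G_{\f A}(\f y):=\int_{\Td}G(\nabla\f y,\cof\nabla\f y,\det\nabla\f y)-\f A:\nabla\f y\,\de x
\]
over $W^{1,p}(\Td)$; coercivity and polyconvexity from Hypothesis~\ref{hyp:elast} yield weak lower semicontinuity and hence a minimizer by the direct method, and the Euler--Lagrange equation reads $\int_{\Td}\left(\zeta(\nabla\f y,\cof\nabla\f y,\det\nabla\f y)-\f A\right):\nabla\f\varphi\,\de x=0$ for all $\f\varphi\in W^{1,p}(\Td)$, i.e.\ $\zeta(\nabla\f y,\cof\nabla\f y,\det\nabla\f y)=\f A$ in $\mathbb W^*$ (equality is only needed in the pairing against gradients, and the curl-free normalization built into $\mathbb W$ makes the representative unique, which also settles your secondary concern). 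Surjectivity of the full Nemytskii map on $V$ follows a fortiori; its well-definedness and the bound \eqref{bound} are checked separately from the fourth bullet of Hypothesis~\ref{hyp:elast} via the estimate $|\zeta(\f S,\f R,r)|^{p/(p-1)}\le C(|\f S|^p+|\f R|^q+|r|^r+1)$.

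Without this step the construction of the Young measure cannot be carried out: Lemma~\ref{lem:Young} explicitly requires the surjectivity of $g=\zeta$ onto the target space in order to represent the defect $\mathfrak R$ as $g(\f y_1)$ for some $\f y_1\in V$, and this is not a technicality that transfers by analogy from the convex case, where $g=DG$ was the differential of a coercive convex function and surjectivity was automatic. The remainder of your argument, in both directions, is correct and coincides with the paper's.
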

\begin{remark}
The existence of measure-valued solutions in the sense of Definition~\ref{def:polyelastmeas} was proven in~\cite{DemouliniExist} under the Hypothesis~\ref{hyp:elast}, the weak-strong uniqueenss was shown in~\cite{DemouliniWeakStrong}. 

The energy-variational formulation in the sense of Definition~\ref{def:polyelastenvar} has the advantage, that no Young-measures are needed to describe the solutions, but moreover the solution does not need the additional variables $ \f Z $ and $w$. These are in the Definition~\ref{def:polyelastmeas} only needed within the Young-measure-valued terms in~\eqref{eq:polyelastmeas1} and~\eqref{eq:elastmeas3} due to the identification~\eqref{eqpoly:inden}.

\end{remark}

\begin{proof}[Proof of Theorem~\ref{thm:mainelast2}]
Let $ (\f y , E)$ be an energy-variational solution according to Definition~\ref{def:polyelastenvar}.  We set $ \f v = \t \f y $ and $ \f F = \nabla \f y$. 
We observe that the formulation~\eqref{envarin:polyelast} is linear in the test functions $ \Psi  $, $\Xi $, as well as $\phi$. As beforehand in the previous proofs, we can deduce that the weak formulations~(\ref{eq:polyelastmeas2}--\ref{eq:polyelastmeas4}) are fulfilled with the identifications according to~\eqref{eqpoly:cons} and \eqref{eqpoly:inden}.

On the other hand, choosing $ \f \varphi = \frac{1}{\alpha } \f \phi $ for $ \alpha>0 $ and $ \f \phi \in \C^1(\Td\times [0,T]; \R^d )$, $ \Psi  \equiv 0$, $\Xi  \equiv 0$, as well as $\phi \equiv 0$ and multiplying the resulting inequality by $ \alpha$, we infer in the limit~$\alpha \searrow 0$ the inequality
\begin{align}\label{polylinineq}
- \int_{\Td} \f v \cdot \f \phi  \de x \Big|_0^T + \int_0^T \int_{\Td} \f v \cdot \t \f \phi  - \zeta ( \f F, \cof \f F , \det \f F ) : \nabla \f \phi\de x\de t \\
 + \int_0^T c \| \nabla \f \phi \|_{L^p(\Td;\R^{d\times d})} \left [ \E(\f v ,\f F)- E\right ] \de t & \leq 0 \,
\end{align}
for all 
$ \f \phi \in \C^1(\Td\times [0,T]; \R^d )$ and a.e.~$s<t\in (0,T)$. 
Defining $\mathcal{V}= \mathcal{U} := W^{1,1}(0,T; W^{1,p}(\Td))$ with 
\begin{align*}
\f l: \mathcal{U} \to \R \qquad \langle \f l , \f \varphi \rangle := - \int_{\Td} \f v \cdot \f \phi  \de x \Big|_0^T + \int_0^T \int_{\Td} \f v \cdot \t \f \phi  - \zeta ( \f F ,\cof \f F , \det \f F) : \nabla \f \phi\de x \de t 
\end{align*}
as well as 
\begin{align*}
\mathfrak{p}: L^1(0,T; L^p  ({\Td} ; \R^{d\times d}) \to [0,\infty) \qquad \mathfrak{p}(\f A):= c \int_0^T\| \f A \|_{L^p(\Td;\R^{d\times d})} \left [ E- \E(\f v ,\f F)\right ] \de t \,,
\end{align*}
we may apply Lemma~\ref{lem:hahn} in order to infer that there exists 
$\mathfrak R \in L^\infty_{w}(0,T;L^{p'}(\Td ; \R_{\sym}^{d\times d}
)) $
such that $\langle - \mathfrak{R}, \f A \rangle \leq \mathfrak{p}(\f A ) $ and $\langle \f l , \f \varphi \rangle = \langle - \mathfrak{R}, \f \varphi \rangle $. 
With the definition of $\f l$ and taking the supremum in $\f A$ , we find that
\begin{align}\label{lineqpoly}
- \int_{\Td} \f v \cdot \f \phi  \de x \Big|_0^T + \int_0^T \int_{\Td} \f v \cdot \t \f \phi  - \zeta( \f F ,\cof \f F , \det \f F)  : \nabla \phi\de x + \int_0^T \int_{{\Td}} \nabla \f \varphi :  \de \mathfrak{R}( x)  = 0 \,
\end{align}
as well as  $ \| \mathfrak{R}(t) \| _{L^{p'}(\Td; \R^{d\times d} )} \leq c \left [E(t) - \E(\f v(t) , \f F(t))\right ]$.
%
Now, we want to construct the Young measure $\nu$ via Lemma~\ref{lem:Young}.
%
Therefore, we define 
 $$g:\R^ d \times  \R^{d\times d} \times \R^{d\times d} \times \R  \to \R^{d\times d} \text{ via }  g ( \f s , \f S, \f R , r   ) := \zeta( \f S, \f R, R) $$ as well as $$\eta: \R^ d \times  \R^{d\times d} \times \R^{d\times d} \times \R  \to \R  \text{ via }\eta (\f s , \f S, \f R, r ) = \frac{1}{2}| \f s |^2 + G ( \f S , \f R, r) $$ and observe that $\eta$ fulfills all requirements of Lemma~\ref{lem:Young} thanks to Hypothesis~\ref{hyp:elast}. 
 The growth of $\eta$ allows to identify with $ \Omega = \Td$ that $ V = L^2(\Td; \R^d) \times L^p(\Td; \R^{d\times d} ) \times L^q(\Td;\R^{d\times d}) \times L^r(\Td; \R) $. We define $$ 
  W : = \{  \f A \in L^p(\Td; \R^{d\times d}) \mid \exists \f y \in W^{1,p}( \Td; \R^{d}) \text{ such that } \nabla \f y = \f A  \}$$ and   equip it with the norm  $\| \f S\|_{W}  =\frac{1}{c}\|\f S\|_{L^2(\Omega;\R^{d\times d})}  $. 
We want to show that $ \zeta $ induces a surjective mapping from $V$ to $W^*$. 
First, we show that the Nemytskii mapping  induced by $ \zeta $ is well defined. 
Therefore, we observe 
\begin{align*}
| \zeta (\f S, \f R , r) |^{\frac{p}{p-1}} &\leq C \left ( \left | \frac{\partial G}{\partial \f F}(\f S, \f R , r)\right |^{\frac{p}{p-1}}  + | \f S |^{\frac{p}{p-1}} \left |\frac{\partial G}{\partial \f Z}(\f S, \f R , r)\right |^{\frac{p}{p-1}} +  | \f S |^{\frac{2p}{p-1}} \left |\frac{\partial G}{\partial r}(\f S, \f R , r)\right |^{\frac{p}{p-1}}\right )
\\
&\leq C \left (| \f S|^p + \left | \frac{\partial G}{\partial \f F}(\f S, \f R , r)\right |^{\frac{p}{p-1}}  +  \left |\frac{\partial G}{\partial \f Z}(\f S, \f R , r)\right |^{\frac{p}{p-2}} +  \left |\frac{\partial G}{\partial r}(\f S, \f R , r)\right |^{\frac{p}{p-3}}\right )
\\
&\leq C \left (| \f S|^p +|\f R|^q + |r|^r +1 \right )\,.
\end{align*}
This implies that $\zeta $ induces a mapping $\zeta : V \to W^*$ that is well defined. Additionally, this implies that the estimate~\eqref{bound} is fulfilled. 
Now, we want to show that this mapping is also surjective. 
Therefore, we consider for every $ \f A \in W$ the functional $$ G_{\f A} : W^{1,p}(\Omega) \to \R \quad \text{via}\quad G _ {\f A}(\f y ) := \int_{\Td} G ( \nabla \f y , \cof \nabla \f y , \det  \nabla \f y ) - \f A : \nabla \f y \,.$$ This is a lower semicontinuous functional with $p$-growth according to Hypothesis~\ref{hyp:elast}, which guarantees the existence of minimizers (\textit{cf}.~\cite[Thm.~6.5]{rindler}).  Due to the regularity of $G$, we can derive the Euler--Lagrange equations, which are given by 
\begin{align*}
\int_{\Td} \zeta(\nabla \f y , \cof \nabla \f y , \det \nabla \f y ) : \nabla \f \varphi - \f A : \nabla \f \varphi \de x = 0 \quad \text{for all }\f \varphi \in W^{1,p}(\Td) \,.
\end{align*}
Since, $\f A$ was arbitrary, this guarantees the surjectivity of the mapping $ \nabla \f y \mapsto \zeta (\nabla \f y , \cof \nabla \f y ,\det \nabla \f y )$ from $W^{1,p}(\Td)$ onto $W$. This in turn also implies the surjectivity of the mapping $ \zeta$ as a mapping from $V $ to $W^*$. We note that the element $\f A$ is uniquely defined in the product with all gradients $\nabla \f \varphi \in L^p(\Td;\R^{d\times d})$ as it is curl-free due to the condition $\f A \in W$.

 We are now in the position to apply Lemma~\ref{lem:Young} for $ \f y_0 = (\f v , \f F, \f Z , w ) = ( \t \f y , \nabla \f y , \cof \nabla \f y , \det \f y)$, $ \f z _0 = \mathfrak{R}$ and $ x_0 = E-\E(\f v, F)$. 
  According to Lemma~\ref{lem:Young} there exists a $$ \nu \in L^\infty_{w^*}(0,T;L^1(\Td)\times \mathcal{M}(\R^d\times \R^{d\times d} \times \R^{d\times d} \times \R )) \text{ and }\gamma \in L^\infty_{w^*}(0,T;\mathcal{M}^+(\Td))$$  such that 
\begin{align*}
1 =  \langle  \nu_{x,t} , 1\rangle  \,, \quad 
(\f v(x,t), \f F(x,t), \f Z(x,t), w(x,t)) =\langle  \nu_{x,t} , (\f s , \f S, \f R, r )\rangle  \,, \quad \text{a.e.~in $\Omega \times (0,T)$ and } \\\quad  \zeta ( \f F (x,t), \f Z(x,t), r(x,t))  - \mathfrak{R}(x,t)= \langle \nu_{x,t}, \zeta(\f S, \f R, r )\rangle   \quad \text{in }W^*
\end{align*} 
a.e.~in $\Omega \times (0,T)$ as well as 
\begin{align*}
\E(\t\f y, \f y  ) + \frac{1}{c}\| \mathfrak{R}\|_{W^*}  + E-\E(\t \f y , \f y) = \int_{\Td} \left  \langle \nu , \frac{1}{2}|\f s |^2 + G( \f S, \f R, r ) \right \rangle  \de x + \int_{\Td} \de \gamma_t(x) \,.
\end{align*}
This inserted into~\eqref{lineqpoly} implies the formulation~\eqref{eqpoly:elastmeas3}. The monotonicity of $E$, stemming from choosing $\f \varphi \equiv \f 0$ as well as $\Psi  \equiv \f 0$ in~\eqref{envarin:polyelast},  then implies~\eqref{eqpoly:elastmeas3}.


This proves the first assertion of  Theorem~\ref{thm:mainelast2}.
We note that due to the estimate on~$\mathfrak{R}$, and its identification, we find that the estimate~\eqref{est:nuelast2} holds true.

Let $ (\f v, \f F,\f Z,w ,  \nu)$ be a measure-valued solution, where~\eqref{eq:polyelastmeas3} is replaced by: there exists $E:[0,T]\to \R$ non-increasing such that $E(0) = \E(\f v_0, \f F_0)$ and~\eqref{strongeninelast} as well as~\eqref{est:nuelast2} are fulfilled. 
First, we observe that we may identify $ ( \f v , \f F , \f Z , w) = ( \t \f y , \nabla \f y , \cof \nabla \f y, \det \nabla \f y ) $ according to~\eqref{eqpoly:inden} and the last point of Definition~\ref{def:polyelastmeas}. We note that the function $\f y$ could also be constructed by setting $ \f y := -(-\Delta)^{-1} ( \di \f F)$ in the weak sense, where $  (-\Delta)^{-1}$ denotes the inverse of the Laplace operator on the torus given by convolution with the appropriate Green's function. 
This is only unique, if the mean $\ov{\f y}= \int_{\Td} \f y (x,t)\de x $ is fixed, which should be fixed to $ \ov{\f y}(t)=\ov{\f y}_0 + t \ov{\f v}_0$. From~\eqref{eq:polyelastmeas2} we can identify $ \f v = \t \f y$ in the weak sense. 

Due to the convexity of $\eta$, we infer from Jensen's inequality
\begin{equation}
\begin{aligned}
E(t) &\geq \int_{\Td} \langle \nu_{x,t}, \eta ( \f s, \f  S, \f R, r ) \rangle \de x \\&\geq \int_{\Td} \eta \left ( \langle \nu_{x,t}, ( \f s, \f  S, \f R, r ) \rangle \right ) \de x\\&  = \int_{\Td} \eta ( \f v (x,t) , \f F(x,t) , \f Z(x,t), w(x,t))\de x  \\&=\int_{\Td} \eta ( \f v (x,t) , \f F(x,t) , \cof \f F(x,t), \det \f F(x,t))\de x\\ &  =\int_{\Td} \eta ( \t \f y (x,t) , \nabla \f y (x,t) , \cof \nabla \f y(x,t), \det \nabla \f y(x,t))\de x
\\
&=  \E(\t\f y (t),\f y(t) ) 
\end{aligned}
\end{equation}
for a.e.~$t\in(0,T)$
Now, we subtract from $ E(t) - E(s) \leq 0$ the four equations~\eqref{eq:polyelastmeas1}--\eqref{eq:polyelastmeas4} with $ ( \f v , \f F , \f Z , w) = ( \t \f y , \nabla \f y , \cof \nabla \f y, \det \nabla \f y ) $ and add as well as subtract the term $\zeta (\nabla \f y , \cof \nabla \f y , \det \nabla \f y )$

\begin{multline*}
\left  [ E- \int_{\Td} \t \f y  \cdot \f \varphi + \nabla \f y  : \Psi  + \operatorname{cof} \nabla \f y  :  \Xi+ \det \nabla \f y  \phi  \de x \right ] \Big|_s^t 
\\
+ \int_s^t \int_{\Td} \t \f y \cdot \t \f \varphi + \nabla \f y   : \t \Psi + \operatorname{cof} \nabla \f y  :\t   \Xi+ \det \nabla \f y   \t \phi \de x \de \tau 
\\
- \int_s^t \int_{\Td} \zeta ( \nabla \f y ,  \operatorname{cof} \nabla \f y , \det \nabla \f y  ) : \nabla \f \varphi +  \t \f y  \cdot \di \Psi  
\de x \de \tau 
\\
- \int_s^t \int_{\Td}
(\t \f y  \times \nabla \f y  )  : \curl  \Xi  
+(\operatorname{cof} \nabla \f y )^T \t \f y  \cdot \nabla \phi  
%
\de x \de \tau   \\- \int_s^t\int_{\Td} \left [ \zeta ( \nabla \f y ,  \operatorname{cof} \nabla \f y , \det \nabla \f y  )  - \langle \nu_{x,t}, \zeta(\f S, \f R, r )\rangle \right ] : \nabla \f \varphi  \de x \de \tau \leq 0 
\end{multline*}
for a.e.~$T\geq t > s \geq 0$.
The estimate~\eqref{est:nuelast2} now implies the energy-variational inequality~\eqref{envarin:polyelast}. 
\end{proof}

\section{Ericksen--Leslie equations equipped with the Oseen--Frank energy}\label{sec:EL}
The Ericksen–Leslie equations describe the dynamics of nematic liquid crystals, a state of matter that exhibits properties between conventional liquids and solid crystals.
The system couples the momentum balance equation for the velocity field $\f v$, which includes anisotropic effects, with a gradient flow equation for the elastic energy associated with the director field $\f d$. The director field models the locally preferred molecular alignment direction, representing either rod-like molecules suspended in a liquid solvent or the intrinsic structure of the liquid crystal itself. This coupling is highly nonlinear, reflecting the complex interplay between hydrodynamics and molecular orientation.
Let $\f v: \overline{\Omega} \times [0,T] \to \mathbb{R}^d$ denote the velocity of the fluid,  $p: \overline{\Omega} \times [0,T] \to \mathbb{R}$ its pressure and $\f d: \overline{\Omega} \times [0,T] \to \mathbb{R}^d$ the director for $d=2,3$.
We consider the system governed by the equations
\begin{subequations}\label{system_to_reformulate}
\begin{align}
\label{system_a_to_reformulate}
    \partial_t \f v + (\f v \cdot \nabla) \f v  + \nabla p + \nabla \cdot \f T^E - \nabla \cdot \f T^L &= \f g, && 
    \nabla \cdot \f v = 0,  
\\
\label{system_c_to_reformulate}
    \partial_t \f d + (\f v \cdot \nabla) \f d - (\nabla \f v)_{skw} \f d +(I - \f d \otimes \f d ) ( \lambda (\nabla \f v)_{sym} \f d + \f q) &=\f 0, 
&& 
    |\f d|=1 
\end{align}
\end{subequations}
in $\Omega \times (0,T)$,
where we employ the initial conditions
\begin{align*}
    \f v(0) = \f v_0, \quad \f  d(0) = \f d_0 \text{ with } |\f d_0| = 1 \text{ in } \Omega 
\end{align*}
and boundary conditions
\begin{align*}
    \f v = 0, \quad \f d =\f  d_{0}  \text{ with } |\f d_0| = 1 \text{ on } \partial \Omega \times (0,T) \, .
\end{align*}
The function $\f q$ in system \eqref{system_to_reformulate} is the variational or Gateaux derivative of the free energy functional $ {F}: \R^ {d\times d} \times \R^d \to \mathbb{R} $ with respect to $\f d$ via 
\begin{align*}
   \f q : = \frac{\partial F(\f d,\nabla \f d)}{\partial \f d}  - \nabla \cdot \left (\frac{\partial F(\f d,\nabla \f d)}{\partial \nabla \f d} \right )
\end{align*}
where $F$ is called the Helmholtz free energy potential.
Further the Ericksen stress tensor is defined as 
\begin{align*}
   \f  T^E \coloneqq
    \nabla \f d^T \frac{\partial}{\partial \nabla \f d} F(\f d,\nabla\f  d),
\end{align*}
and the Leslie stress tensor $T^L$ is defined by
\begin{align*}
  \f   T^L \coloneqq
    & (\mu_1 + \lambda^2) (\f d \cdot (\nabla \f v)_{sym} \f d) (\f d \otimes \f d)
    + \mu_4 (\nabla \f v)_{sym} \\&
    +(\mu_5 + \mu_6 - \lambda^2) (\f d \otimes (\nabla \f v)_{sym}  \f d)_{sym}
    \\
    &- \lambda [\f d \otimes(I- \f d \otimes \f d)  \cdot\f  q]_{sym}
    - [\f d \otimes\f  q]_{skw},
\end{align*}
with $ \mu_4 >0, \quad \mu_5 + \mu_6 - \lambda^2\geq 0, \quad \mu_1 +\lambda^2 \geq 0$ in order to ensure the dissipative character of the model.

The \textit{Oseen--Frank} energy is given by~(see~Leslie~\cite{leslie}) \
\begin{align*}
F(\f d , \nabla \f d) := \frac{K_1}{2} (\di \f d )^2 +\frac{K_2}{2}( \f d \cdot \curl \f d )^2  + \frac{K_3}{2} |\f d \times \curl \f d|^2 + \frac{K_4}{2}
\left(\tr ((\nabla \f d )^2)- (\di \f d)^2  \right) 
 \,,
\end{align*}
where $K_1,K_2,K_3, K_4>0$.
The last term is uniquely determined by the prescribed boundary conditions since it can be written as a boundary term via
\begin{align*}
\int_\Omega \left(\tr ((\nabla \f d )^2)- (\di \f d)^2  \right)  \de x = \int_\Omega \di \left( \nabla \f d \f d - (\di \f d) \f d \right)\de x = \int_{\partial \Omega} \f n \cdot \left( \nabla \f d _0\f d_0 - (\di \f d_0) \f d_0 \right) \de S \,.
\end{align*}
For simplicity, we want to consider the case of two different constants $ K_1 =K_2<K_3$ and $K_4=0$, which is a reasonable assumption~\cite[Sec.~3.1.3.]{gennes}. 
This energy can be reformulated using the norm  restriction, $|\f d|=1$, to
\begin{align*}
F(\f d , \nabla \f d) = \frac{K_1}{2}|\nabla \f d|^2 + \frac{K_3-K_1}{2}| (\f d \cdot \nabla )\f d |^2 \,.
\end{align*}
For this, we used several calculation rules: 
\begin{align*}
\begin{split}
        (\di \f d )^2 &+ ( \f d\cdot \curl \f d )^2 + | \f d\times \curl \f d |^2 + \tr ( \nabla \f d^2 ) - ( \di \f d )^2 \\
    &= \tr ( \nabla \f d^2 ) + | \f d |^2 | \curl \f d |^2 = \tr ( \nabla \f d^2 ) + 2 | ( \nabla \f d)_{\skw} |^2\\
   & = \tr ( \nabla \f d ^2 ) + \frac{1}{2}\tr ( ( \nabla \f d - \nabla \f d^T) ^T ( \nabla \f d - \nabla \f d^T)  )  = \tr ( \nabla \f d^T \nabla \f d) = | \nabla \f d |^2 \,,
\end{split}
\end{align*}
as well as 
$$ 
|\f d \times (\curl \f d)|^2 = 4 | ( \nabla \f d)_{\skw} \f d | ^2 = | ( \nabla\f d - \nabla \f d ^T) \f d |^2 = | ( \f d \cdot \nabla )\f d - \nabla \f d ^T \f d |^2 =  | ( \f d \cdot \nabla )\f d |^2\,,
$$ 
where we used the fact that  $ 2 \nabla \f d ^T \f d = \nabla | \f d|^2 = 0$ under  the norm  restriction, $|\f d|=1$. 
For simplicity, we set $ K_1 =K_2=1$ and $ K_3-K_1 = k $ such that we infer
\begin{align*}
\frac{\partial F}{\partial \nabla \f d }( \f d ,\nabla \f d) = \nabla \f d +k (\f d \cdot  \nabla) \f d \otimes \f d  \quad \text{and}\quad \frac{\partial F}{\partial  \f d }( \f d ,\nabla \f d) = k (\nabla \f d ^T \nabla \f d ) \f d \,.
\end{align*}
The full system thus specifies to~\eqref{system_a_to_reformulate} with 
\begin{align}\label{defqandTe}
\f q  = k (\nabla \f d ^T \nabla \f d ) \f d - \di \left(\nabla \f d +k (\f d \cdot  \nabla) \f d \otimes \f d \right) \quad \text{and}\quad \f T^E= \nabla \f d^T \nabla \f d \left(  I +k  \f d \otimes \f d \right) \,.
\end{align}

For this model, we can simplify the measure-valued solutions defined in~\cite{meas}.

\begin{definition}[Measure-valued solutions]\label{def:meas}
The tuple $( \f v ,\f d, \nu,  \mu , \kappa )$ consisting of the pair $(\f v , \f d)$ of velocity field $\f v$ and director field  $\f d$, the  Young measure $ \nu$ and the defect measures $(\mu , \kappa)$ (see below)  is  said to be a measure-valued solution to~\eqref{system_a_to_reformulate} with initial values $ ( \f v _0, \f d _0)$
\begin{itemize}
\item if
\begin{align}
\begin{split}
\f v &\in L^\infty(0,T;\Ha)\cap  L^2(0,T;\Hsig)
\cap W^{1,2}(0,T; ( \f W^{2,p}_{0,\sigma})^*) \text{ for }p>3,
\\ \f d& \in L^\infty(0,T;\He)\cap   W^{1,2}  (0,T;  \f  L^{\nicefrac{3}{2}} ),\\
\{\nu _{(\f x,t)}\}&  \subset \mathcal{P} ( \R^{3\times 3})\,, \text{ a.\,e.~in $\Omega\times (0,T)$} \, ,\\
 \{\mu _t \} &\subset \mathcal{M}^+(\ov\Omega; \R^{3\times 3})\,,\text{ a.\,e.~in $ (0,T)$} \, ,  \\
 \{\kappa_t\} &\subset \mathcal{M}^+(\ov\Omega; \R^3)\,,\text{ a.\,e.~in $ (0,T)$} 
 \, ,  
\end{split}\label{measreg}
\end{align}
\item and if
\begin{subequations}\label{meas}
\begin{align}
\int_0^T \int_\Omega \t \f v
\cdot  \f \varphi
+ (\f v\cdot \nabla) \f v\cdot  \f \varphi -  \ll{\nu_{x,t},\f S^T \f S ( I + k\f d \otimes \f d ):\nabla \f \varphi   }  \de x \de t &
\\-\int_0^T \ll{\mu_t,  \nabla\f \varphi }\de t + \intte{\int_\Omega \f T^L: \nabla \f \varphi \de x  } &={}\intte{ \left \langle \f g ,\f \varphi\right \rangle }\, ,
\label{eq:velo}
\end{align}
{as well as}
\begin{align}
\partial_t \f d + (\f v \cdot \nabla) \f d - (\nabla \f v)_{skw} \f d +(I - \f d \otimes \f d ) ( \lambda (\nabla \f v)_{sym} \f d + \f q) &=0 
\end{align}
{ almost everywhere in $\Omega \times (0,T)$ with }
\begin{align}
 \intte{\int_\Omega \ll{\nu_{x,t}, k(\f S^T \f S \f d )\cdot\f \psi   }\de x }
+ \intte{\int_\Omega (\nabla \f d + k \nabla\f d \f d \otimes \f d) :  \nabla \f \psi \de x }&
 \\-{}\intte{\left \langle\f q , \f \psi\right  \rangle
 - \langle \kappa _t , \f \psi \rangle } &=0
  \,, 
\label{eq:q}
\end{align}%
\end{subequations}
holds for all $ \f \varphi \in \mathcal{C}_c^\infty(\Omega \times ( 0,T);\R^3)$ with $ \di \f \varphi =0$
and $ \f \psi \in  \mathcal{C}_c^\infty(\Omega \times ( 0,T);\R^3)$, respectively,
\item  the energy inequality 
\begin{align}
\begin{split}
 &\frac{1}{2}\|\f v (t)\|_{\Le}^2 + \ll{\nu_t, F} + \frac{1}{c}\ll{\mu_t , I } + \frac{1}{c} \langle  \kappa _t, \f d \rangle    + \inttet{(\mu_1+\lambda^2)\|\f d\cdot \sy{v}\f d\|_{L^2}^2 }  \\
& +\inttet{ \left [
  \mu_4 \|\sy{v}\|_{\Le}^2+( \mu_5+\mu_6-\lambda^2)\|\sy{v}\f d\|_{\Le}^2  +  \|\f d \times \f q\|_{\Le}^2\right ]}
\\
& \qquad\qquad \leq  \left ( \frac{1}{2}\|\f v_0 \|_{\Le}^2 + \int_{\Omega} F ( \f d_0, \nabla \f d_0 )\de x \right )
 + \inttet{\left [\langle \f g , \f v \rangle 
 \right ]}
\end{split}
\label{energyin}
\end{align}
is fulfilled for a.e. in $(0,T)$ and a certain $c>0$,

\item 
additionally, the norm restriction of the director holds, \textit{i.e.,} $|\f d (\f x,t)|=1$ for a.\,e.~$(\f x, t)\in \Omega\times (0,T)$, the Young measure of a linear function is the gradient of the director
\begin{align}
  \int_{\R^{3\times 3} } \f S  \nu_{(\f x, t)} ( \de \f S)   = \nabla \f d(\f x,t) \, \quad \text{for a.e. } (\f x ,t) \in \Omega\times (0,T)\,,\label{identify}
\end{align}
and the
initial conditions $( \f v_0, \f d_0)\in \Ha \times \He$  shall be fulfilled in the weak sense and the boundary conditions$ (\f v , \f d)= ( \f 0 , \f d_0)  $ on $ \partial \Omega$ in the sense of the trace.
We remark that the trace is well defined for the function $\f d \in L^\infty(0,T;\He)$, which is the expected value of the  measure $\nu$. 

\end{itemize}

The dual pairings is defined by
\begin{align*}
\ll{\nu_t, f } :={}& 
 \int_{\Omega} \int_{\R^{3\times 3} } f(\f x, t,\f d(\f x, t), \f S)  \nu_{(\f x, t)} ( \de \f S)\de \f x 
\end{align*}
for $f \in L^1(0,T;\C(\Omega)\times \C(\R^{d\times d}))$ such that $ \lim _{|\f S |\to \infty}\frac{|f(x,t, \f S )}{1+|\f S|^2}< \infty$.
For the defect measure, we use the usual dual pairing for continuous functions 
\begin{align*}
\langle \mu_t , \f A \rangle = \int_{\ov\Omega} \f A ( \f x ) : \de \mu_t(
x) \quad \text{and}\quad \langle \kappa_t , \f a \rangle = \int_{\ov\Omega} \f a ( \f x )\cdot   \de \kappa_t(
x)
\end{align*}
for all $ \f A \in \C(\ov \Omega  ; \R^{d\times d})$ and $ \f a \in \C(\ov \Omega ; \R^d)$, where $\kappa$ is such that $ \langle \kappa _t,\f d\rangle \geq 0$. Note that this dual product is well-defined by the embedding $ \mathcal{M}(\Omega;\R^d) $ into $( L^\infty(\Omega; \R^d) )^*$. Meaning that there exists an extension of $ \kappa$ to a linear mapping on $L^\infty(\Omega; \R^d)$ fulfilling this property. 
\end{definition}

\begin{remark}
The Definition~\ref{def:meas} differs significantly form the definitin of measure-valued solutions in~\cite{meas}. We simplified the definition here for the sake of readability into this more modern version. The concentrations of the approximating sequence that are described by an concentration and a concentration-angle measure in~\cite{meas}, are only represented by a defect measure here. There is an additional defect measure in~\cite{meas}, which we include here in the defect measure $\mu$. 
Here we tried to simplify the notion of these defect measures in order to keep the expression of the defects readable. 
\end{remark}

We define $ \E : \Ha \times \He \to [0,\infty)$ via $ \E( \f v, \f d ):= \int_\Omega\frac{1}{2}| \f v |^2 + F(\f d , \nabla \f d ) \de x $.
\begin{definition}[Energy-variational solution] \label{def:envarlc}
A triple $ ( \f v , \f d , E) $ is called an energy-variational solution to~\eqref{system_a_to_reformulate} with initial values~$(\f v _0, \f d_0)$, if 
\begin{align*}
\f v &\in L^\infty(0,T;\Ha)\cap  L^2(0,T;\Hsig)
\cap W^{1,2}(0,T; ( \f W^{2,p}_{0,\sigma})^*) \text{ for }p>3,
\\
 \f d& \in L^\infty(0,T;\He)\cap   W^{1,2}  (0,T;  \f  L^{\nicefrac{3}{2}} 
 )
 \\
 E & \in \BV
 \end{align*}
with $ E \geq\E(\f v, \f d) $ for a.e.~$t\in (0,T)$ and there exists a $\f q\in L^\infty _{w^*} (0,T; H^{-1}(\Omega)\cap \mathcal{M}(\Omega))$ such that it holds  
\begin{multline}
\left  [
E - \int_\Omega \f v \cdot \f\varphi +  \f d \cdot  \f \zeta \de x  
\right ]\Big |_s^t + \int_s^t \int_\Omega \f v \cdot \t \f\varphi +  \f d \cdot  \t \f \zeta  +( \f v \otimes \f v) : \nabla \f \varphi + \nabla \f d ^T \nabla \f d ( I + k \f d \otimes \f d ) : \nabla \f \varphi    \de x 
\\+ \int_s^t\left [
(\mu_1+\lambda^2)\|\f d\cdot \sy{v}\f d\|_{L^2}^2  +   \mu_4 \|\sy{v}\|_{\Le}^2+( \mu_5+\mu_6-\lambda^2)\|\sy{v}\f d\|_{\Le}^2  \right ]\de \tau 
\\
+ \int_s^t   \|\f d \times \f q\|_{\Le}^2-\int_\Omega \f T^L : \nabla \f \varphi \de x + \langle \f g , \f \varphi - \f v \rangle  \de \tau 
\\
- \int_s^t \int_\Omega \left[  ( \f v \cdot \nabla )\f d- ( \nabla \f v )_{\skw} \f d  + ( I -\f d \otimes \f d) ( \lambda ( \nabla \f v )_{\sym} \f d + \f q) \right ] \cdot \f \zeta \de x \de \tau 
\\
- \int_s^t \langle \f q , \f \psi\rangle -\int_\Omega
k   (\nabla \f d^T \nabla \f d) \f d \cdot \f \psi +  \nabla \f d ( I + k \f d \otimes \f d ) : \nabla \f \psi \de x \de \tau 
\\
+ \int_s^t2 \left \| (\nabla \f \varphi + k \f d \otimes ( \nabla \f \varphi \f d + \f \psi ))_{\sym,-} \right \| _{L^\infty_{\f d} (\Omega;\R^{d\times d})} \left [\E(\f v ,\f d ) - E \right ]\de \tau \leq 0 \,.\label{envarineqlc}
\end{multline}
for all $ \f \varphi\in \C^1  ([0,T]; \C^1(\Omega ; \R^d))$ with $\di \f \varphi =0 $ in $\Omega \times (0,T)$, $\f \zeta\in \C^1([0,T];L^{3}(\Omega))$, $\f \psi  \in L^1(0,T;\C^1(\Omega; \R^d)) $, and for a.e. $0\leq s< t\leq T$ including $ s=0 $ with $ ( \f v (0), \f d (0))= ( \f v_0, \f d _0)$. 
\end{definition}
Here $ L^\infty_{\f d} (\Omega;\R^{d\times d})$ is the space of measurable essentially bounded functions from $ \Omega $ to $ \R^{d\times d}$, where we equip $ \R^{d\times d}$ with a norm $ | \cdot |_{\f d}$ that is equivalent to the usual spectral norm for $ \f d \in L^\infty ( \Omega ; \Se^{d-1})$ via 
\begin{equation}
\label{dnorm}
| \f A | ^2_{\f d} := \max\left \{ \inf _{\lambda \in \R} | \f A - \lambda \f d \otimes \f d |_2^2 , \frac{1}{k+1	} \f d \cdot \f A \f d\right \} \,,
\end{equation}
for $ \f A \in \R^{d\times d}$, where $|\cdot|_2$ denotes the usual spectral norm. Via some elementary calculations, we can identify the conjugate norm with respect to the Frobenius product, which is given by 
$ | \f A |_{\f d^*}^2 = \tr (\f A) + k \f d \cdot \f A \f d $ at least for symmetric positive semi-definite matrices $\f A\in \R^{d\times d}_{\sym,+}$.

\begin{definition}[Dissipative weak solutions]\label{def:dissweak}
A triple $ ( \f v , \f d , E) $ is called an dissipative-weak solutions with initial values~$(\f v _0, \f d_0)$, if 
\begin{align*}
\f v &\in L^\infty(0,T;\Ha)\cap  L^2(0,T;\Hsig)
\cap W^{1,2}(0,T; ( \f W^{2,p}_{0,\sigma})^*) \text{ for }p>3,
\\
 \f d& \in L^\infty(0,T;\He)\cap   W^{1,2}  (0,T;  \f  L^{\nicefrac{3}{2}} 
 )
 \\
 E & \in \BV
 \end{align*}
and there exists a defect measure $ \mathfrak{R}\in L^\infty_{w^*}(0,T;(L^\infty(\Omega;\R^{d\times d}_{\sym}))^*)$ such that $ \langle \mathfrak{R}, \f a \otimes \f a \rangle \geq 0 $ for all $\f a \in L^\infty(\Omega;\R^d)$ and 
\begin{subequations}\label{dissweak}
\begin{align}
\begin{split}
\intte{ \int_\Omega (\t \f v  +(\f v\cdot \nabla) \f v)\cdot  \f \varphi  - \nabla \f d ^T \nabla \f d  ( I + k\f d \otimes \f d ):\nabla \f \varphi \de x  }  &
\\- \intte{  \left  \langle \mathfrak R ,  \nabla \f \varphi( I + k\f d \otimes \f d ) \right \rangle_{L^\infty(\Omega;\R^{d\times d}_{\sym})}  -  \int_\Omega \f T^L : \nabla \f \varphi\de x  } &{}={}\intte{ \left \langle \f g ,\f \varphi\right \rangle }\, ,
\end{split}\label{eq:velodissweak}
\intertext{for all $ \varphi\in L^2 (0,T; \C^1(\Omega ; \R^d))$ with $\di \f \varphi =0 $ in $\Omega \times (0,T)$  as well as}
\partial_t \f d + (\f v \cdot \nabla) \f d - (\nabla \f v)_{skw} \f d +(I - \f d \otimes \f d ) ( \lambda (\nabla \f v)_{sym} \f d + \f q) &{}=0\label{eq:dirdissweak} 
\intertext{ almost everywhere in $\Omega \times (0,T)$ with }
\begin{split}
 \intte{ \int_\Omega k( \nabla \f d ^T \nabla \f d  \f d )\cdot\f \psi \de x   
+ \int_\Omega (\nabla \f d + k \nabla\f d \f d \otimes \f d) :  \nabla \f \psi \de x}& \\ +k\intte{ \left \langle  \mathfrak R ,  \f d \otimes \f \psi \right \rangle_{L^\infty(\Omega; \R^{d\times d}_{\sym})}  }
&{}={}\intte{\left \langle\f q , \f \psi\right \rangle}  \,, 
 \end{split}
\label{eq:qdissweak}
\end{align}%
for all  $\f \zeta \in L^1(0,T;\C^1(\Omega; \R^d)) $. 
Moreover, it holds that 
\begin{align}\label{eninls}
\begin{split}
E \Big |_s^t + \int_s^t\left [
(\mu_1+\lambda^2)\|\f d\cdot \sy{v}\f d\|_{L^2}^2  +   \mu_4 \|\sy{v}\|_{\Le}^2\right ]\de \tau 
\\+ \int_s^t\left [
( \mu_5+\mu_6-\lambda^2)\|\sy{v}\f d\|_{\Le}^2  +  \|\f d \times \f q\|_{\Le}^2\right ]\de \tau &
\leq \int_s^t \langle \f g ,  \f v \rangle \de \tau 
\end{split}
\end{align}
for almost every $ 0\leq s <t \leq T$ with 
\begin{align}\label{enindeissweak}
\E( \f v(t) , \f d(t) ) + \frac{1}{2}\langle \mathfrak{R(t)}, I + k \f d \otimes \f d \rangle_{L^\infty(\Omega;\R^{d\times d }_{\sym})} \leq E(t) \quad \text{for a.e.~}t\in (0,T).
\end{align}
\end{subequations}

\end{definition}
\begin{theorem}\label{thm:lcequivalence}
The tuble  $ (\f v ,\f d , E)$ is an energy-variatonal solution according to Definition~\ref{def:envarlc} if and only if it is a dissipative weak solution according to Definition~\ref{def:dissweak}. 

Let $ (\f v ,\f d , E)$ be an energy-variatonal solution according to Definition~\ref{def:envarlc} then there exists a measure-valued solution in the sense of~Definition~\ref{def:meas}. 
\end{theorem}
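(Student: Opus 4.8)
The plan is to establish the stated equivalence of Definitions~\ref{def:envarlc} and~\ref{def:dissweak} first, and then to manufacture a measure-valued solution in the sense of Definition~\ref{def:meas} out of the dissipative weak formulation. The scheme mirrors Sections~\ref{sec:1}--\ref{sec:poly}; the genuinely new feature is the anisotropic norm $|\cdot|_{\f d}$ from~\eqref{dnorm}, which forces the defect to live in $(L^\infty(\Omega;\R^{d\times d}_{\sym}))^*$ rather than in a space of Radon measures. To pass from an energy-variational to a dissipative weak solution I first isolate the linear relations by the scaling trick used throughout: replacing $\f\zeta$ by $\alpha^{-1}\f\zeta$ in~\eqref{envarineqlc}, multiplying by $\alpha$ and letting $\alpha\searrow0$, every term except the linear $\f\zeta$-contributions drops out (the weight does not depend on $\f\zeta$), and testing with $\pm\f\zeta$ turns the surviving inequality into the director equation~\eqref{eq:dirdissweak}.

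The weight couples $\f\varphi$ and $\f\psi$ only through the single expression $\f A:=\nabla\f\varphi+k\f d\otimes(\nabla\f\varphi\f d+\f\psi)$, which I exploit via Lemma~\ref{lem:fact}. I let $\f l$ be the linear functional gathering the momentum and director terms of~\eqref{envarineqlc}, let $A\colon(\f\varphi,\f\psi)\mapsto\f A$ be the associated linear map into $L^1(0,T;L^\infty(\Omega;\R^{d\times d}_{\sym}))$, and take the Minkowski functional $\mathfrak p(\f A):=\int_0^T2\|(\f A)_{\sym,-}\|_{L^\infty_{\f d}}[E-\E(\f v,\f d)]\,\de t$; inequality~\eqref{envarineqlc} is precisely the domination $\langle\f l,\cdot\rangle\le\mathfrak p\circ A$. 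Lemma~\ref{lem:fact} then produces $\mathfrak R\in(L^\infty(\Omega;\R^{d\times d}_{\sym}))^*$ with $\langle\mathfrak R,A(\cdot)\rangle=\langle\f l,\cdot\rangle$ and $\langle\mathfrak R,\cdot\rangle\le\mathfrak p$. As in Section~\ref{sec:1}, testing against rank-one matrices $\f a\otimes\f a$, on which $\mathfrak p$ vanishes, fixes the sign and yields the positivity $\langle\mathfrak R,\f a\otimes\f a\rangle\ge0$ demanded by Definition~\ref{def:dissweak}; splitting the action of $\mathfrak R$ over the momentum piece $\nabla\f\varphi(I+k\f d\otimes\f d)$ and the director piece $k\f d\otimes\f\psi$ then reproduces~\eqref{eq:velodissweak} and~\eqref{eq:qdissweak}.

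For the converse I subtract~\eqref{eq:velodissweak} tested with $\f\varphi$ and~\eqref{eq:qdissweak} tested with $\f\psi$ from the energy inequality~\eqref{eninls}. Using the symmetry of $\mathfrak R$ and the algebraic identity $(k\nabla\f\varphi\f d\otimes\f d)_{\sym}+(k\f d\otimes\f\psi)_{\sym}=(k\f d\otimes(\nabla\f\varphi\f d+\f\psi))_{\sym}$, the two defect contributions fuse into $-\langle\mathfrak R,(\f A)_{\sym}\rangle$. The crucial step is the pointwise anisotropic duality $-R:\f A\le|(\f A)_{\sym,-}|_{\f d}\,\langle R,I+k\f d\otimes\f d\rangle$ valid for $R\in\R^{d\times d}_{\sym,+}$, where $\langle R,I+k\f d\otimes\f d\rangle=\tr R+k\,\f d\cdot R\f d$ is the conjugate norm identified after~\eqref{dnorm}; integrated against $\mathfrak R$ it gives $-\langle\mathfrak R,(\f A)_{\sym}\rangle\le\|(\f A)_{\sym,-}\|_{L^\infty_{\f d}}\langle\mathfrak R,I+k\f d\otimes\f d\rangle$. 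Inserting $\langle\mathfrak R,I+k\f d\otimes\f d\rangle\le2[E-\E(\f v,\f d)]$ from~\eqref{enindeissweak} produces exactly the weight term of~\eqref{envarineqlc}, while all remaining terms match term by term.

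It remains to build a measure-valued solution, for which I start from the dissipative weak form with its defect $\mathfrak R$. Since $\f S\mapsto\f S^T\f S$ has the same quadratic growth as the energy density $F$, Lemma~\ref{lem:Young} does not apply and concentrations must be handled separately. I would split $\mathfrak R$ into an absolutely continuous density $R_{\mathrm{ac}}\in\R^{d\times d}_{\sym,+}$ and a concentration remainder, build $\nu_{(x,t)}\in\mathcal P(\R^{d\times d})$ with barycenter $\nabla\f d$ and second moment $\nabla\f d^T\nabla\f d+R_{\mathrm{ac}}$ by eigendecomposing $R_{\mathrm{ac}}$ and placing symmetric Dirac masses, exactly as the sphere-valued measure was constructed in the proof of Section~\ref{sec:1}, and set $\mu:=\mathfrak R_{\mathrm{sing}}(I+k\f d\otimes\f d)$ and $\kappa:=k\,\mathfrak R_{\mathrm{sing}}\f d$ with $\langle\kappa,\f d\rangle\ge0$ by positivity. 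These identifications convert~\eqref{eq:velodissweak} and~\eqref{eq:qdissweak} into~\eqref{eq:velo} and~\eqref{eq:q} and, via $\langle\nu,F\rangle=F(\f d,\nabla\f d)+\tfrac12\langle R_{\mathrm{ac}},I+k\f d\otimes\f d\rangle$ together with~\eqref{eninls}--\eqref{enindeissweak}, yield both the barycenter condition~\eqref{identify} and the energy inequality~\eqref{energyin}. The step I expect to be the main obstacle is precisely this last conversion: the Hahn--Banach defect $\mathfrak R$ lies in $(L^\infty)^*$ and need not be countably additive, whereas Definition~\ref{def:meas} demands genuine nonnegative Radon measures $\mu,\kappa$; isolating the countably additive (Yosida--Hewitt) part as a Radon measure and showing that the purely finitely additive remainder can be absorbed, together with the pointwise verification of the anisotropic duality underlying every estimate, are the places that require genuine care, the rest being bookkeeping parallel to the earlier sections.
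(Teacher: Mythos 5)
Your proposal follows essentially the same route as the paper's proof: the scaling trick to extract the linear relations, Lemma~\ref{lem:fact} applied to the map $A(\f \varphi,\f \psi)=(\nabla \f \varphi+k\f d\otimes(\nabla\f \varphi\, \f d+\f \psi))_{\sym}$ to produce the defect $\mathfrak R\in(L^\infty(\Omega;\R^{d\times d}_{\sym}))^*$, the anisotropic duality $\|\mathfrak R\|_{(L^\infty_{\f d})^*}=\langle\mathfrak R, I+k\f d\otimes\f d\rangle\leq 2[E-\E(\f v,\f d)]$ for the converse, and the Yosida--Hewitt plus Radon--Nikodym decomposition followed by a moment-matching probability measure and the singular measures $\mu,\kappa$ for the measure-valued solution. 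The only immaterial deviation is that you build $\nu$ from symmetric Dirac masses via an eigendecomposition of the absolutely continuous density, whereas the paper uses a Gaussian --- a choice the paper itself points out is non-unique.
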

\begin{proof}[Proof of Theorem~\ref{thm:lcequivalence}]
Let $(\f v, \f d ,E)$ be an energy-variational solution in the sense of~Definition~\eqref{def:envarlc}. 
Choosing $ ( \f \varphi ,\f \psi, \f \zeta ) = ( 0 , 0 ,0)$ in~\eqref{envarineqlc} implies the inequality~\eqref{eninls}. 

Choosing $ ( \f \varphi ,\f \psi, \f \zeta ) = ( 0 , \frac{1}{\alpha }\f \xi  ,0)$ for $\alpha >0$ and $\f \xi \in L^2(0,T;L^{3}(\Omega))$ and multiplying the resulting inequality by $\alpha $ as well as considering $\alpha \searrow 0$, we infer the equation~\eqref{eq:dirdissweak} tested with~$\f \xi$, which is equivalent to~\eqref{eq:dirdissweak}. 

Choosing $ ( \f \varphi ,\f \psi, \f \zeta ) = (  \frac{1}{\alpha }\tilde{\f\varphi}   ,0, \frac{1}{\alpha }\tilde{\f \zeta}  )$ for $\alpha >0$ and 
$\tilde{\f \varphi}\in L^2 (0,T; \C^1(\Omega ; \R^d))$ with $\di \tilde{\f \varphi} =0 $ in $\Omega \times (0,T)$ as well as $\tilde{\f \zeta} \in L^1(0,T;\C^1(\Omega; \R^d)) $, and multiplying the resulting inequality by $\alpha $ as well as considering $\alpha \searrow 0$, we infer for $t=T$ and $s=0$, 
\begin{align}\label{lcineqhb}
\begin{split}
&\langle \f l , (\tilde{\f \varphi}, \tilde{\f \zeta} ) \rangle _{L^2 (0,T; \C^1(\Omega ; \R^d)) \times L^1(0,T;\C^1(\Omega; \R^d)) } 
\\
&{}= 
 - \int_\Omega \f v \cdot \tilde{\f \varphi} \de x \Big|_0^T + \int_0^T \int_\Omega \f v \cdot \t \tilde{\f \varphi} + ( \f v \otimes \f v + \nabla \f d ^T \nabla \f d (I+k\f d \otimes \f d ) - \f T^L):\nabla \tilde{\f \varphi} \de x + \langle \f g , \tilde{\f \varphi}\rangle  \de t 
\\
&\quad+ \int_0^T \int_\Omega k (\nabla \f d ^T\nabla \f d ) \f d \cdot \tilde{\f \zeta} + \nabla \f d ( I + k \f d \otimes \f d ) : \nabla \tilde{\f \zeta} \de x - \langle \f q , \tilde{\f \zeta}\rangle \de t 
\\
&{}\leq {}  \int_0^T \left \| (\nabla \tilde{\f \varphi} + k \f d \otimes ( \nabla \tilde{\f \varphi} \f d + \tilde{\f \zeta} ))_{\sym,-} \right \| _{L^\infty _{\f d}(\Omega;\R^{d\times d})} \left [E - \E(\f v ,\f d )  \right ]\de t 
\\
&= \int_0^T \| (A ( \tilde{\f \varphi}, \tilde{\f \zeta}))_{-} \|_{L^\infty_{\f d}(\Omega; \R^{d\times d})}\de t
\end{split}
\end{align}
with $ \f l \in \mathbb X^*$, where $$\mathbb X := (L^2(0,T; \C^1(\Omega ; \R^d)) \times L^1(0,T;\C^1(\Omega; \R^d)) )^*\quad \text{and}\quad  \mathbb Y =  L^1(0,T; L^\infty(\Omega; \R^{d\times d}_{\sym}))$$  such that  $$ A : \mathbb X \to \mathbb Y\quad\text{and}\quad A  ( \tilde{\f \varphi}, \tilde{\f \zeta}) =
 (\nabla \tilde{\f \varphi} + k \f d \otimes ( \nabla \tilde{\f \varphi} \f d + \tilde{\f \zeta} ))_{\sym}\,.$$
From~\eqref{lcineqhb}, we infer that $ \langle \f l ,x \rangle \leq p(A(x))$ for all $x\in \mathbb X$ with $$ p : \mathbb Y \to [0,\infty) \quad \text {via}\quad p(y)=\int_0^T 2\| (y)_{\sym,-}\|_{L^\infty_{\f d}(\Omega;\R^{d\times  d}_{\sym})}[E-\E(\f v, \f d)]\,.$$ According to Lemma~\ref{lem:fact}, we infer that there exists $\mathfrak{R} \in \mathbb Y^*$ such that $ \langle \f l ,x \rangle _{\mathbb X} = -\langle \mathfrak{R} , A(x)\rangle_{\mathbb{Y}} $ for all $ x\in \mathbb X$ and
$-\int_0^T \langle \mathfrak{R} , y\rangle_{\mathbb{Y}} \de t  \leq 
\int_0^T 2 \|(y)_{\sym,-}\|_{L^\infty(\Omega; \R^{d\times d})}$ for all $y\in\mathbb{Y}$. We can identify $ \mathbb Y^* = L^\infty_{w^*} (0,T; (L^\infty(\Omega; \R^{d\times d} _{\sym}))^*)$. 

 Choosing $ y = -\frac{1}{2}\phi(t) (I+ k \f d \otimes \f d )$ with $\phi \in \C([0,T])$, where $ \phi \geq 0$, we find 
 \[  
\int_0^T \phi  \frac{1}{2} \langle \mathfrak R, I +k \f d \otimes \f d \rangle_{\mathbb Y}\de t  \leq 
 \int_0^T \phi [ E - \E(\f v, \f d )] \de t \quad \text{for all }\phi \in \C([0,T]) \text{ with }\phi \geq 0\,.
 \]
This implies~\eqref{enindeissweak}. 

We note that we equipped $\R^{d\times d}_{\sym}$ with the special norm $|\cdot |_{\f d}$ defined above for which we observe   $ \| I + k\f d \otimes \f d  \| _{L^\infty_{\f d}(\Omega;\R^{d\times d}_{\sym})}=1$. 
Moreover, we observe that $ -\int_0^T \langle \mathfrak{R} , y\rangle_{\mathbb{Y}} \de t  \leq  0
$ for all $ y \in L^1(0,T; L^\infty(\Omega; \R^{d\times d}_{\sym,+}))$, which is nothing else than~$ \mathfrak{R}\in  L^\infty_{w^*} (0,T; (L^\infty(\Omega; \R^{d\times d} _{\sym_+}))^*)$.

Since an element $x$ of the space $\mathbb X$ has two components such that $ x=(\f \varphi , \f \zeta)$, 
the equality $ \langle \f l ,(\f \varphi , \f 0)  \rangle _{\mathbb X} = \langle \mathfrak{R} , (\f \varphi , \f 0)\rangle_{\mathbb{Y}}$ gives exactly~\eqref{eq:velodissweak} and the equality  $ \langle \f l ,(\f 0 ,\f \zeta)  \rangle _{\mathbb X} = \langle \mathfrak{R} , (\f 0 , \f \zeta)\rangle_{\mathbb{Y}}$ gives exactly~\eqref{eq:qdissweak}. 
This proves the first direction of the equivalence.

Now let $(\f v, \f d , E)$ be a dissipative weak solution in the sense of
Definition~\ref{def:dissweak}. Choosing $\f \varphi = \chi_{[s,t]}\tilde{\f\varphi}$ in~\eqref{eq:velodissweak} and adding~\eqref{eq:dirdissweak} multiplied by $\chi_{[s,t]}\f \psi$ and integrated in space and time, we infer after an integration by parts in time, that
\begin{multline}
\left  [
- \int_\Omega \f v \cdot \f\varphi +  \f d : \f \psi \de x  
\right ]\Bigg |_s^t \\+ \int_s^t \int_\Omega \f v \cdot \t \f\varphi +  \f d \cdot  \t \f \psi  +( \f v \otimes \f v) : \nabla \f \varphi + \nabla \f d ^T \nabla \f d ( I + k \f d \otimes \f d ) : \nabla \f \varphi    \de x \de \tau \\
- \int_s^t \int_\Omega \f T^L : \nabla \f \varphi \de x - \langle \f g , \f \varphi - \f v \rangle\de \tau 
+ \int_s^t \left \langle \mathfrak R,  (\nabla \f \varphi + k \f d \otimes ( \nabla \f \varphi \f d + \f \zeta ))_{\sym,-} \right \rangle _{L^\infty (\Omega;\R^{d\times d})}\de \tau
\\
 - \int_s^t\int_\Omega \left[  ( \f v \cdot \nabla )\f d- ( \nabla \f v )_{\skw} \f d  + ( I -\f d \otimes \f d) ( \lambda ( \nabla \f v )_{\sym} \f d + \f q) \right ] \cdot \f \psi \de x \de \tau 
 \leq 0 \,.\label{addedvanddir}
\end{multline}
 Now we subtract this from~\eqref{eninls} and also subtract~\eqref{eq:qdissweak} integrated only over the interval $[s,t]$, we infer
 \begin{multline*}
\left  [
E - \int_\Omega \f v \cdot \f\varphi +  \f d : \f \psi \de x  
\right ]\Bigg |_s^t \\+ \int_s^t \int_\Omega \f v \cdot \t \f\varphi +  \f d \cdot  \t \f \psi  +( \f v \otimes \f v) : \nabla \f \varphi + \nabla \f d ^T \nabla \f d ( I + k \f d \otimes \f d ) : \nabla \f \varphi    \de x 
\\+ \int_s^t\left [
(\mu_1+\lambda^2)\|\f d\cdot \sy{v}\f d\|_{L^2}^2  +   \mu_4 \|\sy{v}\|_{\Le}^2+( \mu_5+\mu_6-\lambda^2)\|\sy{v}\f d\|_{\Le}^2  \right ]\de \tau 
\\
+ \int_s^t \int_\Omega   \|\f d \times \f q\|_{\Le}^2 - \f T^L : \nabla \f \varphi \de x + \langle \f g , \f \varphi - \f v \rangle \de \tau 
\\
- \int_s^t \int_\Omega  \int_\Omega \left[  ( \f v \cdot \nabla )\f d- ( \nabla \f v )_{\skw} \f d  + ( I -\f d \otimes \f d) ( \lambda ( \nabla \f v )_{\sym} \f d + \f q) \right ] \cdot \f \psi \de x \de \tau 
\\
- \int_s^t \langle \f q , \f \zeta\rangle -\int_\Omega
k   (\nabla \f d^T \nabla \f d) \f d \cdot \f \zeta +  \nabla \f d ( I + k \f d \otimes \f d ) : \nabla \f \zeta \de x \de \tau 
\\
+ \int_s^t \left \langle \mathfrak R,  (\nabla \f \varphi + k \f d \otimes ( \nabla \f \varphi \f d + \f \zeta ))_{\sym,-} \right \rangle _{L^\infty (\Omega;\R^{d\times d})}\de \tau \leq 0 \,.
\end{multline*}
for all $ \f \varphi\in \C  ([0,T]; \C^1(\Omega ; \R^d))$ with $\di \f \varphi =0 $ in $\Omega \times (0,T)$, $\f \psi\in \C([0,T];L^{3/2}(\Omega))$, $\f \zeta \in L^1(0,T;\C^1(\Omega; \R^d)) $. The estimate~\eqref{enindeissweak} 
via 
\begin{align*}
&\left \langle \mathfrak R,  (\nabla \f \varphi + k \f d \otimes ( \nabla \f \varphi \f d + \f \zeta ))_{\sym,-} \right \rangle _{L^\infty (\Omega;\R^{d\times d})}\\&\qquad \leq  
\| \mathfrak R \|_{(L^\infty_{\f d} (\Omega;\R^{d\times d} _{\sym}))^*} 
\left \| (\nabla \f \varphi + k \f d \otimes ( \nabla \f \varphi \f d + \f \zeta ))_{\sym,-} \right \| _{L^\infty (\Omega)} 
\\
&\qquad\leq  \left [\E(\f v ,\f d ) - E \right ]\left \| (\nabla \f \varphi + k \f d \otimes ( \nabla \f \varphi \f d + \f \zeta ))_{\sym,-} \right \| _{L^\infty (\Omega)} 
\end{align*}
now implies~\eqref{envarineqlc}, where we used that $ \| \mathfrak R \|_{(L^\infty_{\f d} (\Omega;\R^{d\times d} _{\sym}))^*} =\langle \mathfrak R , I+ k \f d \otimes \f d  \rangle _{L^\infty (\Omega;\R^{d\times d} _{\sym})}$. 
This follows from the derivation of the dual norm of $| \cdot|_{\f d}$ below~\eqref{dnorm}
This proves the second direction of the equivalence. 
  
  Now we prove that the existence of a dissipative solution in the sense of Definition~\ref{def:dissweak} implies the existence of a measure-valued solution in the sense of Definition~\ref{def:meas}.
For the element $ \mathfrak{R}\in (L^\infty(\Omega;\R^{d\times d}_{\sym}))^*  $ such that $ \langle \mathfrak{R}, \f a \otimes \f a \rangle \geq 0 $ for all $\f a \in L^\infty(\Omega;\R^d)$, we apply the Yosida-Hewitt Decomposition~\cite{Yoshida} in order to decompose $ \mathfrak R$ into a Radon measure $ \mathrm{R}^{\mathrm{reg}}$ and a singular set function $ \mathrm{R}^{\mathrm{sing}}$ such that $ \mathfrak{R}= \mathrm{R}^{\mathrm{reg}}+\mathrm{R}^{\mathrm{sing}}$  with $ \langle\mathrm{R}^{\mathrm{sing}}, \f \varphi\rangle = 0 $ for all $\f \varphi \in \C(\Omega; \R^{d\times d}_{\sym})$. Moreover, applying the Radon--Nikkodym derivative to the regular part, we find that
$ \\de mathrm{R}^{\mathrm{reg}} (x) = B(x) \de x  + \de \mathfrak{R}^d(x) $ with $B\in L^1(\Omega; \R^{d\times d}_{\sym,+})$, and $\de x$ denoting the Lebesgue measure and $\mathfrak{R}^d$ a singular measure. 

For the part that is absolutely continuous with respect to the Lebesgue-measure, we observe 
that there exists a measure $\nu$ such that 
\begin{align}\label{expvari}
\begin{split}
1 = \int_{\R^{d\times d}} \de \nu_{x,t}  \,, \quad \nabla \f d (x,t) = \int_{\R^d\times d} \f S \nu_{x,t} (\de \f S) \,, \\ \nabla \f d (x,t)^T \nabla \f d (x,t) + B (x,t) = \int_{\R^{d\times d}} \f S^T \f S \nu_{x,t}(\de \f S)\,.
\end{split}
\end{align}
  There are many possibilities to construct such a measure, a similar way as in~Lemma~\ref{lem:Young} would be possible. We can also construct a normal distribution via 
   \[
\nu_{x,t}(\f S ) :=\frac{1}{(2 \pi)^{d^2/2} \det( B{(x,t)})^{d/2}} \exp\left( -\frac{1}{2} (\f S - \f A (x,t)):  ( B{(x,t)})^{-1} (\f S - \f A (x,t)) \right) \de \f S  \,.
\]
  This is one possible choice fulfilling~\eqref{expvari} but there are many others. 
  Moreover, we observe that $2 \langle \nu , F(\f d , \f S) \rangle = \langle \nu , \f S^T \f S :(I + k\f d \otimes\f d )\rangle = (\nabla \f d ^T \nabla \f d  + B) :( I + k \f d \otimes \f d )$.

We consider the linear mapping $ \f \zeta \mapsto k\left \langle \mathfrak R^{\mathrm{sig}}+\mathfrak{R}^d , \f d \otimes \f \zeta \right \rangle_{L^\infty(\Omega;\R^{d\times d}_{\sym})} $ it is a linear continuous mapping on $ L^1(0,T;\C(\Omega;\R^d))$ such that there exists a measure $ \kappa \in L^\infty_{w^*}(0,T;\mathcal{M}(\Omega;\R^d))$ such that 
\begin{equation}
\langle \mathfrak R^{\mathrm{sig}}+\mathfrak{R}^d, \f d \otimes \f \zeta\rangle _{L^\infty(\Omega;\R^{d\times d}_{\sym})} = \int_{\Omega}\f \zeta \cdot\kappa (\de x)\quad \text{for all }\f \zeta \in  L^1(0,T;\C(\Omega;\R^d))\,.
\label{kappa}
\end{equation}
Furthermore, the mapping $ A \mapsto  \left \langle \mathfrak R^{\mathrm{sig}}+\mathfrak{R}^d ,(I + k  \f d \otimes \f d) A  \right \rangle_{L^\infty(\Omega;\R^{d\times d}_{\sym})} $ it is a linear continuous mapping on $ L^1(0,T;\C(\Omega;\R^{d\times d}))$ such that there exists a measure $ \mu \in L^\infty_{w^*}(0,T;\mathcal{M}(\Omega;\R^{d\times d}))$ such that 
\begin{equation}
\langle \mathfrak R^{\mathrm{sig}}+\mathfrak{R}^d,(I + k  \f d \otimes \f d) A \rangle _{L^\infty(\Omega;\R^{d\times d}_{\sym})} = \int_{\Omega}A : \mu (\de x)\quad \text{for all }A \in  L^1(0,T;\C(\Omega;\R^{d\times d}))\,.
\label{mu}
\end{equation}
  We note that these measures are singular, \textit{i.e.,} concentrated on a set of Lebesgue measure-zero in $\Omega$. 
  To see this, we refer to~\cite[Thm.~III, 7.8]{dunfordschwarz}, since $\mathfrak R^{\mathrm{sig}}$ is a purely finitely additive set function and $\mathfrak R^d$ is concentrated on a set of Lebesgue-measure zero. 
  Using this definitions, we infer from~\eqref{eq:velodissweak} the formulation~\eqref{eq:velo} and from~\eqref{eq:qdissweak} the formulation~\eqref{eq:q}, respectively. 

  Extending the linear function $\kappa $ and $\mu$ to functionals on $L^\infty (\Omega)$ again, we infer
  that
\begin{align*}
   \langle \kappa , \f d \rangle &= \langle \mathfrak R^{\mathrm{sig}}+\mathfrak{R}^d, \f d \otimes \f d\rangle _{L^\infty(\Omega;\R^{d\times d}_{\sym})} \geq 0 \text{ and }\\  \langle \mu , I \rangle & =\langle \mathfrak R^{\mathrm{sig}}+\mathfrak{R}^d,(I + k  \f d \otimes \f d)  \rangle _{L^\infty(\Omega;\R^{d\times d}_{\sym})}   \geq 0\,.
\end{align*}  
  This especially implies  that there exists a $ c>0$ such that 
  \begin{align*}
  \langle \kappa, \f d \rangle + \langle \mu , I \rangle  \leq   c \langle \mathfrak R^{\mathrm{sig}}+\mathfrak{R}^d,(I + k  \f d \otimes \f d)  \rangle _{L^\infty(\Omega;\R^{d\times d}_{\sym})} = c \| \mathfrak R^{\mathrm{sig}}+\mathfrak{R}^d \|_{L^\infty_{\f d}(\Omega;\R^{d\times d}_{\sym})}\,. 
  \end{align*}
  From this, we infer 
  \begin{align*}
  &\E( \f v , \f d ) + \frac{1}{2}\langle \mathfrak{R}, I + k \f d \otimes \f d \rangle _{L^\infty(\Omega;\R^{d\times d }_{\sym})} \\&\quad =  \frac{1}{2} \int_{\Omega} | \f v |^2 +  \langle \nu , F(\f d , \f S)\rangle\de x  +\langle \mathfrak R^{\mathrm{sig}}+\mathfrak{R}^d,(I + k  \f d \otimes \f d)  \rangle _{L^\infty(\Omega;\R^{d\times d}_{\sym})} 
  \\& \quad \geq  \frac{1}{2} \int_{\Omega} | \f v |^2 +  \langle \nu , F(\f d , \f S)\rangle\de x + \frac{1}{c} \left (\langle \kappa, \f d \rangle + \langle \mu , I \rangle \right ) \,,
  \end{align*}
  which implies that~\eqref{energyin} holds true such that the assertion is proven. 
  \end{proof}
\section*{Acknowledgment} 
  The author acknowledges funding by the Deutsche Forschungsgemeinschaft (DFG, German Research Foundation) within SPP 2410 Hyperbolic Balance Laws in Fluid Mechanics: Complexity, Scales, Randomness (CoScaRa), project number 525941602.

  \end{document}